\newtheorem{theorem}{Theorem}
\newtheorem{lemma}[theorem]{Lemma}
\numberwithin{equation}{section}
\def\ve{\varepsilon}
\newcommand{\vr}{\varepsilon}
\newcommand{\ds}{\displaystyle}
\title{Numerical approximations to the scaled first derivatives  of a two parameter singularly perturbed problem}
\author{E.\ O'Riordan \footnote{School of Mathematical Sciences, Dublin City
University, Dublin 9, Ireland.\ email: eugene.oriordan@dcu.ie}\quad \and M.\ L.\ Pickett\footnote{Department of Mathematics, University of Portsmouth, U.K. \ email: maria.pickett@port.ac.uk}}
\begin{document}
\maketitle
\begin{abstract}
A singularly perturbed problem involving two singular perturbation parameters is discretized using the classical upwinded finite difference scheme on an appropriate piecewise-uniform Shishkin mesh. Scaled discrete derivatives (with scaling only used within the layers) are shown to be parameter uniformly convergent to the scaled first derivatives of the continuous solution.
\end{abstract}

Keywords:  Singularly perturbed, Two parameter, Shishkin mesh, Scaled first derivative

AMS subject classifications:    65L11,\ 65L12,\ 65L70. 

\section{Introduction}

A characteristic feature of singularly perturbed problems is the appearance of steep gradients in the solution.
In order to generate pointwise accurate parameter-uniform \cite{fhmos} numerical approximations to the solution in the layer regions, where the steep gradients occur, it is useful to identify the correct scale of the gradients. In the case of singular perturbation problems involving one perturbation parameter,  this scale is normally some inverse power of the singular perturbation parameter. In the case of singular perturbation problems involving two perturbation parameters, the scale of the gradients appearing in the layer regions can   depend on one or both singular perturbation parameters. Outside the layer regions, the gradients are of order one. In this paper, we generate pointwise accurate numerical approximations to both the solution and the scaled first derivative of the solution. The first derivative of the solution is unbounded within the layers and so we estimate the accuracy of the appropriately scaled first derivative within the layered regions.

In the case of singularly perturbed boundary value problems of the form
\[
-\ve u'' +a(x) u'(x) +b(x) u = f(x), x \in (0,1); \quad a(x), b(x) >0;
\]
which contain a single perturbation parameter $0<\ve \leq 1$, parameter-uniform pointwise error bounds \cite{fhmos} on numerical approximations to the scaled first derivative $\ve u'$ have been established \cite{andreev1,andreev2, fhmos}. In these publications, a scaling factor of $\ve$ is applied (to the error in estimating $u'$) throughout the domain $[0,1]$. Kopteva and Stynes \cite{kopteva} established a first order error bound for approximations to the first derivative of the solution, where the scaling was only applied within the computational layer region, where $x_i \leq C\ve \ln N$. Shishkin \cite{shish1,shish2} examined a more sophisticated metric, which involved the scaling factor smoothly changing from a scale of $\ve$ for $x \leq \ve$  to no scaling outside the analytical layer region, where $x \geq C\ve \ln (1/\ve)$. However, Shishkin \cite{shish2} also established that a numerical method combining an upwind finite difference scheme with a piecewise-uniform layer-adapted  mesh is not a parameter-uniform numerical method in this new metric. In this paper, we will establish parameter-uniform bounds on approximations to the scaled first derivative of the solution of a two parameter singularly perturbed boundary value problem, where we simply scale (by appropriate factors) within the analytical layer regions only. Our method of proof is based on the analysis in \cite{gracia1,gracia2,gracia3}, which dealt with singularly perturbed parabolic and elliptic problems containing a single perturbation parameter.

In \cite{highorder2parameter} a second order parameter-uniform scheme was constructed for the two parameter problem considered below. Using the same scaling (as in the current paper) such a scheme automatically has essentially first order convergence for the scaled first derivatives. However, the finite difference operator involved in the scheme from  \cite{highorder2parameter} is rather complicated. Here, we deal with the  simple upwind finite difference operator, which is only a first order scheme for the solution. However, this simple numerical method generates first order (up to logarithmic factors) approximations to the scaled first derivatives. The key ingredient within the numerical method is the design of a suitable piecewise-uniform Shishkin mesh.

 Note that in \cite{linss,lr}, the transition parameters for the Shishkin mesh, involve the roots of a quadratic function, which is non-trivial in the case of variable coefficients. Below the appropriate scaled weighting factors to be used in estimating the derivatives and  the transition parameters for the mesh are explicitly stated in terms of the two singular perturbation parameters $\ve$ and $\mu$.  In \cite{Priyadharshini} the authors consider numerical approximations to the scaled first derivative of the solution of the singularly perturbed two parameter problem considered in the current paper. The method of proof is based on the argument given in \cite{fhmos} for the special case of $\mu=1$.  However, many of the main results (e.g. \cite[Lemma 5]{Priyadharshini}) are stated without  proof and certain crucial steps in the supplied proofs do not hold up to scrutiny (e.g. see the bound (16) in \cite[Lemma 10]{Priyadharshini} and note that in the left layer region \cite[Lemma 7]{Priyadharshini} simply yields that the error is bounded by $CN^{-1}$.). In this paper, we use a different method of proof from \cite{fhmos} and all the relevant details for the proofs are supplied.

In the broad context of singularly perturbed problems, there are two main classes of problems (reaction-diffusion and convection-diffusion) studied in the literature. One attraction of considering the two-parameter-problem is that this problem class encompasses both of these classes. Nevertheless, in the proofs of the main results given below, we see that this classification into two types of problem classes  persists. The numerical analysis presented below re-enforces the distinction between singularly perturbed problems of reaction-diffusion type and those of  convection-diffusion type.

The paper is structured as follows. In Chapter 2, {\it a priori} bounds on the first five derivatives of the continuous solution are established. These bounds motivate the scaling  used in the definition of the scaled $C^1$-norm, which is the norm used to measure the error in the numerical approximations. The numerical method is constructed in Chapter 3. Chapter 4 is the core chapter, where the nodal error analysis is given. The global error analysis is conducted in Chapter 5 and  a numerical example is given in Chapter 6. The technical details of  the  proofs  of some of the theoretical results are given in the Appendices. 

Notation: Throughout the paper, $C$ denotes a generic constant that is independent of the singular perturbation parameters $\ve, \mu$ and the number of mesh elements $N$. We adopt the following notation for the semi-norms of the solution:
\[
\vert z \vert _k := \max _{x \in [0,1]} \Bigl \vert \frac{d^k z}{dx ^k} \Bigr \vert ,\qquad  \Vert z \Vert := \max _{x \in [0,1]} \vert z(x) \vert  .
\]
 The following notation appears throughout the paper:
\[
\theta := \max \{ 1, \frac{\alpha \mu ^2}{\gamma \ve }\}, \qquad  \rho _L := \frac{1}{2}\sqrt{\frac{\gamma \alpha}{\theta \ve }}\quad \hbox{and}  \quad \rho _R:= \sqrt{\frac{\theta \gamma \alpha}{ \ve }}. 
\]
The analytical layer widths are denoted by $\tau _L, \tau _R$ and the computational layer widths are denoted by  $\sigma _L, \sigma _R$. 

\section{Continuous problem}

Find $u \in  C^5 (\Omega) \cap C^0(\bar \Omega) $ such that
\begin{subequations}\label{cp1}
\begin{eqnarray}
L_{\ve, \mu}u:= -\ve u'' +\mu a(x) u' +b(x) u = f(x),\ x \in \Omega :=(0,1), \label{diffeq} \\
u(0)=0,\quad u(1) =0,  \\
a (x) > \alpha  > 0,\ b(x) > \gamma a(x)> 0,\quad x \in \Omega .
\end{eqnarray}
\end{subequations}
 The functions $a,b$ and $f$ are assumed to be sufficiently smooth  on $\Omega $ and the perturbation parameters satisfy $0 <  \ve   \leq 1$, $0 \leq \mu  \leq 1$. Since the problem (\ref{cp1}) is linear, there is no loss in generality in assuming zero boundary conditions. Our interest lies in the case where $\ve, \mu$ are both small parameters.
 Given the constraint (\ref{cp1}c), there is no loss in generality in assuming that
 \begin{equation}\label{assum}
 b \pm  2\mu \max \{ a'\} > 0;
 \end{equation}
 as the case where $\mu \geq \mu _0 >0$, and $\mu _0$ is a fixed positive constant, has been dealt with in earlier publications \cite{gracia3}.

As in \cite{pickett} the problem naturally splits into the two separate cases of:
\[
0 \leq \frac{\alpha \mu ^2}{\gamma \ve} \leq 1 \quad {\rm and} \quad
\frac{\alpha \mu ^2}{\gamma \ve} \geq 1 .
\]
We refer to the first case as the reaction-dominated case and the second case as the convection-dominated case. We associate the following parameter
\begin{equation}
\frac{\alpha }{\gamma \ve}  \geq \theta := \max \{ 1, \frac{\alpha \mu ^2}{\gamma \ve } \} \geq 1;
\end{equation}
 with this division of the parameter space $P_{\ve,\mu}:= \{ (\ve,\mu): 0 <  \ve   \leq 1, \ 0 \leq \mu  \leq 1\}$. Our first result establishes preliminary parameter-explicit bounds on the continuous solution and it's derivatives.

\begin{lemma}\label{apriori}
\begin{subequations}\label{crude}
Assume $a,b,f \in C^3(\Omega)$, then the solution $u$ of problem (\ref{cp1}) satisfies
\begin{eqnarray}
\Vert u \Vert &\leq & 
\frac{1}{\gamma \alpha} \Vert f \Vert ;\\
 \sqrt{\ve \theta} \vert  u \vert _1 &\leq & C(1+\theta) \Vert u \Vert + C \Vert f \Vert ;\end{eqnarray}
and, for all $k$ such that  $2 \leq k \leq 5$;
\begin{equation}
\ve  ^{k/2} \vert  u \vert _k  \leq  C\theta ^{(k/2-1)} (1+\theta) \Vert u \Vert + C \sum _{j=0}^{k-2} \ve ^{j/2} \theta ^{(k-j-2)/2} \vert f \vert _j.
\end{equation}
\end{subequations}
\end{lemma}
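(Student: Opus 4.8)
The plan is to treat the three estimates in increasing order of difficulty: the zeroth-order bound comes from the maximum principle, while the derivative bounds come from a local rescaling (''stretching'') argument that reduces the operator, near any point of $[0,1]$, to one with coefficients and data bounded by constants, after which classical interior regularity estimates apply and the scaling factors can be undone.

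For the bound on $\Vert u\Vert$ I would use the comparison principle for $L_{\ve,\mu}$. Since $b(x) > \gamma a(x) > \gamma\alpha$, the constant barrier $\psi := \Vert f\Vert/(\gamma\alpha)$ satisfies $L_{\ve,\mu}\psi = b\psi \ge \Vert f\Vert \ge |L_{\ve,\mu}u|$ on $\Omega$ and $\psi \ge 0 = |u|$ on $\partial\Omega$; as $L_{\ve,\mu}$ has positive zeroth-order coefficient it is inverse-monotone, so $\psi\pm u \ge 0$ and $\Vert u\Vert \le \Vert f\Vert/(\gamma\alpha)$.

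For the derivative bounds the key is the dichotomy already flagged after \eqref{cp1}. In the reaction-dominated regime ($\theta=1$) I would rescale by $x=\sqrt{\ve}\,\xi$, turning \eqref{diffeq} into $-\ddot w + (\mu a/\sqrt\ve)\dot w + b w = f$; since $\mu^2\le\gamma\ve/\alpha$ forces $\mu/\sqrt\ve\le\sqrt{\gamma/\alpha}$, every coefficient and the datum, and their $\xi$-derivatives up to order three, are bounded by a constant times $\Vert u\Vert$ or a seminorm of $f$. In the convection-dominated regime ($\theta=\alpha\mu^2/(\gamma\ve)$) I would instead rescale by $x=(\ve/\mu)\xi$ and divide by $\mu^2/\ve$, obtaining $-\ddot w + a\dot w + (\ve b/\mu^2)w = (\ve/\mu^2)f$; here $\ve/\mu^2=\alpha/(\gamma\theta)\le\alpha/\gamma$, so again all data are uniformly bounded. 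In either case the rescaled $w$ solves a problem with $O(1)$ coefficients, and classical interior (Schauder-type) estimates give $|w^{(k)}(\xi_0)|\le C(\Vert w\Vert + \sum_{j=0}^{k-2}\Vert g^{(j)}\Vert)$ for $2\le k\le 5$, and $|\dot w(\xi_0)|\le C(\Vert w\Vert+\Vert g\Vert)$, where $g$ is the rescaled datum. It then remains to undo the scaling: using $d^k/dx^k=\ve^{-k/2}d^k/d\xi^k$ in the first regime and $d^k/dx^k=(\mu/\ve)^k d^k/d\xi^k$ in the second, together with $\Vert w\Vert=\Vert u\Vert$ and the identities $\ve^{k/2}(\mu/\ve)^k=(\gamma/\alpha)^{k/2}\theta^{k/2}$ and $\ve/\mu=\sqrt{\alpha/\gamma}\,\sqrt{\ve/\theta}$, I would check that the prefactors collapse exactly to the stated powers. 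In the reaction case this yields $\ve^{k/2}|u|_k\le C\Vert u\Vert+C\sum_j\ve^{j/2}|f|_j$ (the claimed estimate with $\theta=1$); in the convection case it yields $C\theta^{k/2}\Vert u\Vert+C\sum_j\ve^{j/2}\theta^{(k-j-2)/2}|f|_j$, which matches since $1+\theta\sim\theta$ there, and the same computation with $k=1$ reproduces the bound on $\sqrt{\ve\theta}\,|u|_1$.

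The main obstacle I anticipate is not the algebra of the two regimes, which is routine once they are separated, but two technical points in the regularity step. First, near the endpoints $x=0,1$ a two-sided stretched neighbourhood of fixed length need not fit inside $[0,1]$; this I would handle by a one-sided interior estimate combined with the known boundary data $u(0)=u(1)=0$. Second, the interior estimate for $w^{(k)}$ needs the coefficients smooth enough, which is guaranteed by $a,b,f\in C^3$ for $k\le 5$; and if one prefers to obtain the higher-derivative bounds by differentiating the equation and invoking the maximum principle rather than Schauder estimates, one needs the zeroth-order coefficient of the twice-differentiated operator, namely $b+2\mu a'$, to remain positive. This is precisely the content of the standing assumption \eqref{assum}, which I would therefore invoke to keep the reduced operators inverse-monotone throughout the bootstrap.
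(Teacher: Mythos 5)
Your proposal is correct, and its core mechanism is in fact the same one the paper uses, but dressed in different machinery, so a comparison is worthwhile. Your stretched variables ($x=\sqrt{\ve}\,\xi$ when $\theta=1$, $x=(\ve/\mu)\xi$ when $\theta>1$) correspond exactly to the paper's single choice of neighbourhood radius $r=\sqrt{\ve\gamma/(2\theta\alpha)}$, which is $O(\sqrt{\ve/\theta})$ in both regimes; the parameter $\theta$ is precisely what lets the paper avoid your case split and run one unified argument. Where you diverge is in execution: the paper proves the first-derivative bound by hand --- Mean Value Theorem for an intermediate point $y$ with $|u'(y)|\le 2\Vert u\Vert /r$, then integration of the ODE from $y$ to $x$, with an integration by parts on the $\mu a u'$ term (so that only $u$, not $u'$, appears under the integral and no absorption or Gronwall step is needed) --- and then obtains all higher derivatives by simply differentiating the equation and applying the triangle inequality, using $\mu\le C\sqrt{\ve\theta}$ at each step. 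You instead delegate both steps to classical interior Schauder-type estimates for the rescaled problem and then unwind the scaling (your algebra for the prefactors checks out, including the $k=1$ case once one keeps track of the factor $\ve/\mu^2\le C\theta^{-1}$ multiplying $\Vert f\Vert$). Your approach is more modular but imports a black box and forces separate treatment of the two regimes and of the boundary; the paper's is fully self-contained, regime-uniform, and needs no boundary care beyond taking the one-sided neighbourhood $N_x=(p,p+r)\subset(0,1)$. One small correction: your closing concern about the standing assumption (\ref{assum}) is unnecessary here --- since the higher-derivative bounds follow from the differentiated equation by direct estimation rather than by a maximum principle for the differentiated operator, positivity of $b\pm 2\mu\max\{a'\}$ plays no role in this lemma (it is used later, for the discrete operator $\hat L^N$ acting on the error flux).
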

\begin{proof}
We follow the argument in \cite[Lemma 2.2]{maria1}.
By the maximum principle $\Vert u\Vert \leq C$. Given any $x \in (0,1)$, we construct an open neighbourhood $N_x :=(p,p+r)$ such that $x \in N_x \subset (0,1)$. By the Mean Value Theorem, there exists a $y \in N_x$ such that
\[
\vert u'(y) \vert = \vert \frac{u(p+r)-u(p)}{r} \vert \leq \frac{2 \Vert u \Vert}{r}.
\]
Note that
\begin{eqnarray*}
u'(x) &=&u'(y) + \int _{t=y}^x u'' \ d t \quad
= \quad u'(y) + \frac{1}{\ve} \int _{t=y}^x \mu au'+bu -f \ d t \\
&=&u'(y) + \frac{\mu}{\ve} ((au)(x)-(au)(y))  -\frac{1}{\ve}\int _{t=y}^x \mu a'u-bu +f \ d t. \end{eqnarray*}
Thus
\[
\vert u'(x) \vert \leq C(\frac{1}{r}+ \frac{\mu}{\ve}  +\frac{r}{\ve}) \Vert u \Vert + \frac{r}{\ve} \Vert f \Vert.
\]
By taking the radius $r$ of the neighbourhood $N_x$ to be
\[
r = \sqrt{\frac{\ve \gamma}{2\theta \alpha}};
\]
we obtain the desired bound on $\vert u' \vert$. Use the differential equation (\ref{diffeq}) to obtain the bound on the second derivative, by observing that
\[
\ve \vert  u'' \vert = \vert \mu a u' -bu+f\vert \leq  C \sqrt{\theta \ve} \vert u' \vert + C (\Vert u \Vert + \Vert f \Vert).
\]
Differentiating both sides of the differential  (\ref{diffeq}) we  get that
\[
\ve \sqrt{\ve} \vert  u''' \vert  \leq  C \ve \sqrt{\theta } \vert u'' \vert + C \sqrt{\ve} (\vert u '\vert + \Vert u \Vert + \vert f' \vert).
\]
Repeating  the above  argument, we obtain the stated bounds on the third derivative. Continue this argument to obtain the bounds on all the higher derivatives.
\end{proof}
In order to obtain parameter-uniform error estimates on the numerical approximations, constructed in later sections, we decompose the solution  into regular and singular components. The regular component is constructed so that the first three derivatives of this component are bounded independently of the small parameters $\ve , \mu$.

 The continuous solution of (\ref{cp1})  is decomposed into the following sum
\begin{subequations}
\begin{equation}\label{decompf}
u(x) = v(x) + \bigl( (u-v)(0)\bigr) w_{L}(x) + \bigl((u-v-w_L)(1)\bigr) w_{R}(x)
 \end{equation}
where $w_{L}$ and $w_{R}$ satisfy homogeneous differential
equations and
\begin{eqnarray}
L_{\ve,\mu}v &=& f, \ (v(0) \textrm{ and } v(1)
\textrm{ chosen \ appropriately),}\label{eqnv1}\\
L_{\ve,\mu}w_{R} &=& 0,  \  w_R(0)=0, \ w_{R}(1) = 1,\ \label{eqnwr1}\\
 L_{\ve,\mu}w_{L} &=& 0, \
 w_{L}(0) = 1
 \label{eqnwl1}\\
&& \textrm{if}\ \mu^2 \leq \frac{\gamma\ve}{\alpha}, w_{L}(1) =
0 \ \textrm{else}\ w_{L}(1)\ \textrm{is chosen appropriately} \nonumber .
\end{eqnarray}
\end{subequations}
  We introduce the following notation for the reduced  differential operators $L_0,L_\mu $,
  \[
  L_0z:=bz \quad \hbox{and} \quad L_\mu z := \mu a z'+bz.
  \]
	
	In the next Theorem, we refine the bounds on the continuous solution $u$ given in Lemma 1. These sharper bounds identify both the location and the scale of the layers, which are used in the construction of the piecewise-uniform Shishkin mesh \cite{fhmos}.
	In addition, these bounds identify the appropriate scaling to use when estimating the error in approximating the first derivatives of the continuous solution $u$. For example, from these bounds we see that
	\begin{eqnarray*}
 \vert u'(x) \vert \leq C, \qquad \hbox{for} \qquad  2\frac{\sqrt{\ve \theta }}{\sqrt{\gamma \alpha}} \ln \frac{1}{\sqrt{\ve \theta }}\leq  x \leq 1 - \sqrt{\frac{\ve}{\gamma \alpha \theta}} \ln \sqrt{\frac{\theta}{\ve}}.
	\end{eqnarray*}
\begin{theorem}\label{boundwrightleft} Assume that $a \in C^7(\Omega), b,f \in C^9(\Omega)$.
Boundary conditions $v(0), \ v(1)$ for the regular component $v$ can be chosen so that the derivatives of the  regular component (defined in (\ref{decompf}b)) satisfy the bounds
\begin{equation}\label{boundvf21}
(i) \qquad \vert v \vert _k \leq C(1+\bigl(\sqrt{\frac{\ve}{\theta}}\bigr)^{3-k}), \quad \textrm{for}
\quad 0\leq k \leq 5.
\end{equation}
When the solution $u$ of problem (\ref{cp1}) is decomposed as in
(\ref{decompf}), the singular components $w_{L}$ and $w_{R}$ (defined in (\ref{eqnwr1}, \ref{eqnwl1}))
satisfy the following bounds
\begin{subequations}\label{exp-bound-w}
\begin{eqnarray}
&(ii)& \quad |w_{L}(x)| \leq C e^{-\frac{\sqrt{\gamma \alpha}}{2\sqrt{\ve \theta }}x} \ , \
 |w_{R}(x)| \leq C e^{-\frac{\sqrt{\gamma \alpha \theta}}{\sqrt{\ve }}(1-x)};\\
 &(iii)& \quad |w_{R}(x)|_k \leq C \bigl(\sqrt{\frac{\theta}{\ve }} \bigr)^k e^{-\frac{\sqrt{\gamma \alpha \theta}}{\sqrt{\ve }}(1-x)},\quad 1\leq k \leq 5; \\
 &(iv)& \quad |w_{L}(x)|_k \leq C\bigl(\frac{1}{\sqrt{\ve \theta }}\bigr)^k (1+\theta ^{k-3})e^{-\frac{\sqrt{\gamma \alpha}}{2\sqrt{\ve \theta }}x},\quad 1\leq k \leq 5.
\end{eqnarray}
\end{subequations}
\end{theorem}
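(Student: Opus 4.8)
The plan is to prove the three groups of bounds in turn, splitting throughout according to the two regimes $\theta=1$ (reaction-dominated, $\mu^2\le\gamma\ve/\alpha$) and $\theta=\alpha\mu^2/(\gamma\ve)\ge1$ (convection-dominated), since the left and right layer scales $\rho_L$ and $\rho_R$ coincide in the former and separate only in the latter. For the regular component (i), I would construct $v$ from a truncated asymptotic expansion whose terms have $\ve,\mu$-uniformly bounded derivatives. The leading term solves the relevant reduced equation --- $L_0v_0=f$, giving $v_0=f/b$, in the reaction-dominated regime, and $L_\mu v_0=f$ (the smooth, non-layer solution of this first-order problem) in the convection-dominated regime --- and the correctors are obtained by inverting the same reduced operator against the residual left by the preceding term, so that each term and its derivatives are $O(1)$. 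Choosing $v(0),v(1)$ to equal the value of the truncated sum makes the remainder $R$ satisfy $L_{\ve,\mu}R=$ (a small truncation residual) with homogeneous boundary data, and the a priori bounds of Lemma~\ref{apriori} applied to $R$ then give $|R|_k\le C(\sqrt{\ve/\theta})^{3-k}$; combined with the $O(1)$ bounds on the expansion terms this yields (i). The fiddly point is the bookkeeping of the mixed $\ve$- and $\mu$-dependent residual terms that $L_{\ve,\mu}$ produces when acting on the reduced solutions.

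For the pointwise bounds (ii) I would use the maximum principle, available since $b>0$, with the comparison functions $B_R(x)=Ce^{-\rho_R(1-x)}$ and $B_L(x)=Ce^{-\rho_L x}$. A direct computation gives $L_{\ve,\mu}B_R=(-\ve\rho_R^2+\mu a\rho_R+b)B_R$ with $\ve\rho_R^2=\gamma\alpha\theta$, and $\mu a\rho_R+b\ge\gamma\alpha\theta$ holds in both regimes (using $a\ge\alpha$ and $b>\gamma a$), so $L_{\ve,\mu}B_R\ge0$; similarly $L_{\ve,\mu}B_L=(-\ve\rho_L^2-\mu a\rho_L+b)B_L$, where $\ve\rho_L^2=\gamma\alpha/(4\theta)$ and $\mu a\rho_L\le\gamma a/2$ (the latter because $\mu/\sqrt{\ve\theta}\le\sqrt{\gamma/\alpha}$), whence $\ve\rho_L^2+\mu a\rho_L\le\gamma\alpha/4+\gamma a/2<\gamma a<b$ and $L_{\ve,\mu}B_L\ge0$. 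The factor $\tfrac{1}{2}$ built into $\rho_L$ is precisely what keeps the left barrier below $b$. Comparing $B\pm w$ and invoking the maximum principle gives (ii); in the convection-dominated case, where $w_L(1)\ne0$, I would note that the prescribed value is itself of the order of $B_L(1)$, so the comparison still closes.

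For the derivative bounds I would first dispose of the right layer (iii). Applying Lemma~\ref{apriori} globally already yields $|w_R|_k\le C(\sqrt{\theta/\ve})^k$ --- the crude global scale here is exactly the claimed scale --- so it only remains to attach the exponential decay. This I would do by repeating the neighbourhood/Mean-Value-Theorem argument of Lemma~\ref{apriori} on sub-intervals of width $\sim1/\rho_R$, on which (ii) shows the exponential factor varies by an $O(1)$ amount and is thereby transferred onto the derivatives, each differentiation costing one factor of $\rho_R\sim\sqrt{\theta/\ve}$. In the reaction-dominated regime the identical argument simultaneously gives (iv), since there $1/\sqrt{\ve\theta}=\sqrt{\theta/\ve}=1/\sqrt{\ve}$ and $1+\theta^{k-3}=2$.

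The genuinely hard part is the left-layer bound (iv) in the convection-dominated regime, because the claimed scale $1/\sqrt{\ve\theta}$ is smaller than the crude global scale $\sqrt{\theta/\ve}$ by a factor $\theta$, so no localisation of Lemma~\ref{apriori} alone can reach it; one must exploit the finer structure of $w_L$. Here I would introduce a secondary decomposition: under the stretching $x\mapsto x/\mu$ the left-layer equation becomes the one-parameter convection-reaction problem $-(\ve/\mu^2)W''+aW'+bW=0$ with small parameter $\ve/\mu^2=\alpha/(\gamma\theta)$, whose non-layer solution is smooth on the $O(1)$ stretched scale and accounts for the clean scale $1/\sqrt{\ve\theta}$ and the $O(1)$ constant for $k\le3$. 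Splitting $w_L$ into this outer part and an inner correction concentrated at the finer scale $\ve/\mu$ (the same order as the right-layer width) and bootstrapping the correction through the differentiated equation $\ve w_L''=\mu aw_L'+bw_L$ produces the extra factor $1+\theta^{k-3}$ for $k\ge4$. Controlling this inner correction and its high derivatives uniformly --- the interplay, inside a single layer region, between the scales $\mu$ and $\ve/\mu$ --- is the step I expect to be most delicate, and it is exactly what generates the $\theta$-dependent loss recorded in (iv).
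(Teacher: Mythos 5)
Your proposal follows essentially the same route as the paper's proof in Appendix A: reduced-operator expansions ($L_0$ when $\theta=1$, $L_\mu$ when $\theta>1$) with a remainder controlled by Lemma~\ref{apriori} for (i), exponential comparison functions and the maximum principle for (ii), localisation of the Mean Value Theorem argument plus ODE bootstrapping to attach the exponential decay for (iii), and a secondary decomposition of $w_L$ in powers of $\ve$ relative to $L_\mu$ for (iv) --- your stretched-variable expansion in the small parameter $\ve/\mu^2\sim 1/\theta$ is precisely the paper's decomposition (\ref{sec-decomp}), whose remainder term generates the $\theta^{k-3}$ loss exactly as you describe. The only blemish is the trivial algebra slip $\ve/\mu^2=\gamma/(\alpha\theta)$ rather than $\alpha/(\gamma\theta)$, which does not affect the argument.
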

\begin{proof} The proof is given in Appendix A. 
\end{proof}

 Based on the bounds (\ref{exp-bound-w}b) and (\ref{exp-bound-w}c), we identify  the decay rates in each of the layer regions
by \begin{equation}\label{decay-rates}
\rho _L := \max \{ 1, \frac{1}{2}\sqrt{\frac{\gamma \alpha}{\theta \ve }} \}\quad \hbox{and}  \quad \rho _R:= \max \{ 1, \sqrt{\frac{\theta \gamma \alpha}{ \ve }} \}
\end{equation}
and the associated  layer widths (for the continuous solution) to be
\[
\frac{1}{\rho _L }  \ln \rho _L   \quad \hbox{and} \quad  \frac{1}{\rho _R }  \ln \rho _R .
\]
Throughout the paper we shall assume that the parameters $\ve $ and $\mu $ are such that  $\rho _L  >1$ and  $\rho _R> 1$, as the case where $\rho _L  =1$ (or $ \rho _R= 1$)
means no layer appears on the left (or on the right) and this case can be analysed  using classical arguments. 

Note that
\[
\vert w'_L(x) \vert \leq C, \ x \geq \frac{1}{\rho _L }  \ln \rho _L \quad \hbox{and}
\quad
\vert w'_R(x) \vert \leq C, \ x \leq 1-\frac{1}{\rho _R }  \ln \rho _R.
\]
In order to establish the main parameter-uniform error bound, we define the following (slightly wider) {\it analytical}  layer widths to be
\begin{equation}\label{tau}
\tau  _L :=  \frac{2}{\rho _L }  \ln \rho _L    \quad \hbox{and} \quad \tau  _R :=  \frac{2}{\rho _R }  \ln \rho _R
\end{equation}
and we choose to measure the accuracy of our numerical approximations in the following weighted $C^1$ norm
\begin{eqnarray}\label{norm}
 \Vert v \Vert _{1,\chi} := \Vert \chi  v'  \Vert   + \Vert v \Vert ,  \quad   \hbox{where} \quad
\chi(x) := \left \{  \begin{array}{lll} \sqrt{\ve \theta} , \  \hbox{if }\quad    x   \leq \tau _L,\\
1,\quad   \quad \hbox{if }\quad  \tau _L <  x  < 1-  \tau _R,  \\
\sqrt{\frac{\ve}{\theta}} , \  \hbox{if}\quad   x \geq 1-\tau _R,
\end{array} \right .
\end{eqnarray}

\section{Discrete Problem}

On the domain  $\Omega $ a piecewise-uniform Shishkin mesh \cite{fhmos}  of $N$ mesh intervals
is constructed as follows. The domain $[0,1]$ is subdivided into three subintervals:
\begin{subequations}\label{scheme1}
\begin{equation}
[ 0,  \sigma _L] \cup  \ [ \sigma _L,1-\sigma _R ] \cup [1-\sigma _R,1],
\end{equation}
where the transition parameters between the subintervals are taken to be
\begin{equation}\label{sigma1}
\sigma  _L := \min \left \{ \frac{1}{4}, \ \frac{2 }{\rho _L}  \ln N \right \} , \quad \sigma  _R := \min \left \{ \frac{1}{4}, \ \frac{4 }{\rho _R} \ln N \right \}.
\end{equation}
\end{subequations}
 Throughout most of the analysis in this paper we shall deal with the case where
\begin{equation}\label{assumption}
\sigma _R \leq \sigma _L <1/4.
\end{equation}
 On each of the two end  subintervals a uniform mesh with $\frac{N}{4}$ mesh-intervals is placed. The remainder of the mesh points are placed in the inner coarse mesh region. Throughout the paper, the mesh step $h_i:=x_i- x_{i-1}$ and $h_L,H,h_R$ denote the  mesh width in the left fine mesh, the central coarse mesh and the right fine mesh, respectively.

The subsequent layer-adapted  piecewise uniform mesh will be denoted by $\omega ^N_{\ve, \mu}$. By this choice of transition parameters, we see that
\[
h ^k_L \vert w_L \vert _k \leq C(N^{-1}\ln N)^k, \quad
h ^k_R \vert w_R \vert _k \leq C(N^{-1}\ln N)^k, \quad k=1,2.\]
The discrete problem is of the form:
\begin{subequations}
\begin{eqnarray}\label{upwindop}
L^{N}U(x_{i}) &=& \bigl( -\ve \delta^{2}  + \mu a D^{-}  + b\bigr)
U (x_i)=f(x_i), \qquad x_{i} \in \omega ^N_{\ve, \mu} ; \\
&&U (0)= u(0), \quad U (1)= u(1);
\end{eqnarray}
\end{subequations}
where $D^-$ denotes the backward difference operator and $\delta ^2$ is the standard replacement to the second derivative on a non-uniform mesh. \footnote{The
finite difference operators  $D^+,D^-,  \delta^2$ are, respectively,  defined by
\[
D^+Z(x_i) :=\frac{Z (x_{i+1})-Z(x_i)}{h_{i+1}};\quad  D^-Z(x_i) :=D^+Z(x_{i-1}) ; \
 \delta^2 Z(x_i) :=\frac{D^+Z(x_{i})-D_r^-Z(x_i)}{(x_{i+1}-x_{i-1})/2}.\]}

 Analogous to the continuous solution, the discrete solution can be decomposed into the sum  $U = V+W_{L}+W_{R}$,
where the components are the solutions of the problems
\begin{subequations}
\begin{eqnarray}
(L^{N}V) (x_i) &=& f(x_i), \qquad V(0) = v(0),\ V(1)=v(1);
\label{eqndv}\\
(L^{N}W_{L})(x_i) &=& 0, \qquad W_{L}(0) =
w_{L}(0), \quad W_{L}(1) =
w_{L}(1);
\label{eqndwl}\\
 (L^{N}W_{R})(x_i) &=& 0, \qquad  W_{R}(0) =
w_{R}(0)=0, \quad W_{R}(1) =
w_{R}(1). \label{eqndwr}
\end{eqnarray}
\end{subequations}
In the next result, we establish bounds on the discrete layer components, which are the discrete counterparts to the bounds (\ref{exp-bound-w}a) established on the continuous layer components.

\begin{theorem}\label{t1} Assume (\ref{assumption}).
We have the following bounds on $W_{L}$ and $W_{R}$
\begin{subequations}
\begin{eqnarray}
\mathopen|{W_{L}}(x_{j})\mathclose| &\leq&
C\prod_{i=1}^{j}(1+\rho _{L}h_{i})^{-1} =: \Psi_L (x_j), \qquad
\Psi_L(0) = C \label{bounddwl}
\\
\mathopen|{W_{R}}(x_{j})\mathclose| &\leq&
C\prod_{i=j+1}^{N}(1+0.5\rho _{R}h_{i})^{-1} =: \Psi_R(x_j), \qquad
\Psi_R(1) = C. \label{bounddwr}
\end{eqnarray}
\end{subequations}
\end{theorem}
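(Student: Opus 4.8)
The plan is to establish both bounds by the discrete comparison principle, using the mesh functions $\Psi_L$ and $\Psi_R$ themselves as barriers. First I would record that the upwind operator $L^N$ is of positive type on an arbitrary mesh: its off-diagonal stencil entries are nonpositive (the backward difference only strengthens the sub-diagonal), its diagonal entry is positive, and each interior row sum equals $b(x_i)>0$, so the associated system matrix is an $M$-matrix and $L^N$ obeys a discrete maximum/comparison principle. Hence it suffices to exhibit, for each layer function, a nonnegative supersolution dominating the relevant boundary data: since $L^N W_L = L^N W_R = 0$, applying the comparison principle to $\Psi_L \pm W_L$ and to $\Psi_R \pm W_R$ then yields $|W_L|\le \Psi_L$ and $|W_R|\le\Psi_R$.

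The central computation is to evaluate $L^N$ on the products. Because $\Psi_L(x_{j-1}) = (1+\rho_L h_j)\Psi_L(x_j)$, the backward difference collapses to the clean identity $D^-\Psi_L(x_j) = -\rho_L\Psi_L(x_j)$, and the same elementary manipulation gives $\delta^2\Psi_L(x_j) = \tfrac{2\rho_L^2 h_{j+1}}{(1+\rho_L h_{j+1})(h_j+h_{j+1})}\Psi_L(x_j)$; the analogous steps for the right barrier produce $D^-\Psi_R(x_j) = \tfrac{0.5\rho_R}{1+0.5\rho_R h_j}\Psi_R(x_j)>0$ together with a corresponding expression for $\delta^2\Psi_R$. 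Substituting into $L^N$ reduces the supersolution property to a pointwise scalar inequality on the bracketed coefficient. For the left barrier the convective term is unfavourable, so I would bound the diffusive contribution by $2\ve\rho_L^2 = \gamma\alpha/(2\theta)$ and use the definition of $\theta$ to obtain $\mu\rho_L \le \gamma/2$; together with $b>\gamma a$ and $a>\alpha$ this gives $b - \mu a\rho_L - 2\ve\rho_L^2 \ge \tfrac12\gamma a - \tfrac{\gamma\alpha}{2\theta} \ge \tfrac{\gamma\alpha}{2}(1-\theta^{-1}) \ge 0$, so that $L^N\Psi_L \ge 0$. Here the factor $\tfrac12$ built into $\rho_L$ is precisely what supplies the needed slack.

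For the right barrier the argument splits along the reaction/convection dichotomy, and this is the step I expect to be the main obstacle. In the reaction-dominated case ($\theta=1$) the reaction term alone suffices, since $b>\gamma\alpha>\tfrac12\ve\rho_R^2$. In the convection-dominated case one cannot discard the now-favourable convective term: using $\rho_R = \alpha\mu/\ve$ one checks that $\tfrac12\mu a\rho_R = \tfrac{\alpha a\mu^2}{2\ve} \ge \tfrac{\alpha^2\mu^2}{2\ve} = \tfrac12\ve\rho_R^2$ exactly because $a\ge\alpha$, so the convective contribution balances the diffusive one and the bracket stays nonnegative. The delicate point is recognising that the crude estimate (bounding away the convective term) fails here and that the cancellation must be carried out case by case.

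Finally I would verify the boundary domination. The endpoints $\Psi_L(0)=\Psi_R(1)=C\ge 1$ cover $w_L(0)=w_R(1)=1$, and $W_R(0)=0\le\Psi_R(0)$ is immediate. The only subtle endpoint is $W_L(1)=w_L(1)$ in the convection-dominated case, which I would control by combining the exponential estimate $|w_L(1)|\le Ce^{-\rho_L}$ from Theorem~\ref{boundwrightleft} with the elementary inequality $\prod_{i=1}^N(1+\rho_L h_i)^{-1} \ge e^{-\rho_L\sum_i h_i}=e^{-\rho_L}$, after enlarging the generic constant $C$ appearing in $\Psi_L$. With the supersolution property and the boundary domination in hand, the comparison principle delivers both stated bounds.
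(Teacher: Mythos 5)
Your proposal is correct and follows essentially the same route as the paper: use $\Psi_L \pm W_L$ and $\Psi_R \pm W_R$ as comparison functions, compute $D^-$ and $\delta^2$ of the barriers exactly, reduce positivity of $L^N\Psi_{L,R}$ to the scalar inequalities $\mu a \rho_L \le \tfrac{\gamma a}{2}$, $2\ve\rho_L^2 \le \tfrac{\gamma\alpha}{2\theta}$ and $\tfrac12\mu a\rho_R \ge \tfrac12\ve\rho_R^2$ (respectively $b > \tfrac12\gamma\alpha$ when $\theta=1$), and handle the nonzero boundary datum $w_L(1)$ via $e^{-x}\le(1+x)^{-1}$. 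Your explicit $\theta=1$ versus $\theta>1$ split for the right barrier, and your use of Theorem~\ref{boundwrightleft} for $|w_L(1)|$ rather than the secondary-decomposition bound $|w_L(1)|\le e^{-\gamma/\mu}$, merely spell out steps the paper leaves implicit.
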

\begin{proof} (i) We begin  with the left boundary layer function $W_{L}$. Recall that $w_L(1) \neq 0$ when $\theta \neq 1$. In this special case, observe that
\[
\rho _L = \frac{\gamma}{2 \mu}, \quad \hbox{if} \ \ \theta \neq 1.
\]
From this and the inequality $e^{-x} \leq (1+x)^{-1}, x >0$, one can deduce that $\Psi_L(1) \geq C e^{-\frac{\gamma}{2 \mu}}$  when $\theta \neq 1$. Hence for all $\theta$, $ \Psi_L(1) \geq \vert w_L(1) \vert$ and $ \Psi_L(0) \geq \vert w_L(0) \vert$. Next we consider the interior mesh points.

Consider $\Phi_{L}^{\pm}(x_{j}) :=
\Psi_L (x_j) \pm W_{L}(x_{j})$, where $\Psi_L (x_j)$ is defined in (\ref{bounddwl}). We have
$L^{N}\Phi_{L}^{\pm}(x_{j}) =-\ve \delta^2\Psi_L (x_j) +
\mu a D^{-} \Psi_L (x_j) + b \Psi_L (x_j)$. Using the properties
\begin{eqnarray*}
\Psi_L (x_j) >0, D^{-} \Psi_L (x_j) = - \rho _{L}
\Psi_L (x_j) <0,\quad \hbox{and} \\ \delta^{2} \Psi_L (x_j) =
\rho _{L}^2 \Psi_L (x_{j+1})\frac{h_{j+1}}{\bar{h_{j}}}>0, 
\end{eqnarray*}
we obtain
\begin{equation*}
L^{N}\Phi_{L}^{\pm}(x_{j}) = -\ve {\rho _{L}}^2
\Psi_L (x_{j+1})\frac{h_{j+1}}{\bar{h_{j}}}- \mu a \rho _{L}
\Psi_L(x_j) + b \Psi_L(x_j).
\end{equation*}
 Rewriting we have
\begin{equation*}
L^{N}\Phi _L^{\pm}(x_j) \geq \Psi_L (x_{j+1})\left({2\ve
{\rho _{L}}^2 \left(1-\frac{h_{j+1}}{2\bar{h_{j}}}\right)}+
{(b - 2\ve {\rho _{L}}^2 - \mu a \rho _{L} } + (b-  \mu a \rho _{L})
\rho _{L}h_{j+1} \right).
\end{equation*}

Note that
\begin{eqnarray*}
(b-  \mu a \rho _{L}) = a(\frac{b}{a} -  \mu  \rho _{L})&\geq&  a \gamma 0.5 \quad \hbox{and} \\
b - 2\ve {\rho _{L}}^2 - \mu a \rho _{L} = a\bigl(\frac{b}{a} - \frac{\gamma \alpha}{2\theta a} - \frac{\mu}{2} \sqrt{\frac{\gamma \alpha}{\theta \ve }}\bigr) &\geq&
a\gamma \bigl(1- \frac{1}{2\theta } -  \frac{1}{2 }\sqrt{\frac{ \alpha \mu ^2}{\gamma\theta \ve }}\bigr) \geq 0.
\end{eqnarray*}
 Now using the discrete minimum principle we obtain the required bound (\ref{bounddwl}).

(ii) The same argument is applied to bound $W_{R}$. Consider
$\Phi_{R}^{\pm}({x_j}) = \Psi_R(x_j) \pm
W_{R}(x_{j})$, where $\Psi_R(x_j)$ is defined in (\ref{bounddwr}). Then we have
\[
L^{N}\Phi_{R}^{\pm}({x_j}) =-\ve \delta^2 \Psi_R(x_j) +
\mu a D^{-} \Psi_R(x_j) + b \Psi_R(x_j),
\] and using
\begin{equation*}
 D^{+}
\Psi_R(x_j) = 0.5\rho_{R} \Psi_R(x_j), \textrm{ and } \delta^{2}
\Psi_R(x_j) = \frac{{\rho_{R}}^2}{4(1+0.5\rho_{R}h_{j})}
\Psi_R(x_j)\frac{h_{j}}{\bar{h_{j}}},
\end{equation*}
we obtain
\begin{equation*}
 L^{N}\Phi_R ^{\pm}(x_{j}) \geq
\Psi_R(x_{j-1})\bigg(-0.5\ve {\rho_{R}}^2
+ (b + 0.5\mu a
\rho _{R} )(1+0.5\rho_{R}h_{j})\bigg) \geq 0.
\end{equation*}
We complete the argument using the discrete minimum principle to obtain the required bound
(\ref{bounddwr}).
\end{proof}
From these bounds we deduce that, for all $x_i \leq 1-\sigma _R$,
\begin{subequations}
\begin{eqnarray}
\vert W_R(x_i) \vert \leq  \vert W_R(1-\sigma _R) \vert &\leq& (1+\sqrt{\frac{\theta \gamma \alpha}{4 \ve }}h_{R})^{-N/4} \nonumber \\
&\leq& (1+\frac{8\ln N}{N })^{-N/4} \leq CN^{-2}; \label{wR-outside}
\end{eqnarray}
and, at the left end, for all $x_i \geq \sigma _L$
\begin{eqnarray}
\vert W_L(x_i) \vert \leq  \vert W_L(\sigma _L) \vert &\leq& (1+\sqrt{\frac{\gamma \alpha}{4\theta \ve }}h_{L})^{-N/4}\nonumber \\
&\leq& (1+\frac{8\ln N}{N })^{-N/4} \leq CN^{-2}. \label{wL-outside}
\end{eqnarray}
\end{subequations}
Hence, outside their corresponding layer regions, the discrete layer functions $W_L,W_R$ are small, from a computational perspective.
\section{Nodal error analysis}

We denote the nodal error and associated truncation error, respectively,  by  \[
e(x_i):=U(x_i)-u(x_i), \quad \hbox{and} \quad
 {\cal T}(x_i):=L^{N} e(x_i).\] When bounding the local truncation error, we  utilize the following standard bounds at all mesh points, excluding the transition points: For all  $x_i \neq \sigma _L, 1- \sigma _R$
\begin{eqnarray}\label{TE1}
&&\vert L^N(U-u)(x_i)) \vert \leq \nonumber \\ &&C h_i \Bigl( \ve \max \{ \Vert u^{(3)} \Vert _{[x_{i-1},x_{i+1}]}, h_i \Vert u^{(4)} \Vert _{[x_{i-1},x_{i+1}]} \}+  \mu  \Vert u^{(2)} \Vert _{[x_{i-1},x_{i}]}\Bigr), \
\end{eqnarray}
and at all mesh points
\begin{eqnarray}\label{TE2}
\vert L^N(U-u)(x_i)) \vert \leq C \ve (h_i +h_{i+1})\Vert u^{(3)} \Vert _{[x_{i-1},x_{i+1}]} + C \mu h_i \Vert u^{(2)} \Vert _{[x_{i-1},x_{i}]}.
\end{eqnarray}
We define the discrete error flux to be
$$
{\cal U}^-_{i}:=D^- e(x_i), \quad \hbox{ if } 0< x_i\le 1 \quad \hbox{and} \quad {\cal U}^+_{i}:=D^+ e(x_i), \quad \hbox{ if } 0\leq x_i< 1.
$$
On a piecewise-uniform mesh the finite difference operators $\delta^2$ and $D^-$ do not commute on a non-uniform mesh. Based on this observation, we define a new finite difference operator $\hat \delta ^2$ by
\begin{equation}\label{hat-delta}
\hat \delta ^2 Z_{i}:=
\ds\frac{1}{\hbar_i}\bigl(\ds\frac{h_{i+1}}{h_{i}}D^+
-\ds\frac{\hbar_{i}}{\hbar_{i-1}} D^-  \bigr) Z_{i},
\end{equation}
which has the property that 
\[
\hat \delta ^2 D^- Z_{i} \equiv D^- \delta ^2  Z_{i}
\]
on an arbitrary mesh. Note that the second order operator is $\hat \delta ^2$ on the left and $ \delta ^2$ on the right of this identity. Hence, this identity is not a statement of commutativity. 
Note the following identity (Discrete derivatives of a product of two mesh functions)
\begin{equation}\label{discrete-product-rule}
D^-(P_{i}Q_{i}) \equiv P_{i} D^- Q_{i} +Q_{i-1} D^-P_{i}.
\end{equation}
Using these identities and $D^-(L^{N} e(x_i)) = D^-{\cal T}(x_i)$, we see that for all mesh points within the region $(h_1,1)$, the discrete flux ${\cal U}^-_{i}$ satisfies
\begin{eqnarray}\label{prob-deriv}
\hat L^{N} {\cal U}^-(x_{i}) &=&  D^- {\cal T}(x_i)-e(x_{i-1}) D^- b(x_i), \quad  x_i \in (h_1,1) 
\end{eqnarray}
where for the internal mesh points
\begin{equation} \label{mod-discrete-operator}
 \hat L^{N}Z(x_{i}):=(-\ve \hat \delta ^2
+\mu a(x_{i-1}) D^-
+(b+\mu D^- a)(x_i)I)Z(x_{i}),  \end{equation}
and for the end points $\hat L^{N}Z(x_{i}):=Z(x_{i})$ for  $x_i=h_1,1$.

 Note the following classical bounds on the truncation error:
\begin{eqnarray*}
 (D^-y - y ')(x_i) &=& \frac{1}{h_i}
\int _{t=x_{i-1}}^{x_i} y'(t) - y'(x_i) \ d \ t \\ &=&
\frac{1}{h_i}  \int _{t=x_{i-1}}^{x_i} \int
_{s=x_{i}}^{t}y''(s) \ d  \ s \  \ d \ t .\\  \\
 D^-(D^-y - y ')(x_i)
&=&   \frac{1}{h^2_i}  \int _{t=x_{i-1}}^{x_i} \int
_{s=x_{i}}^{t}y''(s) -y''(x_{i-1}) \ d   s \   d  t \\ &-& \frac{1}{h_ih_{i-1}}  \int _{t=x_{i-2}}^{x_{i-1}} \int
_{s=x_{i-1}}^{t}y''(s) -y''(x_{i-1})\ d   s \   d  t \\
&+& y''(x_{i-1}) \frac{1}{2}(1-\frac{h_i}{h_{i-1}}). \\
 (\delta ^2 y - y '')(x_i) &=& \frac{1}{\bar h_{i}}
 \bigl(\frac{1}{h_{i+1}}  \int _{t=x_{i}}^{x_{i+1}} \int
_{s=x_{i}}^{t}y''(s) -y''(x_i) \ d  \ s \  \ d \ t - \\ &&
\frac{1}{h_i} \int _{t=x_{i-1}}^{x_i} \int
_{s=x_{i}}^{t}y''(s) -y''(x_i) \ d  \ s \  \ d \ t \bigr).
\end{eqnarray*}
Based on these bounds, we have that at any mesh point,
\begin{subequations}\label{trunc_x}
\begin{eqnarray}
\vert D^-(u' - D^-u) (x_i) \vert &\leq& C(1+\frac{h_{i-1}}{h_i}) \Vert u^{(2)} (x) \Vert _{x \in (x_{i-2},x_i)}, \\
\vert D^-(u'' - \delta ^2 u) (x_i) \vert &\leq& C (1+\frac{h_{i-1}+h_{i+1}}{h_i})\Vert u^{(3)} (x) \Vert _{x \in (x_{i-2},x_{i+1})}.
\end{eqnarray}
In addition, if $ h_{i-1}=h_i$, then
\begin{equation}
\vert D^-(u' - D^-u) (x_i) \vert \leq Ch_i \Vert u^{(3)} (x) \Vert _{x \in (x_{i-2},x_i)},
\end{equation}
and if $h_{i-1}=h_i=h_{i+1}$, then
\begin{equation}
\vert D^-(u'' - \delta ^2 u) (x_i) \vert \leq C\max \{ h_i \Vert u^{(4)} (x) \Vert _{x \in (x_{i-1},x_{i+1})}, h^2_i \Vert u^{(5)} (x) \Vert _{x \in (x_{i-2},x_{i+1})} \}.
\end{equation}
\end{subequations}
Based on the assumption (\ref{assum}) the discrete operator $\hat L^{N}$ (\ref{mod-discrete-operator}) satisfies a discrete comparison principle. To bound the  error in the discrete flux ${\cal U}^-_{i}$, we employ a standard stability and consistency argument using the operator $\hat L^{N}$ (and not the operator $L^N$). To this end we bound $D^-(L^N(e(x_i)))$ and the error fluxes at the endpoints of the interval $(h_1,1)$. The main complication in the analysis is the construction of  suitable discrete barrier functions. 

Now we deduce bounds on the regular ${\cal V}^-:=D^-(V-v)$ and the singular  components  ${\cal W}_L^-:=D^-(W_L-w_L), {\cal W}_R^-:=D^-(W_R-w_R)$ of the discrete error flux ${\cal U}^-$. We begin with the singular component $W_L$ as in this case the analysis is a little easier. We will need an appropriate  bound on the boundary error flux $\vert D^+(W_L-w_L)(0)\vert $. We achieve this by sharping the standard nodal error bound $\vert (W_L-w_L)(x_i) \vert \leq CN^{-1} \ln N$, within the layer region on the left, to reflect the fact that $(W_L-w_l)(0)=0$.

\begin{lemma} \label{l_bf_sc2} Assume (\ref{assumption}).
 For  sufficiently large $N$,
\begin{equation}\label{outflow-bnd2}
\sqrt{\ve \theta}\vert D^+(W_L-w_L)(0) \vert \le C N^{-1} (\ln N),
\end{equation}
where $W_L$ is the  solution of (\ref{eqndwl}) and $w_L$ is the  solution of (\ref{eqnwl1}).
\end{lemma}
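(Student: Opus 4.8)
The plan is to turn the flux bound into a \emph{sharpened nodal bound} that exploits $(W_L-w_L)(0)=0$. Writing $\eta:=W_L-w_L$, the zero boundary value gives $D^+\eta(0)=\eta(x_1)/h_1$, and since $x_1=h_L$ lies in the left fine mesh (where $\rho_L h_L = 8N^{-1}\ln N$ under (\ref{assumption})), the whole statement reduces to an estimate of the form $|\eta(x_1)|\le C(N^{-1}\ln N)\,\rho_L h_L$. The prefactor $\sqrt{\ve\theta}$ then closes the argument through the identity $\sqrt{\ve\theta}\,\rho_L=\tfrac12\sqrt{\gamma\alpha}$, since
\[
\sqrt{\ve\theta}\,|D^+\eta(0)| = \frac{\sqrt{\ve\theta}}{h_L}\,|\eta(x_1)| \le \sqrt{\ve\theta}\,\rho_L\,(C N^{-1}\ln N) = C N^{-1}\ln N .
\]
So everything rests on showing that $\eta$ vanishes linearly (in $\rho_L x$) as $x\to 0$ at the first interior node.

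Because $L^{N}W_L=0$ and $L_{\ve,\mu}w_L=0$, the error satisfies $L^{N}\eta(x_i)=-(L^{N}-L_{\ve,\mu})w_L(x_i)=:\tau_i$ on $0<x_i<\sigma_L$, together with $\eta(0)=0$ and, by (\ref{wL-outside}) and Theorem~\ref{boundwrightleft}(ii), $|\eta(\sigma_L)|\le CN^{-2}$. On the uniform fine mesh I would bound the truncation error via (\ref{TE1}) and the layer-derivative bounds of Theorem~\ref{boundwrightleft}(iv). The two-parameter structure enters precisely here: one verifies that $\mu/\sqrt{\ve\theta}\le C$ and $h_L\le C\sqrt{\ve\theta}\,N^{-1}\ln N$ hold in \emph{both} the reaction- and convection-dominated regimes, and these give the uniform, $\theta$-free estimate $|\tau_i|\le C(N^{-1}\ln N)\,e^{-\rho_L x_{i-1}}$.

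I would then take as barrier $\psi(x_i):=1-\Psi_L(x_i)/\Psi_L(0)$, with $\Psi_L$ as in (\ref{bounddwl}); this vanishes at $x=0$, grows like $\rho_L x$ near the boundary, and stays below $1$. Using the identities $D^{-}\Psi_L=-\rho_L\Psi_L$ and $\delta^2\Psi_L=\rho_L^2\Psi_L(x_{j+1})\,h_{j+1}/\bar h_{j}$ already recorded in the proof of Theorem~\ref{t1}, one finds
\[
L^{N}\psi(x_j)= b\Bigl(1-\frac{\Psi_L(x_j)}{\Psi_L(0)}\Bigr) + \mu a\rho_L\,\frac{\Psi_L(x_j)}{\Psi_L(0)} + \ve\rho_L^2\,\frac{\Psi_L(x_{j+1})}{\Psi_L(0)}\,\frac{h_{j+1}}{\bar h_{j}},
\]
a sum of three nonnegative terms. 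The decisive observation is that $\ve\rho_L^2=\gamma\alpha/(4\theta)$ controls the reaction-dominated case ($\theta=1$), while $\mu a\rho_L=a\gamma/2\ge\alpha\gamma/2$ controls the convection-dominated case ($\rho_L=\gamma/(2\mu)$); in either regime $L^{N}\psi(x_j)\ge C\,\Psi_L(x_j)/\Psi_L(0)\ge C e^{-\rho_L x_{j-1}}$. Hence $B(x_i):=C(N^{-1}\ln N)\,\psi(x_i)$ satisfies $L^{N}B\ge|\tau_i|$, $B(0)=0$, and $B(\sigma_L)\ge \tfrac12 C N^{-1}\ln N\ge|\eta(\sigma_L)|$, so the discrete comparison principle for $L^{N}$ on $[0,\sigma_L]$ yields $|\eta(x_1)|\le B(x_1)=C(N^{-1}\ln N)\bigl(1-(1+\rho_L h_L)^{-1}\bigr)\le C(N^{-1}\ln N)\,\rho_L h_L$, which is exactly the estimate needed in the first paragraph.

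The main obstacle is the uniform lower bound $L^{N}\psi\ge Ce^{-\rho_L x}$ across the whole parameter plane: the zeroth-order contribution $b\bigl(1-\Psi_L/\Psi_L(0)\bigr)$ degenerates near $x=0$, exactly where the bound is required, so one must fall back on either the convective term $\mu a\rho_L$ or the diffusive term $\ve\rho_L^2$, and each is bounded below by a positive constant only after separating the reaction- and convection-dominated cases. A secondary, but essential, point is confirming that the truncation estimate carries no hidden factor of $\theta$; this again hinges on the case-dependent identities for $\rho_L$, $h_L$, and $\mu/\sqrt{\ve\theta}$, and is where the dichotomy between the two problem classes makes itself felt.
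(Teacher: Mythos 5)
Your proposal is correct: the key identities all check out ($\sqrt{\ve\theta}\,\rho_L=\tfrac12\sqrt{\gamma\alpha}$, $\rho_L h_L=8N^{-1}\ln N$ under (\ref{assumption}), $\mu/\sqrt{\ve\theta}\le C$ in both regimes, $\ve\rho_L^2=\gamma\alpha/(4\theta)$, and $\rho_L=\gamma/(2\mu)$ when $\theta>1$), the pointwise truncation bound $|\tau_i|\le CN^{-1}\ln N\,e^{-\rho_L x_{i-1}}$ does follow from (\ref{TE1}) and Theorem \ref{boundwrightleft}(iv) on the uniform fine mesh, and the barrier verification closes since at interior fine-mesh points $h_j=h_{j+1}=h_L$ so the identities for $D^-\Psi_L$ and $\delta^2\Psi_L$ from Theorem \ref{t1} apply verbatim. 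Your skeleton coincides with the paper's (sharpen the nodal error at $x_1$ via a barrier vanishing at $x=0$, then divide by $h_L$), but the execution is genuinely different. The paper runs two separate comparison arguments with case-specific \emph{linear} barriers: $C\frac{x_i}{\mu}\Vert L^N(W_L-w_L)\Vert_{(0,\sigma_L)}+CN^{-2}$ for $\theta>1$, where positivity comes from the convective term, and $C\bigl(\frac{N^{-1}\ln N}{\beta}+\frac{x_i}{\sqrt{\ve}}\bigr)(N^{-1}\ln N)+CN^{-2}$ for $\theta=1$; both are driven by sup-norm truncation estimates over the layer region. You instead use a single discrete-exponential barrier $1-\Psi_L/\Psi_L(0)$ fed by pointwise decaying truncation bounds, with the reaction/convection dichotomy entering only in deciding which term of $L^N\psi$ ($\ve\rho_L^2$ or $\mu a\rho_L$) supplies the uniform lower bound $Ce^{-\rho_L x_{j-1}}$. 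Your route buys two things: uniformity (one barrier for the whole parameter plane, no factor $1/\mu$ anywhere) and a sharper intermediate nodal estimate --- since $\psi\le 1$, your comparison gives $|(W_L-w_L)(x_i)|\le CN^{-1}\ln N$ on all of $[0,\sigma_L]$, a $\ln N$ improvement over (\ref{NodalError-W_L}), whereas the paper's linear barriers grow to order $N^{-1}(\ln N)^2$ at $x_i=\sigma_L$ because $x_i/\sqrt{\ve\theta}\le C\ln N$ there. What the paper's route buys is lighter machinery: its barriers need only $L^N x_i\ge \mu a$, with no discrete identities for $\Psi_L$ and no pointwise exponential truncation estimates.
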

\begin{proof}
The proof splits into the two cases of $\theta > 1$ and $\theta=1$.

(i) In the convection-diffusion case of $\theta>1$, we introduce  the following linear discrete barrier function
$$
B(x_i):=C\frac{ x_i}{\mu}\Vert L^{N}(W_L-w_L)\Vert_{(0,\sigma_L)} +CN^{-2},
$$
so that $L^NB \geq C\Vert L^{N}(W_L-w_L)\Vert$. Note that this barrier function cannot be used in the reaction-diffusion case when $\theta =1$, as it involves the multiple $\mu ^{-1}$. Here $\Vert L^{N}(W_L-w_L)\Vert _{(0,\sigma_L)}$ is the truncation error associated with the left singular component $w_L$. In the boundary layer region $(0,\sigma_L)$, using (\ref{exp-bound-w}c) and the standard truncation error bounds (\ref{TE1}) we have that
$$
\Vert L^{N}(W_L-w_L)\Vert_{(0, \sigma_L)}\le C (\frac{1}{\theta}+\frac{\mu}{\sqrt{\ve\theta}}) N^{-1}\ln N.
$$
In addition,  by (\ref{exp-bound-w}a) and (\ref{wL-outside}) we can deduce that $(W_L-w_L)(0)=0$ and  $\vert (W_L-w_L)(\sigma _L) \vert \leq CN^{-2}$. From the discrete minimum principle, we then have that, for $x_i\in [0,\sigma_L]$,
\[
\vert (W_L-w_L)(x_i) \vert\le B(x_i) \leq C x_i(\frac{1}{\mu\theta}+ \frac{1}{\sqrt{\ve\theta}}) N^{-1}\ln N  +CN^{-2}
\]
and, in particular,
\[
\vert (W_L-w_L)(h_L) \vert\leq C h_L(\frac{1}{\mu\theta}+ \frac{1}{\sqrt{\ve\theta}}) N^{-1}\ln N +CN^{-2}.
\]
Therefore, when $\theta > 1$,
\begin{eqnarray*}
\sqrt{\ve\theta}\vert D^+(W_L-w_L)(0)\vert &=& \frac{\sqrt{\ve\theta}  \vert (W_L-w_L)(h_L) \vert}{h_{L}} \\
& \le&   C\sqrt{\ve\theta}(\frac{1}{\theta \mu}+ \frac{1}{\sqrt{\ve\theta}}) N^{-1}\ln N +CN^{-1} \\
&\le& C N^{-1}(\ln N).
\end{eqnarray*}

(ii) In the reaction-diffusion case, where  $\theta = 1$, we utilize the bound (\ref{wL-outside}) to allow us confine   the  truncation error estimate (\ref{TE1}) to the fine uniform mesh. For all mesh points $x_i \in (0,\sigma _L)$, this yields
\begin{eqnarray*}
&&\vert L^{N}(W_L-w_L) (x_i)\vert \leq \ve(h_L)^2||w_{L}^{(4)}||+\mu(h_{L})||w_{L}^{(2)}||\\
&&\leq C\left(\ve(\sqrt{\ve})^2(N^{-1}\ln N)^2\left(\frac{1}{\sqrt{\ve}}\right)^4+\mu(\sqrt{\ve})(N^{-1}\ln N)\left(\frac{1}{\sqrt{\ve}}\right)^2\right)\\
&&\leq C\left(N^{-1}\ln N+\frac{\mu}{\sqrt{\ve}}\right)(N^{-1}\ln N).
\end{eqnarray*}
Consider the following discrete barrier function
\[
C\left(\frac{N^{-1}\ln N}{\beta}+\frac{x_i}{\sqrt{\ve}}\right) (N^{-1}\ln N)+CN^{-2}
\]
and using the discrete minimum principle we get that
\[
\vert (W_L-w_L)(h_L) \vert \leq C\left(\frac{(N^{-1}\ln N)^2}{ \beta}+(N^{-1}\ln N)\frac{1}{\sqrt{\ve}}h_L\right) +CN^{-2}.
\]
Now we have, for the case when $\theta =1$,
\begin{eqnarray*}
\sqrt{\ve}\vert D^+(W_L-w_L)(0)\vert &=& \sqrt{\ve} \frac{\vert (W_L-w_L)(h_L)\vert }{h_L} \\
& \le&   C\sqrt{\ve}\left(\frac{(N^{-1}\ln N)^2}{\beta}+(N^{-1}\ln N)\frac{1}{\sqrt{\ve}}h_L\right)\frac{1}{h_L} +CN^{-1}\\
& \le&  CN^{-1}\ln N.
\end{eqnarray*}
Hence we have completed the proof for both $\theta >1$ and $\theta=1$.
\end{proof}
Note that by examining the bounds in the above Lemma, we have the nodal error bound
\begin{equation}\label{NodalError-W_L}
\vert (W_L-w_L)(x_i) \vert \leq C N^{-1} (\ln N)^2.
\end{equation}
\begin{theorem}\label{LWBound} Assume (\ref{assumption}).  We have the bounds
\begin{eqnarray*}
\sqrt{\ve\theta}|D^-(W_L-w_L)(x_i)| &\leq& CN^{-1}\ln N, \quad \hbox{ if} \quad  0 < x_i  \leq \sigma_L\\
|D^-(W_L-w_L)(x_i)| &\leq& CN^{-1}, \quad \hbox{ if} \quad  \sigma_L< x_i \leq 1-\sigma_R\\
\sqrt{\frac{\ve}{\theta}}|D^-(W_L-w_L)(x_i)| &\leq& CN^{-1}, \quad \hbox{ if} \quad   1-\sigma_R < x_i \leq 1;
\end{eqnarray*}
where $W_L$ is the  solution of (\ref{eqndwl}) and $w_L$ is the   solution of (\ref{eqnwl1}).
\end{theorem}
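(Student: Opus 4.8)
The plan is to treat the three regions of the theorem separately, exploiting the fact that both $W_L$ and $w_L$ are negligible (of size $O(N^{-2})$) once we leave the left layer, so that only the left-layer bound actually requires the modified flux operator $\hat L^N$ and its comparison principle.

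For the interior and right-layer regions ($x_i > \sigma_L$) I would argue by a direct difference-quotient estimate, without invoking (\ref{prob-deriv}) at all. By (\ref{wL-outside}) we have $|W_L(x_i)| \leq CN^{-2}$ for $x_i \geq \sigma_L$, while (\ref{exp-bound-w}b) gives $|w_L(x_i)| \leq Ce^{-\rho_L \sigma_L} = CN^{-2}$ there as well; hence $|(W_L-w_L)(x_i)| \leq CN^{-2}$ throughout $[\sigma_L,1]$. Forming $D^-(W_L-w_L)$ and inserting the mesh-width relations $H^{-1}\leq CN$ on the interior coarse mesh and $h_R^{-1}\leq C\rho_R N/\ln N$ on the right fine mesh then yields $|D^-(W_L-w_L)(x_i)|\leq CN^{-1}$ for $\sigma_L < x_i \leq 1-\sigma_R$, and, for $1-\sigma_R < x_i \leq 1$, the estimate $\sqrt{\ve/\theta}\,|D^-(W_L-w_L)(x_i)|\leq C(\ln N)^{-1}N^{-1}$, where the weight $\sqrt{\ve/\theta}$ exactly cancels the factor $\rho_R$ arising from $h_R^{-1}$. (The boundary node $x_i=1$ is covered since $(W_L-w_L)(1)=0$.)

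The left-layer region $0<x_i\leq\sigma_L$ is the core of the argument and here I would use the stability/consistency argument for $\hat L^N$. Restricting (\ref{prob-deriv}) to the \emph{uniform} fine mesh $(h_1,\sigma_L)$, the flux ${\cal W}_L^-:=D^-(W_L-w_L)$ satisfies $\hat L^N {\cal W}_L^-(x_i) = D^-{\cal T}_{W_L}(x_i) - (W_L-w_L)(x_{i-1})D^-b(x_i)$, with $\hat L^N$ of (\ref{mod-discrete-operator}) obeying a discrete comparison principle by (\ref{assum}). Working on $(h_1,\sigma_L)$ keeps every interior node on the uniform fine mesh, so that the sharp consistency bounds (\ref{trunc_x}c), (\ref{trunc_x}d) apply and the excluded transition point $\sigma_L$ only enters as a boundary value. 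The two boundary fluxes are supplied by Lemma \ref{l_bf_sc2} at $x_i=h_1$ and, at $x_i=\sigma_L$, by the layer-edge observation that $(W_L-w_L)(\sigma_L)$ and $(W_L-w_L)(\sigma_L-h_L)$ are both $O(N^{-2})$, so that their difference quotient is controlled in the $\sqrt{\ve\theta}$ scaling. For the right-hand side I would bound $D^-{\cal T}_{W_L}$ via (\ref{trunc_x}), feeding in the derivative bounds (\ref{boundwrightleft}iv) on $w_L$ and using $h_L\rho_L\leq CN^{-1}\ln N$ on the fine mesh, and bound the remaining term through the nodal estimate (\ref{NodalError-W_L}). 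A constant barrier $K:=CN^{-1}(\ln N)/\sqrt{\ve\theta}$ then dominates both the scaled boundary data and $\|R\|/(\gamma\alpha)$, and the comparison principle delivers $\sqrt{\ve\theta}\,|{\cal W}_L^-(x_i)|\leq CN^{-1}\ln N$.

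The step I expect to be the main obstacle is the sharp control of the right-hand side $R$ in the $\sqrt{\ve\theta}$-scaling, and specifically ensuring that only a single logarithmic factor survives. The term $(W_L-w_L)(x_{i-1})D^-b$, if bounded crudely by the global nodal estimate (\ref{NodalError-W_L}), carries a factor $(\ln N)^2$, whereas the stated bound permits only $\ln N$; closing this gap cleanly will require either a sharper, position-dependent bound on the nodal error inside the layer, or a linear (rather than constant) barrier analogous to the one used in Lemma \ref{l_bf_sc2}, together with careful bookkeeping of the mesh-width and weighting factors contributed by each truncation term. The remaining regions, by contrast, reduce to the elementary estimates described above.
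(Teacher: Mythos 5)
Your plan reproduces the paper's own proof of Theorem \ref{LWBound} essentially step for step: the regions $x_i>\sigma_L$ are handled there exactly as you describe (nodal smallness $CN^{-2}$ from (\ref{exp-bound-w}a) and (\ref{wL-outside}), combined with $H^{-1}\leq CN$ and with $\sqrt{\ve/\theta}\,h_R^{-1}\leq CN/\ln N$), the layer-edge flux at $\sigma_L$ is obtained from the $O(N^{-2})$ values at $\sigma_L-h_L$ and $\sigma_L$, and the left layer is treated by the comparison principle for $\hat L^N$ on $(h_1,\sigma_L)$ with a \emph{constant} barrier $N^{-1}\ln N/\sqrt{\ve\theta}$, the boundary flux of Lemma \ref{l_bf_sc2}, the truncation bounds (\ref{trunc_x}) and the derivative bounds of Theorem \ref{boundwrightleft}(iv).

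The step you flag as the ``main obstacle'' is, however, not an obstacle, and neither of your proposed remedies (a position-dependent nodal bound inside the layer, or a linear barrier) is needed. The comparison is made against the scaled barrier $C N^{-1}\ln N/\sqrt{\ve\theta}$, not against $CN^{-1}\ln N$: by (\ref{assum}) a constant $K$ satisfies $\hat L^N K=(b+\mu D^-a)K\geq \tfrac{\gamma\alpha}{2}K$, so the constant barrier dominates any right-hand side of size $CN^{-1}\ln N/\sqrt{\ve\theta}$. Now under (\ref{assumption}) we have $\sigma_L=\frac{2}{\rho_L}\ln N<\frac14$, i.e.\ $\rho_L>8\ln N$, which is equivalent to $\sqrt{\ve\theta}\leq C(\ln N)^{-1}$. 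Hence the crude bound $CN^{-1}(\ln N)^2$ on the term $(W_L-w_L)(x_{i-1})D^-b(x_i)$ coming from (\ref{NodalError-W_L}) is already absorbed:
\[
N^{-1}(\ln N)^2=\bigl(\sqrt{\ve\theta}\,\ln N\bigr)\cdot\frac{N^{-1}\ln N}{\sqrt{\ve\theta}}\leq \frac{C\,N^{-1}\ln N}{\sqrt{\ve\theta}} .
\]
This is precisely how the paper's proof disposes of that term (it is the otherwise unexplained ``$+\,CN^{-1}\ln N$'' appearing in its displayed truncation estimate), and since the theorem's left-layer bound carries the external weight $\sqrt{\ve\theta}$, only a single $\ln N$ survives. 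One small technical caution when you carry out the consistency estimate: bound $\ve\,\vert D^-(w_L''-\delta^2 w_L)\vert$ by the fourth-derivative form $C\ve h_L\Vert w_L^{(4)}\Vert\leq C N^{-1}\ln N/\sqrt{\ve\theta}$ rather than by $C\ve h_L^{2}\Vert w_L^{(5)}\Vert$; the latter carries an extra factor $\theta$ (through $1+\theta^{k-3}$ in Theorem \ref{boundwrightleft}(iv)) that is not uniformly controlled by $N^{-1}\ln N$. With that choice the entire right-hand side of the flux equation is $C N^{-1}\ln N/\sqrt{\ve\theta}$ and the constant barrier finishes the proof exactly as you outline.
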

\begin{proof}
Using the bounds (\ref{exp-bound-w}a) and (\ref{wL-outside}), respectively, on $w_L$ and $W_L$ we see that outside the left layer region
\[
|(W_{L}-w_{L})(x_i)|\leq CN^{-2}, \quad x_i \geq \sigma _L.
\]
Combining this bound  with the fact that $h_R = C\sqrt{\frac{\ve}{\theta}}N^{-1}\ln N$ we deduce that
\begin{eqnarray*}
|D^-(W_L-w_L)(x_i)| &\leq& CN^{-1} \quad \hbox{if} \quad  \sigma_L < x_i \leq 1-\sigma_R\\
\sqrt{\frac{\ve}{\theta}}|D^-(W_L-w_L)(x_i)| &\leq& CN^{-1} \quad \hbox{if} \quad x_i  > 1-\sigma_R.
\end{eqnarray*}
It remains to establish the bound in the left layer region, where the derivatives of the left boundary layer function $w_L$ are significant.
From (\ref{exp-bound-w}a) we have that
\begin{eqnarray*}
|w_L(\sigma_L-h_L)| \leq Ce^{\frac{\sqrt{\gamma \alpha}h_L}{ 2\sqrt{\ve \theta}}}e^{\frac{-\sqrt{\gamma \alpha}\sigma_L}{2 \sqrt{\ve \theta}}}\leq CN^{-2};
\end{eqnarray*}
and using Theorem 3, with $\rho _{L} := \sqrt{\frac{\gamma \alpha}{4\theta \ve }}$ it follows that
\begin{equation}
\mathopen|{W_{L}}(\sigma_L-h_L)\mathclose| \leq
C(1+\rho _{L}h_{L})(1+\rho _{L}h_{L})^{-\frac{N}{4}} \leq CN^{-2}.
\end{equation}
Repeat the earlier argument to get that
\[\sqrt{\ve \theta}\vert D^-(W_L-w_L) (\sigma _L)\vert\leq C\frac{\sqrt{\theta \ve}}{h_L}N^{-2} \leq CN^{-1}. \]
Using the truncation error bounds $\eqref{trunc_x}$ in the region $(0, \sigma_L)$ we have
\begin{eqnarray*}
 \vert \hat{L}^{N}D^-(W_L-w_L) (x_i)\vert&\leq& Ch_L(\ve h_L||w_L^{(5)}||+\mu||a_x||||w_L^{(2)}||+ \mu||a||||w^{(3)}_L||)\\ \quad &&+ \qquad CN^{-1}\ln N \\
&\leq& \frac{C}{\sqrt{\ve\theta}}N^{-1}\ln N.
\end{eqnarray*}
Complete the proof  using the discrete constant barrier function $\frac{N^{-1}\ln N }{\sqrt{\ve\theta}}$, Lemma 4, the lower bound $b> \gamma \alpha$ and the end-point bound
of
\[ \sqrt{\ve\theta} \vert  D^-(W_L-w_L) (\sigma _L) \vert \leq CN^{-1}.\]
\end{proof}

The analysis is more elaborate in the case of the right layer component $w_R$. We first need an appropriate  bound on the outgoing error flux $\vert D^-(W_R-w_R)(1)\vert $. We again achieve this by sharping the standard nodal error bound $\vert (W_R-w_R)(x_i) \vert \leq CN^{-1} \ln N$, within the layer region on the right, to reflect the fact that 
$(W_R-w_R)(1)=0$.

\begin{lemma} \label{l_bf_sc} Assume (\ref{assumption}).
 For  sufficiently large $N$,
\begin{equation}\label{outflow-bnd}
\sqrt{\frac{\ve}{\theta} }\vert D^-(W_R-w_R)(1) \vert \le C N^{-1} (\ln N)^2,
\end{equation}
where $W_R$ is the  solution of (\ref{eqndwr}) and $w_R$ is the solution of (\ref{eqnwr1}).
\end{lemma}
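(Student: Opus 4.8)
The plan is to mirror the structure of Lemma~\ref{l_bf_sc2}, but now exploiting the homogeneous \emph{outflow} condition $(W_R-w_R)(1)=0$ in place of the inflow condition at $x=0$. The first step is a reduction. Since $W_R(1)=w_R(1)=1$ we have $(W_R-w_R)(1)=0$, and because the right fine mesh is uniform with step $h_R=C\sqrt{\ve/\theta}\,N^{-1}\ln N$,
\[
\sqrt{\tfrac{\ve}{\theta}}\,\bigl|D^-(W_R-w_R)(1)\bigr|
=\sqrt{\tfrac{\ve}{\theta}}\,\frac{\bigl|(W_R-w_R)(1-h_R)\bigr|}{h_R}
=\frac{N}{C\ln N}\,\bigl|(W_R-w_R)(1-h_R)\bigr|.
\]
Hence the lemma is equivalent to the sharpened nodal estimate $|(W_R-w_R)(1-h_R)|\le C N^{-2}(\ln N)^3$ at the single mesh point $x_{N-1}=1-h_R$, a genuine sharpening of the generic nodal bound of order $N^{-1}(\ln N)^2$.

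Next I would split into the convection-dominated case $\theta>1$ and the reaction-dominated case $\theta=1$, as in Lemma~\ref{l_bf_sc2}. Writing $\zeta:=W_R-w_R$, we have $L^N\zeta=-(L^N-L_{\ve,\mu})w_R$, so $L^N\zeta$ is merely the consistency error of $w_R$. Using the derivative bounds (\ref{exp-bound-w}b) together with the truncation estimate (\ref{TE1}) (and, when $\theta=1$, confining the estimate to the uniform fine mesh after discarding the coarse region via (\ref{wR-outside}), exactly as in part~(ii) of Lemma~\ref{l_bf_sc2}), one obtains on the right fine mesh a bound of the form $|L^N\zeta(x_i)|\le C\theta\,N^{-1}\ln N\,e^{-\rho_R(1-x_i)}$. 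The crucial features here are, first, that this truncation error carries a large factor $\theta$ (unlike the left layer, where the analogous factor is $1/\theta$), and second, that it nonetheless inherits the exponential decay of $w_R$ towards the interior. At the interface $x=1-\sigma_R$ the bounds (\ref{exp-bound-w}a) and (\ref{wR-outside}) give $|\zeta(1-\sigma_R)|\le CN^{-2}$, while $\zeta(1)=0$.

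It then remains to construct a discrete barrier $B\ge0$ on $[1-\sigma_R,1]$ satisfying $L^NB\ge|L^N\zeta|$, $B(1-\sigma_R)\ge|\zeta(1-\sigma_R)|$, $B(1)\ge0$, and, decisively, $B(1-h_R)\le CN^{-2}(\ln N)^3$; the discrete minimum principle then delivers the required pointwise bound. This barrier construction is the main obstacle, and it is precisely where the outflow layer departs from the inflow analysis of Lemma~\ref{l_bf_sc2}. The simple linear barrier $C(x_i/\mu)\Vert\cdot\Vert+CN^{-2}$ used there relied on the convective term $\mu a D^-$ having the favourable sign; at the outflow boundary any barrier decaying towards $x=1$ renders $\mu a D^-B$ negative, so that device fails outright. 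Moreover, the natural candidate built from the discrete layer function $\Psi_R$ does not vanish at $x=1$ and so cannot exploit $\zeta(1)=0$, whereas the exponential $e^{-\rho_R(1-x)}$ matching the decay of $L^N\zeta$ is nearly resonant with the layer rate $\rho_R\approx\mu a/\ve$. I therefore expect the effective barrier to be a carefully weighted combination of a term capturing the decaying (order $\theta$) truncation error and a layer-type correction that forces $B(1)=O(N^{-2})$; balancing these two contributions so that the factor $\theta$ is absorbed by the step-sized factor $\rho_R h_R\sim N^{-1}\ln N$ at $x_{N-1}$ is what degrades the estimate to the stated $(\ln N)^2$ (rather than $\ln N$). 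The reaction-dominated case $\theta=1$ should be easier, since there the convective term is negligible, the operator is dominated by $-\ve\delta^2+b$ with $b\ge\gamma\alpha$, and a reaction-diffusion-type barrier analogous to part~(ii) of Lemma~\ref{l_bf_sc2} suffices.
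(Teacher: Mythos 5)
Your reduction is sound and matches the paper's starting point: since $(W_R-w_R)(1)=0$ and $h_R=C\sqrt{\ve/\theta}\,N^{-1}\ln N$ under (\ref{assumption}), the lemma is indeed equivalent to the single-node estimate $\vert (W_R-w_R)(1-h_R)\vert \leq CN^{-2}(\ln N)^3$, and your case split and truncation-error estimates are essentially those of the paper. But the proof has a genuine gap exactly where you flag "the main obstacle": you never construct the barrier, you only state the properties it must have and then say you "expect" it to be a weighted combination of a decaying term and a layer-type correction. That expectation is the entire content of the lemma, and the form you gesture at is not the one that works. The paper's device is an auxiliary \emph{discrete} function $\psi$ solving the constant-coefficient problem $-\ve\delta^2\psi+\sqrt{\ve\theta}\,A\,D^-\psi=0$ on $(1-\sigma_R,1)$ with $\psi(1-\sigma_R)=1$, $\psi(1)=0$ and $A\geq \Vert a \Vert\sqrt{\gamma/\alpha}$, which is explicitly a discrete exponential $\psi(x_i)=\bigl(1-(1+\rho)^{i-N}\bigr)/\bigl(1-(1+\rho)^{-N/4}\bigr)$, $\rho=\sqrt{\theta/\ve}\,Ah_R$. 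The composite function $x_i-1+\sigma_R\psi(x_i)$ then vanishes at \emph{both} ends of the layer region and satisfies
\begin{equation*}
L^{N}\bigl(x_i-1+\sigma_R\psi(x_i)\bigr)\;\geq\; \mu a(x_i)+\sigma_R\bigl(\mu a(x_i)-\sqrt{\ve\theta}\,A\bigr)D^-\psi(x_i)\;\geq\;\mu\alpha ,
\end{equation*}
because $D^-\psi<0$ and $\sqrt{\ve\theta}\,A\geq \mu\Vert a \Vert$ (this last inequality is exactly where the definition of $\theta$ enters). This resolves, in one stroke, the sign problem you correctly identified — the decreasing part $\sigma_R\psi$ has its adverse convection contribution over-compensated by its own discrete diffusion, while the increasing linear part supplies the positive $\mu\alpha$ — and simultaneously gives the smallness you need at the last interior node: $\vert x_{N-1}-1+\sigma_R\psi(1-h_R)\vert\leq Ch_R\ln N$, which is the source of the extra logarithm in $(\ln N)^2$.

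Two further points where your outline diverges from what is actually needed. First, the exponential decay $e^{-\rho_R(1-x_i)}$ of the truncation error, which you treat as a "crucial feature", plays no role in this lemma: the paper uses only the uniform bound $\Vert L^{N}(W_R-w_R)\Vert_{(1-\sigma_R,1)}\leq C(\theta+\mu\sqrt{\theta/\ve})N^{-1}\ln N$; the pointwise decay is exploited later (in Theorems \ref{RWBound} and \ref{NodalBound}), not here. Second, your claim that the case $\theta=1$ "should be easier" because "a reaction-diffusion-type barrier analogous to part (ii) of Lemma \ref{l_bf_sc2} suffices" does not hold up: any admissible barrier must be $O(N^{-2})$ at $x=1$, so near $x=1$ the reaction term $b\cdot B$ is itself vanishingly small and cannot absorb the $O(\mu/\sqrt{\ve})$ negative convection contribution of a barrier decreasing towards $1$ (this is precisely why the left-layer barrier $x_i/\sqrt{\ve}$ of Lemma \ref{l_bf_sc2} has no naive mirror image here). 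Accordingly, the paper's proof for $\theta=1$ still requires the same correction term $x_i-1+\sigma_R\psi(x_i)$, inside the barrier (\ref{defBnew}); the outflow difficulty is structural and does not disappear in the reaction-dominated regime.
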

\begin{proof}  Consider the discrete function $\psi(x_i)$ defined  by
\begin{eqnarray*}
-\ve \delta^2 \psi +  \sqrt{\ve \theta} A D^- \psi=0, \ x_i\in (1-\sigma _R,1), \\
\psi(1-\sigma _R)=1, \ \psi(1)=0; \qquad A \geq \Vert a \Vert \sqrt{\frac{\gamma}{\alpha}}.
\end{eqnarray*}
Observe that
\[
\psi (x_i) = \frac{1 - (1+\rho)^{i-N} }{1 - (1+\rho)^{-N/4}}, \qquad \hbox{where} \ \rho := \sqrt{\frac{ \theta}{\ve}} A h _R.
\]
Note also that
\[
D^- \psi (x_i) < 0 \qquad \hbox{and} \quad (1+\rho)^{-N/4} \leq (1+\frac{4\ln N}{N})^{-N/4} \leq CN^{-1}.
\]
Hence $\psi (x_i) \leq C(1 - (1+\rho)^{i-N} )$ for $N$ sufficiently large. Now we define a barrier function to deduce appropriate bounds for ${\cal W}^-_{N}$. First, we note that
\[
L^{N}(x_i-1+\sigma _R \psi(x_i))\geq \mu a(x_i) + \sigma _R (\mu a(x_i) - \sqrt{\ve \theta} A) D^- \psi(x_i) \geq \mu \alpha .
\]

(i) When $\theta >1$, define  the following discrete barrier function
\begin{equation}\label{defB}
B(x_i):=C\mu ^{-1}\Vert L^{N}(W_R-w_R)\Vert_{(1-\sigma _R)} \bigl(x_i-1+\sigma _R\psi(x_i) \bigr) +CN^{-2},
\end{equation}
where $L^{N}(W_R-w_R)$ is the truncation error associated with the singular component $w_R$. In the boundary layer region
$$
\Vert L^{N}(W_R-w_R)\Vert_{(1-\sigma _R, 1)}\le C (\theta+\mu \sqrt{\frac{\theta}{\ve}}) N^{-1}\ln N.
$$
Using the discrete maximum principle we then have that, for $x_i\in [1-\sigma _R,1]$
\[
\vert (W_R-w_R)(x_i) \vert\le B(x_i) \leq C (\frac{\theta}{\mu} + \sqrt{\frac{\theta}{\ve}}) N^{-1}\ln N\bigl(x_i-1+\sigma _R\psi(x_i) \bigr) +CN^{-2}
\]
and
\[
\vert (W_R-w_R)(1-h_R) \vert\leq C h_R(\frac{\theta}{\mu} + \sqrt{\frac{\theta}{\ve}}) N^{-1}(\ln N)^2 +CN^{-2}.
\]
Therefore, when $\theta > 1$,
\begin{eqnarray*}
\sqrt{\frac{\ve}{\theta} } \vert {\cal W}^-_{N}\vert &=& \frac{\sqrt{\ve}}{h_R\sqrt{\theta} } \vert (W_R-w_R)(1-h_R) \vert \\
& \le&  C\frac{\sqrt{\ve}}{\sqrt{\theta} } (\frac{\theta}{\mu} + \sqrt{\frac{\theta}{\ve}}) N^{-1}(\ln N)^2 +CN^{-1} \\
&\le& C N^{-1}(\ln N)^2.
\end{eqnarray*}

(ii) In the other case, where $\theta =1$, we can use the  truncation error bound (\ref{TE1}) in the boundary layer region $(1-\sigma _R,1)$,
\begin{eqnarray*}
\vert L^N(W_R-w_R)(x_i) \vert &\leq& C\left(\ve(h_R)^2||w_{R}^{(4)}||+\mu(h_{(R)})||w_{R}^{(2)}||\right), \quad x_i \in (1-\sigma _R,1)\\
&\le& C ( N^{-1}\ln N+ \frac{\mu }{\sqrt{\ve}}) N^{-1}\ln N.
\end{eqnarray*}
Using the barrier function
\begin{equation}\label{defBnew}
C\left(\frac{(N^{-1}\ln N)^2}{\beta}+{\frac{N^{-1}\ln N}{\sqrt{\ve}}} \bigl(x_i-1+\sigma _R\psi(x_i) \bigr)\right),
\end{equation}
we get
\[
\vert (W_R-w_R)(x_i) \vert\le C\frac{ ( N^{-1}\ln N)^2}{\beta} +\bigl(x_i-1+\sigma _R\psi(x_i) \bigr) \frac{N^{-1}\ln N}{\sqrt{\ve}},
\]
which yields the required result for the case of $\theta =1$.
\end{proof}
In passing, we note  that the nodal error bound
\begin{equation}\label{NodalError-W_R}
\vert (W_R-w_R)(x_i) \vert \leq C N^{-1} (\ln N)^2,
\end{equation}
follows from the bounds established in the above Lemma
\begin{theorem}\label{RWBound} Assume (\ref{assumption}). We have the bounds
\begin{eqnarray*}
\sqrt{\ve\theta}|D^-(W_R-w_R)(x_i)| &\leq& CN^{-1}, \quad \hbox{if} \quad  x_i  \leq \sigma_L\\
|D^-(W_R-w_R)(x_i)| &\leq& CN^{-1}, \quad \hbox{if} \quad  \sigma_L< x_i \leq 1-\sigma_R\\
\sqrt{\frac{\ve}{\theta}}|D^-(W_R-w_R)(x_i)| &\leq& CN^{-1}(\ln N)^2, \quad \hbox{if} \quad  x_i  > 1-\sigma_R;
\end{eqnarray*}
where $W_R$ is the   solution of (\ref{eqndwr}) and $w_R$ is the   solution of (\ref{eqnwr1}).
\end{theorem}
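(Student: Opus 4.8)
The plan is to follow the template of Theorem~\ref{LWBound}: estimate $D^-(W_R-w_R)$ directly in the two regions lying outside the right layer, and treat the layer region $x_i>1-\sigma_R$ by a stability-and-consistency argument for the flux operator $\hat L^N$ of (\ref{mod-discrete-operator}), using the outgoing flux bound of Lemma~\ref{l_bf_sc} at $x_i=1$ as one of the two boundary data.

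First I would dispatch the regions $x_i\le 1-\sigma_R$. There the continuous bound (\ref{exp-bound-w}a) and the discrete bound (\ref{wR-outside}) both give $|(W_R-w_R)(x_i)|\le CN^{-2}$, so forming the backward difference and dividing by the local step width settles the first two estimates: on the left fine mesh $h_i=h_L=C\sqrt{\ve\theta}\,N^{-1}\ln N$, whence $\sqrt{\ve\theta}\,|D^-(W_R-w_R)|\le C N^{-2}/(N^{-1}\ln N)\le CN^{-1}$; on the coarse mesh $h_i=H\ge CN^{-1}$, whence $|D^-(W_R-w_R)|\le CN^{-1}$ with no weighting. The same computation at the transition point supplies the incoming datum $\sqrt{\ve/\theta}\,|D^-(W_R-w_R)(1-\sigma_R)|\le CN^{-1}$ needed below.

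For the layer region itself, the error flux ${\cal W}_R^-:=D^-(W_R-w_R)$ obeys, by (\ref{prob-deriv}),
\[
\hat L^N{\cal W}_R^-(x_i)=D^-\bigl(L^N(W_R-w_R)\bigr)(x_i)-(W_R-w_R)(x_{i-1})\,D^-b(x_i).
\]
I would bound the first term on the right by applying the uniform-mesh consistency estimates (\ref{trunc_x}c), (\ref{trunc_x}d) to the derivatives of $w_R$ furnished by (\ref{exp-bound-w}b); the lower-order term is controlled by the nodal bound (\ref{NodalError-W_R}), contributing at most $CN^{-1}(\ln N)^2$. With the two boundary fluxes in hand, Lemma~\ref{l_bf_sc} at $x_i=1$ and the previous paragraph at $x_i=1-\sigma_R$, it remains to choose a barrier for $\hat L^N$ and invoke its comparison principle.

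The choice of barrier is where the reaction/convection dichotomy resurfaces, and this is the step I expect to be the main obstacle. When $\theta=1$ the reaction lower bound $b>\gamma\alpha$ guaranteed by (\ref{assum}) lets a constant barrier of size $C\ve^{-1/2}N^{-1}\ln N$ suffice, exactly as on the left, and after weighting by $\sqrt{\ve/\theta}=\sqrt\ve$ this yields $CN^{-1}\ln N$. When $\theta>1$ a constant barrier fails, since the consistency error scales like $\ve^{-1/2}\theta^{3/2}N^{-1}\ln N$ and the weight $\sqrt{\ve/\theta}$ then leaves a surplus factor $\theta$. The remedy, mimicking Lemma~\ref{l_bf_sc}, is the $\mu^{-1}$-weighted barrier $C\mu^{-1}T\,(x_i-1+\sigma_R\psi(x_i))$, where $T$ bounds the right-hand side of the flux equation and $\psi$ is the decaying discrete function of that lemma, for which $\hat L^N(x_i-1+\sigma_R\psi)\ge\mu\alpha$; here one uses that $\hat\delta^2=\delta^2$ on the uniform fine mesh, so the $\psi$ built in Lemma~\ref{l_bf_sc} for $\delta^2$ serves unchanged and only the shifted coefficients $a(x_{i-1})$, $b+\mu D^-a$ distinguish $\hat L^N$ from $L^N$. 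This barrier carries the factor $\sigma_R\mu^{-1}=C\sqrt{\ve/\theta}\,\mu^{-1}\ln N$, which equals $C\theta^{-1}\ln N$ once $\mu^2=\gamma\ve\theta/\alpha$ is inserted, and this precisely absorbs the surplus $\theta$, producing the advertised $CN^{-1}(\ln N)^2$ after scaling. The residual care is bookkeeping at the mesh transition $x=1-\sigma_R$, where (\ref{trunc_x}c), (\ref{trunc_x}d) do not apply verbatim and the coarse neighbour must be absorbed through the smallness of $w_R$ there, together with verifying the inequality $\hat L^N(x_i-1+\sigma_R\psi)\ge\mu\alpha$ for the shifted coefficients of $\hat L^N$.
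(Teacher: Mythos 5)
Your overall strategy coincides with the paper's: nodal bounds plus mesh widths dispatch $x_i\le 1-\sigma_R$, and the layer region is handled by a stability--consistency argument for $\hat L^N$ applied to ${\cal W}_R^-$, with Lemma~\ref{l_bf_sc} supplying the outflow datum, a constant barrier when $\theta=1$, and a convection-adapted barrier when $\theta>1$. Your bubble barrier $C\mu^{-1}T(x_i-1+\sigma_R\psi(x_i))$ for $\theta>1$ differs from the paper's choice (the paper uses a decaying discrete exponential $\sqrt{\theta/\ve}\,N^{-1}\ln N\,(1+(1+h_R\rho_R)^{i+1-N})$ matched to the exponential decay of the truncation error), but your bookkeeping $\sigma_R\mu^{-1}\le C\theta^{-1}\ln N$ is correct and would deliver the same $CN^{-1}(\ln N)^2$ at the genuinely uniform interior points.

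The gap is your choice of domain: you take $(1-\sigma_R,1)$ with boundary data at $1-\sigma_R$ and $1$, so the point $x_i=1-\sigma_R+h_R$ becomes an \emph{interior} point of the comparison argument, and there both halves of your argument fail. First, the consistency estimate: $\hat L^N{\cal W}_R^-$ at that point contains $D^-{\cal T}$, hence the transition-point truncation error ${\cal T}(1-\sigma_R)$ divided by $h_R$; even after exploiting $|w_R|\le CN^{-4}$ near the transition, this term is of order $\theta^2\ve^{-1}N^{-4}/\ln N$, which carries a factor $\sqrt{\theta/\ve}=\rho_R/\sqrt{\gamma\alpha}$ that is \emph{not} bounded by any power of $N$ (take $\ve\to 0$ with $N$ fixed); so "smallness of $w_R$" cannot absorb the coarse neighbour, and this quantity is not dominated by $C\alpha T$ with $T\approx\theta^{3/2}\ve^{-1/2}N^{-1}\ln N$. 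Second, the barrier inequality: at $x_i=1-\sigma_R+h_R$ one has $\hbar_{i-1}=(H+h_R)/2\ne h_R$ in (\ref{hat-delta}), and $-\ve\hat\delta^2$ applied to the \emph{linear} part of your bubble equals $-\frac{\ve}{h_R}\frac{H-h_R}{H+h_R}\approx-\ve/h_R\approx-C\mu N/\ln N$ (for $\theta>1$), which swamps the $+\mu\alpha$ you need; so $\hat L^N(x_i-1+\sigma_R\psi)\ge\mu\alpha$ is false precisely at that point (the constant barrier in the case $\theta=1$ fails there for the analogous reason). The paper's proof avoids this entirely: it bounds the flux \emph{directly} at $x_i=1-\sigma_R+h_R$ and $1-\sigma_R+2h_R$ from the nodal bounds $|W_R(x_i)|\le CN^{-2}$ (Theorem~\ref{t1}) and $|w_R(x_i)|\le CN^{-4}$, divided by $h_R$ and using $\sqrt{\ve/\theta}\,h_R^{-1}=\sqrt{\gamma\alpha}\,N/(16\ln N)$ to get $\sqrt{\ve/\theta}\,|{\cal W}_R^-|\le CN^{-1}$ there, and only then runs the comparison argument on $(1-\sigma_R+h_R,1)$, where the mesh is uniform, $\hat\delta^2=\delta^2$, and (\ref{trunc_x}c), (\ref{trunc_x}d) apply. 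With that one repair -- shrink the domain and replace your incoming datum at $1-\sigma_R$ by direct flux bounds at the first fine-mesh points -- your argument goes through.
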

\begin{proof}
Using the bounds (\ref{exp-bound-w}a) and (\ref{wR-outside}) on $w_R$ and $W_R$, we see that outside the layer region $(1-\sigma_R,1)$ we have
\[
\vert (W_{R}-w_{R})(x_i) |\leq CN^{-2},\quad  0 \leq x_i \leq 1-\sigma_R.
\]
Using this bound along with the mesh step $h_L = C\sqrt{{\ve}{\theta}}N^{-1}\ln N$, we deduce that
\begin{eqnarray*}
\sqrt{\ve \theta}|D^{-}(W_R-w_R)(x_i)| &\leq& CN^{-1}, \quad \hbox{if} \quad  x_i  \leq \sigma_L\\
|D^-(W_R-w_R)(x_i)| &\leq& CN^{-1}, \quad \hbox{if} \quad   \sigma_L < x_i \leq 1-\sigma_R.
\end{eqnarray*}
When $x_i = 1-\sigma_R+h_R, 1-\sigma_R+2h_R$ we also have
\begin{eqnarray*}
|w_R(x_i)| \leq Ce^{\frac{2\sqrt{\gamma \alpha \theta}h_R}{ \sqrt{\ve }}}e^{-\frac{\sqrt{\gamma \alpha \theta}\sigma_R}{\sqrt{\ve}}}\leq CN^{-4}
\end{eqnarray*}
and using Theorem 3, with $\rho _{R} := \sqrt{\frac{\theta \gamma \alpha}{\ve }}$ we have
\begin{equation}
\mathopen|{W_{R}}(x_{i})\mathclose| \leq
C(1+0.5\rho_{R}h_{R})^2(1+0.5\rho_{R}h_{R})^{-\frac{N}{4}} \leq CN^{-2}.
\end{equation}
We therefore have established that
\[\sqrt{\frac{\ve}{ \theta}}\vert {\cal W_R}^-\vert \leq CN^{-1}, \qquad x_i  = 1-\sigma_R+h_R, 1-\sigma_R+2h_R. \]

In the region $(1-\sigma_R+h_R, 1)$, using the truncation error bounds \eqref{trunc_x} we have
\begin{eqnarray*}
\hat{L}^{N}{\cal W_R^-} &\leq& Ch_R\left(\ve h_R||w_R^{(5)}||_{(x_{i-2},x_{i+1})}+\mu||a_x||||w_R''||_{(x_{i-1},x_i)}+ \mu||a||||w^{(3)}_R||_{(x_{i-2},x_i)}\right) \\
&+& CN^{-1}\ln N.
\end{eqnarray*}
Using the exponential bounds in Theorem  \ref{boundwrightleft} we see that
\begin{eqnarray*}
\hat{L}^{N,M}{\cal W_R^-} &\leq& C\sqrt{\frac{\ve}{\theta}}N^{-1}\ln N\left(\ve\sqrt{\frac{\ve}{\theta}}\left(\sqrt{\frac{\theta}{\ve}}\right)^5+\mu\left(\sqrt{\frac{\theta}{\ve}}\right)^3\right)e^{-\sqrt{\frac{\gamma\alpha\theta}{\ve}}(1-x)} \\ && \quad + \quad CN^{-1}\ln N\\
&\leq& C\sqrt{\frac{\theta}{\ve}}N^{-1}\ln N\left(\theta+\sqrt{\frac{\mu^2\theta}{\ve}}\right)e^{-\sqrt{\frac{\gamma\alpha\theta}{\ve}}(1-x_{i+1})}+CN^{-1}\ln N.
\end{eqnarray*}
In the case of $\theta = 1$, this truncation error bound simplifies to 
 \begin{eqnarray*}
\vert \hat{L}^{N}{\cal W_R^-} \vert
&\leq& C\frac{N^{-1}\ln N}{\sqrt{\ve}}, \quad \hbox{if} \quad \theta =1,
\end{eqnarray*}
and the result follows using  a constant discrete barrier function.

When $\theta>1$,  the truncation error bound is of the form
\begin{eqnarray*}
\vert \hat{L}^{N}{\cal W_R^-} (x_i) \vert &\leq& C\sqrt{\frac{\theta}{\ve}}N^{-1}\ln N\theta e^{-\sqrt{\frac{\gamma\alpha\theta}{\ve}}(1-x_{i+1})}+CN^{-1}\ln N.
\end{eqnarray*}
 Consider the discrete barrier function
\[
\sqrt{\frac{\theta}{\ve}}N^{-1}\ln N(1+(1+h_R\sqrt{\frac{\gamma\alpha\theta}{\ve}})^{i+1-N})
\]
and use the strict inequality $a(x)>\alpha$ and $(1+t)^{-1} \geq e^{-t}$  to get the required result.


\end{proof}

We next move onto the analysis of the error associated with the regular component.

\begin{lemma} For the discrete regular component $V$ and the continuous regular component $v$ we have the bound
\[
\vert D^+(V-v)(0) \vert \le C N^{-1}, \quad
\sqrt{\frac{\ve}{\theta}}  \vert D^-(V-v)(1) \vert \le C N^{-1}.
\]
\end{lemma}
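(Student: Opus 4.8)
The plan is to set $E := V - v$ and exploit that, by (\ref{eqndv}) and (\ref{eqnv1}), $E$ solves $L^N E(x_i) = (L^N - L_{\ve,\mu})v(x_i) =: -\mathcal{T}_v(x_i)$ with $E(0) = E(1) = 0$. The two quantities sought are $D^+E(0) = E(h_L)/h_L$ and $D^-E(1) = -E(1-h_R)/h_R$, so it suffices to bound $E$ sharply at the single mesh points $x_1 = h_L$ and $x_{N-1} = 1-h_R$ adjacent to the boundaries and then divide by the fine mesh widths. This mirrors the proofs of Lemmas \ref{l_bf_sc2} and \ref{l_bf_sc} for the singular components, the essential simplification being that, by (\ref{boundvf21}), $\vert v\vert_k \leq C$ for $0\leq k\leq 3$, so $\mathcal{T}_v$ is much smaller than the corresponding truncation errors of $w_L, w_R$ and no preliminary sharpening of a crude nodal bound is required.

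First I would estimate $\mathcal{T}_v$ from the consistency bounds (\ref{TE1})--(\ref{TE2}). Using $\vert v\vert_2,\vert v\vert_3\leq C$ and $\vert v\vert_4\leq C(1+\sqrt{\theta/\ve})$ from (\ref{boundvf21}), together with the identity $\ve\theta\leq\alpha/\gamma$ and the fine-mesh widths $h_L = C\sqrt{\ve\theta}\,N^{-1}\ln N$, $h_R = C\sqrt{\ve/\theta}\,N^{-1}\ln N$, the dominant contribution on each fine mesh is the first-order upwind term of size $C\mu h$, while in the coarse mesh $\vert\mathcal{T}_v\vert\leq CN^{-1}$. Feeding the resulting global bound $\Vert\mathcal{T}_v\Vert\leq C\mu N^{-1}\ln N$ (when $\theta>1$), respectively $\Vert\mathcal{T}_v\Vert\leq C\sqrt{\ve}\,N^{-1}$ (when $\theta=1$), into a constant-barrier comparison based on the coercivity $b\geq\gamma\alpha$ from (\ref{cp1}c) yields the parameter-weighted nodal bound $\Vert E\Vert\leq C\mu N^{-1}\ln N$ (resp.\ $C\sqrt{\ve}\,N^{-1}$), which controls the outer values $E(\sigma_L)$ and $E(1-\sigma_R)$.

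For the left flux I would split, as in Lemma \ref{l_bf_sc2}, into the convection case $\theta>1$ and the reaction case $\theta=1$. When $\theta>1$ the barrier $B(x_i) = C\mu^{-1}\Vert\mathcal{T}_v\Vert_{(0,\sigma_L)}\,x_i + (\text{a term carrying }E(\sigma_L))$ is admissible, since $L^N(x_i/\mu)\geq a\geq\alpha$ and $L^N x_i\geq 0$; the discrete minimum principle then bounds $\vert E(h_L)\vert$, and division by $h_L$ gives the estimate for $D^+E(0)$. When $\theta=1$ the multiplier $\mu^{-1}$ is inadmissible because $\mu$ may vanish, so I would instead confine the truncation estimate to the uniform fine mesh and balance it against the reaction term $b$, exactly as in case (ii) of Lemma \ref{l_bf_sc2}. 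The right flux is treated identically, now with the barrier $x_i - 1 + \sigma_R\psi(x_i)$ from Lemma \ref{l_bf_sc} (for which $L^N(x_i-1+\sigma_R\psi)\geq\mu\alpha$); bounding $\vert E(1-h_R)\vert$, dividing by $h_R$ and multiplying by the weight $\sqrt{\ve/\theta}$ then produces the stated bound for $\sqrt{\ve/\theta}\,\vert D^-E(1)\vert$.

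The main obstacle is, once again, the reaction-dominated regime $\theta=1$: the convenient $\mu^{-1}$ barrier degenerates as $\mu\to 0$, so the bound must rest on the reaction term alone, and the barrier has to vanish at the boundary while still dominating a truncation error that is essentially constant across the fine mesh. The accompanying quantitative difficulty is to arrange the barriers so that the logarithmic factor carried by $h_L, h_R\sim N^{-1}\ln N$ does not survive the division by $h_L$, $h_R$: this forces one to use the genuine smallness of $\Vert\mathcal{T}_v\Vert$ (the factor $\mu$, respectively the weight $\sqrt{\ve/\theta}$) in place of crude magnitude bounds, and to check that the outer value $E(\sigma_L)$ (resp.\ $E(1-\sigma_R)$) is transported back to $x=h_L$ (resp.\ $x=1-h_R$) only through the exponentially suppressed growing mode, so that its contribution to the flux collapses to $CN^{-1}$ rather than the worst case suggested by a purely linear barrier.
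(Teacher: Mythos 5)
Your overall skeleton (truncation bounds, discrete barriers, then dividing a sharp bound at the first interior node by the fine mesh width) matches the paper's strategy, and the left endpoint can be completed along your lines (the paper's own version is in fact simpler: one linear barrier $C_1(\frac{\ve}{\mu}+1)N^{-1}x_i$ applied on all of $[0,1]$, so that the zero boundary values of $E=V-v$ are used at both ends and no interior value is ever carried). The genuine gap is at the right endpoint, which is where the whole difficulty of this lemma sits: your key claim there is asserted rather than proved, and in the form you state it, it is false. By localizing to $[1-\sigma_R,1]$ you must transport $E(1-\sigma_R)$ to $x=1-h_R$, and your stated mechanism is that this happens ``only through the exponentially suppressed growing mode.'' But for $\theta>1$ the mode of $L^N$ that decays towards $x=1$ has rate $\rho_L=\gamma/(2\mu)$, while $\sigma_R=\frac{4}{\rho_R}\ln N$ and $\rho_L/\rho_R=1/(2\theta)$, so the available suppression factor across the layer is $e^{-\rho_L\sigma_R}=N^{-2/\theta}$, which tends to $1$ as $\ve\to0$ with $\mu$ fixed. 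Hence a monotone decaying transport barrier (the natural reading of your claim) provably fails: it contributes $\sqrt{\ve/\theta}\,|E(1-\sigma_R)|\,N^{-2/\theta}/h_R$ to the scaled flux, and since $\sqrt{\ve/\theta}/h_R=\frac{\sqrt{\gamma\alpha}}{16}\,N/\ln N$ while $|E(1-\sigma_R)|$ is genuinely of order $N^{-1}$ for a first-order scheme, this term is of order $N^{-2/\theta}/\ln N\geq C/\ln N$ once $\theta\geq \ln N$ --- nowhere near $CN^{-1}$. The only way out is to exploit $E(1)=0$ and transport through the local discrete homogeneous solution $Z$ (with $L^NZ=0$, $Z(1-\sigma_R)=1$, $Z(1)=0$); but then the estimate you need is exactly a bound on its scaled discrete outflow slope, $\sqrt{\ve/\theta}\,|D^-Z(1)|\leq C$, a nontrivial product/telescoping computation that your proposal nowhere supplies.

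That missing computation is the heart of the paper's proof, which is organised precisely so that no transport is needed: the comparison is done on the whole of $[0,1]$ with the barrier $C_1(\frac{\ve}{\mu}+1)N^{-1}\bigl(x_i-1+\tilde\psi(x_i)\bigr)$, where $\tilde\psi$ solves the auxiliary discrete problem (\ref{eqnpsi}) posed on the entire mesh. This barrier vanishes at both $x=0$ and $x=1$, dominates the truncation error on coarse and fine mesh alike, and the decisive step is the separate telescoping estimate $|D^-\tilde\psi(1)|\leq C\sqrt{\theta/\ve}$ (the bound on $F_N$ in Appendix B), which is what converts the comparison into $\sqrt{\ve/\theta}\,|D^-(V-v)(1)|\leq CN^{-1}$ with no logarithm. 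Note also that the specific local barrier you invoke, $x_i-1+\sigma_R\psi(x_i)$ from Lemma \ref{l_bf_sc}, cannot deliver the stated rate even where transport is harmless (e.g.\ $\theta=1$, where $e^{-\rho_L\sigma_R}\leq CN^{-2}$): its discrete slope at $x=1$ is $1+\sigma_R|D^-\psi(1)|\approx\sigma_R A\sqrt{\theta/\ve}=C\ln N$, so your argument would at best give $CN^{-1}\ln N$ --- a loss that is tolerable for $W_R$ in Lemma \ref{l_bf_sc}, whose target is $CN^{-1}(\ln N)^2$, but falls a logarithm short of the present lemma; the paper's $\theta=1$ case instead uses the wedge-plus-$\tilde\psi$ barrier $B_4$ of Appendix B.
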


\begin{proof} The proof is given in  Appendix B. \end{proof}

Within the  proof of Lemma 8, one can see that we have established the nodal error bound $\Vert V-v \Vert \leq CN^{-1}x_i + CN^{-2}$. Using the corresponding earlier bounds on the nodal error on the layer components,  we now have the parameter-uniform nodal error bound
\begin{equation}\label{NodalError}
\Vert U-u \Vert _{\Omega ^N}\leq CN^{-1}(\ln N)^2.
\end{equation}

In the next Theorem, the definition of $\tilde \delta ^2$ comes into play into the numerical analysis for the first time, as the consistency bound is derived over the entire (non-uniform) mesh. The use of the operator  $\tilde \delta ^2$ results in isolated jumps in the truncation error at the four mesh points $\sigma _L, \sigma _L +H, 1- \sigma _R+h_R, 1-\sigma _R$.

\begin{theorem}\label{RegBound}
 Assume (\ref{assumption}). We have
\begin{eqnarray*}
\vert D^- (V-v)(x_i) \vert &\le& C N^{-1}, \quad  \hbox{if }\ x_i \leq  \sigma _L,\\
\vert D^- (V-v)(x_i) \vert &\le& C N^{-1} (\ln N)^2,  \hbox{if } 1-\sigma _L < x_i \leq 1-\sigma _R, \\
\sqrt{\frac{\ve}{\theta}} \vert D^- (V-v)(x_i) \vert &\le& C N^{-1} (\ln N)^2, \hbox{if } x_i > 1-\sigma _R,
\end{eqnarray*}
\end{theorem}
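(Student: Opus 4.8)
The plan is to treat the regular error flux ${\cal V}^- := D^-(V-v)$ by the same stability-and-consistency scheme already used for the layer fluxes in Theorems \ref{LWBound} and \ref{RWBound}, working with the modified operator $\hat L^N$ of \eqref{mod-discrete-operator} rather than $L^N$. Specialising the flux identity \eqref{prob-deriv} to the regular component gives
\[
\hat L^N {\cal V}^-(x_i) = D^-{\cal T}_V(x_i) - (V-v)(x_{i-1})\,D^- b(x_i),
\]
where ${\cal T}_V := L^N(V-v) = -\ve(v''-\delta^2 v) + \mu a(v'-D^- v)$ is the regular truncation error. Three ingredients feed the argument. First, the boundary flux bounds $|D^+(V-v)(0)|\le CN^{-1}$ and $\sqrt{\ve/\theta}\,|D^-(V-v)(1)|\le CN^{-1}$ from Lemma 8 anchor the comparison argument at the two endpoints. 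Second, the nodal bound $\|V-v\|\le CN^{-1}x_i+CN^{-2}$ (obtained inside the proof of Lemma 8) makes the reaction term $(V-v)(x_{i-1})\,D^- b = O(N^{-1})$. Third, unlike the layer components $v$ has no exponential decay, but \eqref{boundvf21} supplies $|v|_2,|v|_3\le C$, $|v|_4\le C(1+\sqrt{\theta/\ve})$ and $|v|_5\le C(1+\theta/\ve)$, which together with $\ve\theta\le\alpha/\gamma$ (i.e.\ $\theta\le\alpha/(\gamma\ve)$) and $\mu\sqrt{\ve\theta}\le C$ keep the scaled consistency small.

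On the interior of each uniform mesh block I would feed these derivative bounds into the sharp uniform-mesh consistency estimates of \eqref{trunc_x}. On the left fine mesh, where $h_L=C\sqrt{\ve\theta}\,N^{-1}\ln N$, the dominant terms $\ve h_L|v|_4$ and $\mu a\,h_L|v|_3$ collapse (via $\ve\theta\le C$ and $\mu\sqrt{\ve\theta}\le C$) to the required order; on the coarse central mesh the $O(1)$ bounds $|v|_2,|v|_3\le C$ give an $O(N^{-1})$ consistency directly; and on the right fine mesh, with $h_R=C\sqrt{\ve/\theta}\,N^{-1}\ln N$, the same estimates produce the $\sqrt{\ve/\theta}$-scaled bound. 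In each block I would then invoke the discrete comparison principle for $\hat L^N$ — available under assumption \eqref{assum}, since $(b+\mu D^- a)\ge b-\mu\max|a'|>0$ — using a constant (or affine) barrier whose size at each end of the block is fixed either by the endpoint data of Lemma 8 or by the flux value already controlled at the neighbouring transition point, re-scaling by $\chi$ when passing between the three regions of the norm \eqref{norm}.

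The main obstacle, exactly as flagged in the remark preceding the statement, is the behaviour at the four mesh-transition points $\sigma_L,\ \sigma_L+H,\ 1-\sigma_R,\ 1-\sigma_R+h_R$, where the mesh is non-uniform and only the cruder bounds in \eqref{trunc_x} apply. There the factor $1+(h_{i-1}+h_{i+1})/h_i$ is large — of size $H/h_L$ at $\sigma_L$, for instance — so $D^-{\cal T}_V$ is not uniformly small but exhibits isolated jumps. I would absorb the effect of each such isolated source on ${\cal V}^-$ by a localised barrier function, in the spirit of the constructions \eqref{defB} and \eqref{defBnew} used for the right layer, equivalently noting that the discrete Green's function of $\hat L^N$ weights a point source by roughly the local mesh width divided by the reaction lower bound $b\ge\gamma\alpha$; this is what turns each isolated jump into a contribution of order $N^{-1}\ln N$. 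The delicate part is then the bookkeeping of logarithmic factors: propagating the two jumps at $\sigma_L,\ \sigma_L+H$ together with the uniform-block estimate across the coarse region yields the $CN^{-1}(\ln N)^2$ interior bound, the left fine mesh stays free of logarithmic factors because its boundary flux from Lemma 8 is already $O(N^{-1})$ and no interior transition intervenes, and the jump at $1-\sigma_R$ propagated back under the $\sqrt{\ve/\theta}$ scaling gives the stated right-hand bound. Assembling the uniform-block and localised estimates region by region, anchored by the two endpoint fluxes, completes the proof.
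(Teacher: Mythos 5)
Your proposal follows the same skeleton as the paper's Appendix C proof: the flux equation \eqref{prob-deriv} specialised to ${\cal V}^-=D^-(V-v)$ under the operator $\hat L^N$ of \eqref{mod-discrete-operator}, the endpoint fluxes from Lemma 8, the nodal bound $\Vert V-v\Vert \le CN^{-1}x_i+CN^{-2}$, and consistency bounds via \eqref{trunc_x} that are $O(N^{-1})$ on the uniform blocks with isolated jumps at the four transition points. However, the two steps you leave to heuristics are exactly where the paper's proof lives, and as described they do not go through. First, the block-by-block comparison anchored ``by the flux value already controlled at the neighbouring transition point'' is circular: unlike the layer components, whose nodal errors at the transition points are exponentially small, there is no a priori control of ${\cal V}^-$ at $\sigma_L$, $\sigma_L+H$, $1-\sigma_R$ or $1-\sigma_R+h_R$; these values are part of what must be proven. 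The paper instead applies the comparison principle once, globally on $(h_1,1)$, with composite barrier functions defined across the whole mesh.

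Second, absorbing the isolated jumps is not a routine Green's-function estimate. The natural localised barriers --- the discrete ramps $R_2,R_3$ and steps $S_1,S_2$ of Appendix C --- are not themselves barrier functions: applying $\hat \delta^2$ to a kink or jump produces a large \emph{negative} value of $\hat L^N$ at an adjacent node (e.g.\ $\hat L^N S_1(\sigma_L) \ge -\frac{N^2\gamma\alpha}{16\theta(\ln N)^2}$), so each localised piece must be paired with a compensating multiple of another function with exactly tuned coefficients. Moreover, the compensated combination needed to dominate the jump of size $C(\ve+\mu)$ at $\sigma_L+H$ carries an amplification factor of order $1+\mu/\ve$, and it is precisely the inequality $\sqrt{\ve/\theta}\,(1+\mu/\ve)\le C$ that limits this first stage to the \emph{scaled} bound $\sqrt{\ve/\theta}\,\vert {\cal V}^-\vert \le CN^{-1}\ln N$ everywhere --- not the unscaled left and interior bounds in the statement. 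To remove the scaling, the paper requires a second stage with discrete exponential barriers ($Z_L,Z_R$ when $\theta=1$, yielding \eqref{react-diff-smooth}; $P$, $Q$, $\hat Z_L$ when $\theta>1$), and the two cases need genuinely different constructions: the $\theta>1$ barriers involve the factor $\ve/\mu$, which degenerates as $\mu\to 0$, while the $\theta=1$ barriers use $\sqrt{\ve}$ weights. Your proposal makes no case distinction between $\theta=1$ and $\theta>1$ and offers no mechanism replacing this two-stage construction, so the claimed conclusions --- in particular the log-free bound $\vert D^-(V-v)(x_i)\vert\le CN^{-1}$ on the left fine mesh --- remain unproven.
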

\begin{proof} The proof is given in Appendix C. \end{proof}

Given the bounds in Theorems \ref{LWBound}, \ref{RWBound} and \ref{RegBound}, it only remains to remove the scaling factors in certain parts of the layer regions, in the particular case where  the analytical layer width is thinner than the computational layer width. That is, if $\tau _L \leq \sigma _L$ (or $\tau _R \leq \sigma _R$) then we need to remove the scaling factor $\sqrt{\ve \theta}$ (or $\sqrt{\frac{\ve }{\theta}}$) from the bounds in Theorems \ref{LWBound}, \ref{RWBound} and \ref{RegBound} within the region $\tau _L < x_i < \sigma _L$ (or $1-\sigma _R < x_i < 1- \tau _R$).

 \begin{theorem}\label{NodalBound} Assume (\ref{assumption}).
We have the scaled nodal error  bounds
\begin{eqnarray*}
\sqrt{\ve\theta}\vert D^- (U-u)(x_i) \vert &\le& C N^{-1}, \quad  \hbox{if }\ x_i \leq  \tau _L,\\
\vert D^- (U-u)(x_i) \vert &\le& C N^{-1} (\ln N)^2, \quad \hbox{if } \ 1-\tau _L < x_i \leq 1-\tau _R, \\
\sqrt{\frac{\ve}{\theta}} \vert D^- (U-u)(x_i) \vert &\le& C N^{-1} (\ln N)^2,\quad \hbox{if }\  x_i > 1-\tau _R,
\end{eqnarray*}
where $\tau _l, \tau _R$ are defined in (\ref{tau}).
\end{theorem}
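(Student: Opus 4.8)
The plan is to build the estimate from the component decomposition $D^-(U-u)=D^-(V-v)+D^-(W_L-w_L)+D^-(W_R-w_R)$ together with the three flux bounds of Theorems \ref{LWBound}, \ref{RWBound} and \ref{RegBound}, and then to strip the weights on the two thin strips where the analytical layer is thinner than the computational one. Since $\tau_L=\tfrac{2}{\rho_L}\ln\rho_L\le\tfrac{2}{\rho_L}\ln N=\sigma_L$ and likewise $\tau_R\le\sigma_R$, the point sets $\{x_i\le\tau_L\}$ and $\{x_i>1-\tau_R\}$ lie inside the fine-mesh regions already treated, so the only genuinely new work lives on the strips $\tau_L<x_i\le\sigma_L$ and $1-\sigma_R<x_i\le 1-\tau_R$; the union of these with the coarse interior recovers the middle range $\tau_L<x_i\le 1-\tau_R$.

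First I would dispose of the three easy regions. On $x_i\le\tau_L$ the three scaled estimates apply verbatim, and, using $\sqrt{\ve\theta}\le C$ to absorb the already-unscaled regular flux of Theorem \ref{RegBound}, summation reproduces the stated left-hand bound (the left-layer term of Theorem \ref{LWBound} being the only one carrying a logarithm). The region $x_i>1-\tau_R$ is identical with the weight $\sqrt{\ve/\theta}$, and on the coarse interior $\sigma_L<x_i\le 1-\sigma_R$ all three estimates are already unweighted and combine at once to $CN^{-1}(\ln N)^2$.

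The heart of the argument is the strip $\tau_L<x_i\le\sigma_L$, where I must obtain $|D^-(U-u)|\le CN^{-1}(\ln N)^2$ \emph{without} the factor $\sqrt{\ve\theta}$. The regular flux is already $O(N^{-1})$ by Theorem \ref{RegBound}; moreover assumption (\ref{assumption}) forces $\rho_R>16\ln N$, so Theorem \ref{boundwrightleft}(ii) and Theorem \ref{t1} make $w_R$ and $W_R$ small enough on all of $[0,\sigma_L]$ to beat the small mesh width $h_L$, whence $D^-(W_R-w_R)$ is $O(N^{-1})$ even unscaled. The genuinely new term is $D^-(W_L-w_L)$, and this is the main obstacle, because the uniform bound of Theorem \ref{LWBound} records no decay. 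I would therefore re-run its flux argument with the \emph{decaying} barrier $B(x_i):=C\tfrac{N^{-1}\ln N}{\sqrt{\ve\theta}}\,\Psi_L(x_i)+CN^{-1}(\ln N)^2$, using the sharper observation that the truncation $\hat L^N D^-(W_L-w_L)$ is controlled by \emph{local} derivative norms of $w_L$ and hence decays like $e^{-\rho_L x_i}\le C\Psi_L(x_i)$ (the additive constant absorbing the zeroth-order perturbation $e_{i-1}D^-b$). The key algebraic input, obtained exactly as in Theorem \ref{t1} (the identity $\hat\delta^2=\delta^2$ on the uniform fine mesh, together with $b-\mu a\rho_L\ge\tfrac12\gamma\alpha$ and $\ve\rho_L^2\le\tfrac14\gamma\alpha$), is that $\hat L^N\Psi_L\ge\tfrac14\gamma\alpha\,\Psi_L$ with a parameter-independent constant, so the comparison principle for $\hat L^N$ gives $|D^-(W_L-w_L)(x_i)|\le B(x_i)$.

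The weight then disappears on evaluation: since $\Psi_L(\tau_L)\le Ce^{-\rho_L\tau_L}=C\rho_L^{-2}=C\ve\theta$, for every $x_i\ge\tau_L$ one has $\tfrac{N^{-1}\ln N}{\sqrt{\ve\theta}}\,\Psi_L(x_i)\le CN^{-1}\ln N\,\sqrt{\ve\theta}\le CN^{-1}\ln N$, and summing the three contributions yields $CN^{-1}(\ln N)^2$ on the strip. The strip $1-\sigma_R<x_i\le 1-\tau_R$ is the mirror image: here $D^-(W_L-w_L)$ is negligible ($\rho_L>8\ln N$), while $D^-(W_R-w_R)$ is controlled by a barrier proportional to $\Psi_R$, for which $\Psi_R(1-\tau_R)\le Ce^{-\frac12\rho_R\tau_R}=C\rho_R^{-1}$ cancels exactly against the inverse weight $\sqrt{\theta/\ve}=C\rho_R$, again giving $CN^{-1}(\ln N)^2$. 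I expect the most delicate point to be the regular component on this right strip: unlike the pure layer terms it carries the unbounded derivatives $|v|_4,|v|_5$ coming from Theorem \ref{boundwrightleft}(i), so one must first isolate the rough part of $v$, show that it decays like $\Psi_R$ away from $x=1$, and only then apply the same decaying-barrier cancellation.
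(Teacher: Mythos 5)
Your overall strategy is the right one and matches the paper's in outline (reduce everything to the two strips $\tau_L<x_i\le\sigma_L$ and $1-\sigma_R<x_i\le 1-\tau_R$, run a comparison argument for $\hat L^N$ with a geometrically decaying barrier, and cash in the cancellation $e^{-\rho_L\tau_L}\le C\rho_L^{-2}\le C\ve\theta$, resp.\ $e^{-\rho_R\tau_R/2}\le C\rho_R^{-1}$). But there is a genuine gap: your barrier ignores the mesh transition points, and that is precisely where the paper's proof has to work hardest. The flux equation (\ref{prob-deriv}) must be dominated at \emph{every} interior point of the comparison region, and this includes $x_i=\sigma_L$ (mirror: $x_i=1-\sigma_R+h_R$). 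At that point the mesh is not uniform ($h_i=h_L$, $h_{i+1}=H$), so the identity $\hat\delta^2=\delta^2$ and your estimate $\hat L^N\Psi_L\ge\tfrac14\gamma\alpha\Psi_L$ both fail; worse, the truncation bounds (\ref{trunc_x}a,b) carry the factor $1+\frac{h_{i+1}}{h_i}\sim\frac{H}{h_L}\sim\frac{\rho_L}{\ln N}$, so the truncation error at $\sigma_L$ is of size $C\bigl(\frac{1}{\ve\theta^2}+\frac{\mu}{\ve\theta}N^{-1}\ln N\bigr)e^{-\rho_L\sigma_L}\sim \frac{N^{-2}}{\ve\theta^2}$, with \emph{no} factor $N^{-1}$. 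Since on a nonempty strip one only has $\ve\theta\gtrsim N^{-2}$, this can be $O(1)$ (e.g.\ $\theta=1$, $\ve\sim N^{-2}$). Your additive constant obeys $\hat L^N[\hbox{const}]\approx b\cdot\hbox{const}$, so a constant of size $N^{-1}(\ln N)^2$ cannot absorb an $O(1)$ right-hand side; and the decaying barrier gives nothing there either (with the full rate $\rho_L$ its $-\ve\hat\delta^2$ term at $\sigma_L$ can even have the wrong sign; the paper's $\Psi_1$ must be truncated to zero at $\sigma_L+H$ and run at the halved rate $0.5\rho_L$ just to achieve $\hat L^N\Psi_1(\sigma_L)\ge0$, and only under $\rho_L\le N$). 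This is exactly why the paper's proof introduces a \emph{second} barrier with a spike at the transition point: the ramp $\Psi_2(x_i)=x_i/\sigma_L$ with $\hat L^N\Psi_2(\sigma_L)=\frac{1}{h_L\sigma_L}(\mu a h_L+\ve)\gtrsim\frac{N}{\theta(\ln N)^2}$ on the left, and the step/linear barriers $\Psi_4,\Psi_5$ at $1-\sigma_R+h_R$ on the right. Without such a barrier the comparison argument on the strips does not close, nor can you sidestep it by stopping the comparison region at $\sigma_L$, since the direct bound on the flux error at $\sigma_L$ from the $CN^{-2}$ nodal errors is only $CN^{-2}/h_L\sim C/\ln N$.

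Two further points. First, the regular component on the right strip, which you only flag as delicate, is handled in the paper not by splitting $v$ into smooth and rough parts but by exploiting the first-order upwind structure: in the fine mesh the flux error satisfies the recursion $-\frac{\ve}{h_R}({\mathscr V}^-_{i+1}-{\mathscr V}^-_i)+\mu a\,{\mathscr V}^-_i=\hat{\mathscr T}_i$, which is solved explicitly with outflow data $\vert{\mathscr V}^-_N\vert\le C\mu\ve^{-1}N^{-1}$ (Lemma 8), the exponential factor $e^{-\frac{\alpha\mu}{2\ve}(1-x_i)}$ being killed on $x_i\le 1-\tau_R$ exactly by the strip cancellation (this is the $\theta>1$ case; for $\theta=1$ the paper simply invokes (\ref{react-diff-smooth})). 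Second, your opening claim $\tau_L\le\sigma_L$, $\tau_R\le\sigma_R$ is not unconditional: under (\ref{assumption}) it holds iff $\rho_L\le N$ (resp.\ $\rho_R\le N$), and otherwise $\tau_L>\sigma_L$. That case is harmless — the strips are then empty and Theorems \ref{LWBound}, \ref{RWBound}, \ref{RegBound} already give the stated bounds — but it is the reason the paper phrases the scaling removal as conditional on $\tau_L\le\sigma_L$ rather than as a fact.
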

\begin{proof} 

(i) We begin by examining the error in the layer function $W_L$ ($W_R$) in the fine mesh region on the right-hand (left-hand) side of the domain., Let us first consider the error in the left layer function $W_L-w_L$ in the right layer region $(1-\sigma _R, 1-\tau _R)$. For $x \geq 0.5$ and $x_i \geq 0.5$
\begin{eqnarray*}
|w_{L}(x)| &\leq& C e^{-2\rho _L\sigma _L}e^{-\rho _L(x-2\sigma _L)} \leq CN^{-4}; \\
|W_{L}(x_i)| &\leq& C (1+\rho _L h_L) ^{-N/2}\leq CN^{-4}, \quad \hbox{as} \quad H \geq h_L.
\end{eqnarray*}
If $\tau _R \leq \sigma _R$, then $\sqrt{\frac{\theta}{\ve}} \leq CN^2$
and so \[
\vert (w_L - W_L)(x_i) \vert \leq CN ^{-4} \leq CN ^{-2}\sqrt{\frac{\ve}{\theta}},\quad \hbox{if} \quad x_i \geq 0.5. \]
Hence,
\[
\vert D^-(w_L - W_L)(x_i) \vert \leq CN ^{-1},\quad \hbox{if} \quad x_i \geq 0.5 \quad \hbox{and} \quad \tau _R \leq \sigma _R .
\]
An analogous argument can be used to establish
\[
\vert D^-(w_R - W_R)(x_i) \vert \leq CN ^{-2},\quad \hbox{if} \quad x_i \geq 0.5 \quad \hbox{and} \quad \tau _L \leq \sigma _L .
\]
(ii) Let us next consider  the left layer error $D^-(W_L-w_L)$ in the left region $[0, \sigma_L+H]$. A more refined analysis (to that used in Theorem \ref{LWBound})) is required. The analysis requires the construction of a discrete barrier function across the non-uniform mesh and using a sharper truncation error analysis. 
Using the truncation error bounds $\eqref{trunc_x}$ and the exponential bounds in Theorem  \ref{boundwrightleft} in the region $(0, \sigma_L+H)$, we have
\begin{eqnarray*}
 \vert \hat{L}^{N}D^-(W_L-w_L) (x_i)\vert&\leq& \frac{C}{\sqrt{\ve\theta}}N^{-1}\ln N e^{-\rho _L x_i}, \quad x _i < \sigma _L\\  \vert \hat{L}^{N}D^-(W_L-w_L) (x_i)\vert
&\leq& C(\frac{1}{\ve\theta^2}+{\frac{\mu}{\ve\theta}}N^{-1}\ln N) e^{-\rho _L x_i}, \quad x _i = \sigma _L.
\end{eqnarray*}
We now construct a suitable barrier function (which is similar to $\Psi _L$ defined in (\ref{bounddwl})):
\[
\Psi _1 (x_i) := (1+0.5\rho_L h_L)^{-i}, 0 \leq x _i \leq \sigma _L; \qquad \Psi _1 (\sigma_L+H) :=0.
\]
For $x_i < \sigma _L$, as in Theorem 3, $\hat L^N\Psi _1 (x_i) \geq C e^{-0.5\rho _L x_i}$ and for $x_i=\sigma _L$, using (\ref{hat-delta}) and (\ref{assum}),
\begin{eqnarray*}
\hat L^N\Psi _1 (\sigma _L) &=& \Bigl[ \frac{\ve}{h_L} \Bigl(\frac{1}{\bar h_L} - \frac{\rho _L}{2} \Bigr)+ \Bigl(-0.5\rho _L \mu  a(x_{i-1}) + (b +\mu  D^-a)(x_i) \Bigr)\Bigr] \Psi _1 (\sigma _L) \\
&\geq& \Bigl[ \frac{\ve}{h_L\bar h_L} \Bigl(1 - \frac{\bar h_L\rho _L}{2} \Bigr)+ \Bigl(-\frac{\gamma}{4}  a(x_{i-1}) + (b +\mu  D^-a)(x_i) \Bigr)\Bigr] \Psi _1 (\sigma _L) \\
&\geq& \Bigl[ \frac{\ve}{h_L\bar h_L} \Bigl(1 - \frac{\bar h_L\rho _L}{2} \Bigr)+ \Bigl(\frac{b}{2} -\frac{\gamma}{4}  a(x_{i-1}) + 0.5(b +2\mu  D^-a)(x_i) \Bigr)\Bigr] \Psi _1 (\sigma _L) \\
&\geq&  \frac{\ve}{h_L\bar h_L} \Bigl(1 - \frac{\bar h_L\rho _L}{2} \Bigr) \Psi _1 (\sigma _L) \\
&\geq&  \frac{\ve}{2h_L\bar h_L} \Bigl(1 - \frac{\rho _L}{N}(1-\sigma _R) + 1 - \frac{2\ln N}{N}\Bigr) \Psi _1 (\sigma _L) 
\end{eqnarray*}
for $N$ sufficiently large. In the case where $\tau _L \leq \sigma _L$, then  $\rho _L \leq N$ and hence
\[
\hat L^N\Psi _1 (\sigma _L) \geq 0,\quad \hbox{if} \quad \tau _L \leq \sigma _L. 
\]
Consider the  piecewise linear barrier function
\[
\Psi _2 (x_i) := \frac{x_i}{\sigma _L}, 0 \leq x _i \leq \sigma _L; \qquad \Psi _2 (\sigma_L+H) :=1.
\]
For $x_i < \sigma _L$, $\hat L^N\Psi _2 (x_i) \geq 0$ and at the transition point $\sigma _L$, using (\ref{hat-delta}),
\[
\hat L^N\Psi _2 (\sigma _L) = \frac{1}{h_L\sigma _L} (\mu a h _L+ \ve)\geq \frac{CN}{\theta} (\ln N)^{-2}+ C{\frac{\mu}{\sqrt{\ve\theta}}}(\ln N)^{-1}.
\]
Then we deduce that
\[
\vert D^-(W_L-w_L) (x_i)\vert \leq \frac{C}{\sqrt{\ve\theta}}N^{-1}\ln N \Psi _1 (x_i) +C\bigl(\frac{\theta}{N}\frac{1}{\ve\theta^2}
+  \frac{N^{-1}}{\sqrt{\ve \theta}}\bigr){e^{-\rho _L\sigma _L}\Psi _2(x_i)(\ln N)^{2}}.
\]
For $\tau _L \leq x_i \leq \sigma _L$,  noting $e^{-\rho _L\tau _L} \leq C \rho _L ^{-2} \leq C\ve \theta $,
\[
\vert D^-(W_L-w_L) (x_i)\vert \leq \frac{C}{\sqrt{\ve\theta}}(N^{-1}\ln N) \sqrt{\ve\theta} +CN^{-1}(\ln N)^2
\leq CN^{-1}(\ln N )^2.
\]

(iii) Let us now consider the error  $D^-(W_R-w_R)$ in the right fine mesh subregion $(1-\sigma _R, 1-\tau _R)$.
Using the truncation error bounds $\eqref{trunc_x}$ and the exponential bounds in Theorem  \ref{boundwrightleft} in the region $(1- \sigma_R,1)$, we have
\begin{eqnarray*}
 \vert \hat{L}^{N}D^-(W_R-w_R) (x_i)\vert&\leq& CN^{-1}\ln N \Bigl( \theta \sqrt{\frac{\theta }{\ve}} e^{-\rho _R(1-x_i)} + 1 \Bigr), \ x _i >1- \sigma_R +h_R;\\
 \vert \hat{L}^{N}D^-(W_R-w_R) (x_i)\vert
&\leq& C\theta \sqrt{\frac{\theta }{\ve}} (1+\sqrt{\frac{\theta }{\ve}}\frac{1 }{\ln N})e^{-\rho _R(1-x_i)}, \quad x _i = 1- \sigma_R+h_R.
\end{eqnarray*}
Consider the barrier function (which is a truncated version of  $\Psi _R$ defined in (\ref{bounddwr}))
\[
\Psi _3 (x_i) := (1+0.5\rho _R h_R)^{ -(N-i)}, 1- \sigma_R < x _i \leq 1; \qquad \Psi _3 (1- \sigma_R) :=0.
\]
For $x_i > 1-\sigma _R+h_R$, $\hat L^N\Psi _3 (x_i) \geq C \theta e^{-\frac{\rho _R}{2}(1-x_i)}$ and $\hat L^N\Psi _3(1-\sigma _R+h_R) \geq 0$.
This barrier function will be used to deal with the truncation error across the fine mesh region $(1-\sigma _R+h_R,1)$.
 An additional barrier function is required to manage the larger truncation error at $x_i=1-\sigma _R+h_R$.
 Consider the  step  barrier function
\[
\Psi _4 (1-\sigma _R) := 0 ; \qquad \Psi _4 (x_i) :=1,\quad  1-\sigma _R +h_R \leq x _i \leq 1.
\]
For $x_i > 1-\sigma _R +h_R$, $\hat L^N\Psi _4 (x_i) \geq 0$ and at the single point $1-\sigma _R +h_R$, using (\ref{hat-delta}),
\[
\hat L^N\Psi _4 (1-\sigma _R +h_R) \geq  \frac{\mu}{h_R} +\frac{\ve}{Hh_R} \qquad  (\geq \ C\frac{N\theta}{\ln N},\ \hbox{if} \ \theta >1).
\]
Then, in the particular case where $\theta > 1$, we deduce that
\[
\vert D^-(W_R-w_R) (x_i)\vert \leq \frac{C \sqrt{\theta}}{\sqrt{\ve}}(N^{-1}\ln N) \Psi _3 (x_i) +C\frac{1}{\theta N}\frac{\theta^2}{\ve}
e^{-\rho _R\sigma _R} \Psi _4 (x_i) + CN^{-1}\ln N.
\]
For $1-\sigma _R < x_i \leq 1- \tau _R$, we note that on the fine mesh
\[
e^{-\rho _R\sigma _R} \leq e^{-\rho _R\tau _R} \leq \rho _R ^2 \leq C(\frac{\ve}{\theta}) \quad \hbox{and} \quad (1+0.5\rho _R h_R)^{-1} \leq Ce^{-\frac{\rho _R\tau _R}{2}}. 
\]
Hence, 
\[
\vert D^-(W_R-w_R) (x_i)\vert \leq CN^{-1}\ln N, \quad \hbox{if} \quad \theta > 1. \]
When $\theta =1$, we employ an alternative barrier function to $\Psi _4(x_i)$ defined as
\[
\Psi _5(1-\sigma _R):=1, \qquad \Psi _5(x_i):=\frac{1-x_i}{\sigma _R-h_R}, \ 1-\sigma _R+h_R \leq  x_i \leq 1.
\]
Using (\ref{hat-delta}) and the fact that $\frac{b}{a} - \frac{\mu}{\sigma _R} >0$, we note that
\[
\hat L^N\Psi _5 (1-\sigma _R +h_R) \geq  C\frac{N}{(\ln N)^2}; \qquad\hat L^N\Psi _5 (x_i) \geq  0, \ x _i >1-\sigma _R +h_R.
\]Then, in the particular case where $\theta = 1$, we deduce that
\[
\vert D^-(W_R-w_R) (x_i)\vert \leq \frac{C \sqrt{\theta}}{\sqrt{\ve}}N^{-1}\ln N \Psi _3 (x_i) +CN^{-1}\frac{\ln N }{\ve }
e^{-\rho _R\sigma _R} \Psi _5 (x_i) + CN^{-1}\ln N.
\]
For $1-\sigma _R < x_i \leq 1- \tau _R$, we have $e^{-\rho _R(1-x_i)} \leq
e^{-\rho _R\tau _R} \leq C\ve$. Hence, 
\[
\vert D^-(W_R-w_R) (x_i)\vert \leq CN^{-1}\ln N + CN^{-1}\ln N \frac{\ve }{\ve }\leq CN^{-1}\ln N , \qquad \hbox{if} \ \theta = 1.
\]
(iv) We complete the argument, by dealing with the regular component.
 In the case of $\theta =1$, note the bound (\ref{react-diff-smooth}) for the regular component.
 Let us  consider the regular component in the case of $\theta  > 1$. Note that $\Vert L^N(v-V) \Vert \leq C(\ve + \mu)N^{-1}$ and so
\[
\Vert V-v \Vert \leq CN^{-1} \mu, \qquad \theta >  1.
\]
Note that we can confine the discussion to the mesh points in the region $(1-\sigma _R, 1-\tau _R)$.
Within the fine mesh region $(1-\sigma _R, 1)$, the error in the flux satisfies the first order problem
\[
-\ds\frac{\ve}{h_R} ({\mathscr V}^-_{i+1}-{\mathscr V}^-_{i})+\mu a(x_i) {\mathscr V}^-_{i}=\hat {\mathscr T}_{i},
\quad \vert {\mathscr V}^-_{N} \vert \leq C\frac{\mu}{\ve}N^{-1};
\]
where
$
\hat {\mathscr T}_{i}:=L^{N}(V-v)(x_i)-b(x_i)(V-v)(x_i).
$
Note further that
$$
\Vert \hat {\mathscr T}\Vert \le CN^{-1} (\ve +\mu)\le C\mu N^{-1}.
$$
Thus, with $\rho :=\ds\frac{\alpha \mu h_R}{\ve} \leq CN^{-1} \ln N$, we have
$$
\vert {\mathscr V}^-_{i}\vert =\bigl(1+\ds\frac{\mu h_R}{\vr}a(x_i) \bigr)^{-1}  \bigl\vert \ds\frac{h_R}{\ve}\hat {\mathscr T}_{i}+
 {\mathscr V}^-_{i+1}\bigr\vert \le C(1+\rho )^{-1} \bigl( \ds\frac{\rho}{\mu} \Vert \hat {\mathscr T} \Vert + \vert {\mathscr V}^-_{i+1} \vert \bigr).
$$
We have the following estimate at $x_i$ (within the fine mesh where $(1+\rho)^{-1} \leq C e^{-\rho/2}$ for $N$ sufficiently large)
\begin{eqnarray*}
\vert {\mathscr V}^-_{i} \vert
 &\le&  (1+\rho )^{-1} \ds\frac{\rho}{\mu} \Vert \hat {\mathscr T} \Vert \ds\frac{1-(1+\rho)^{-(N-i)}}{1-(1+\rho)^{-1}}+C(1+\rho)^{-(N-i)}   \vert {\mathscr V}^-_{N} \vert \\
&\le& C N^{-1} + C\frac{\mu N^{-1}}{\ve}(1+\rho)^{-(N-i)} \leq C N^{-1} + C\frac{\mu N^{-1}}{\ve}e^{-\frac{\alpha \mu}{2\ve}(1-x_i)}.
\end{eqnarray*}
Hence, for $1-\sigma _R < x_i \leq 1 -\tau _R$,
$
\vert {\mathscr V}^-_{i} \vert \leq CN^{-1}$.
  \end{proof}

\section{Global error bounds}

In this section, we examine the global accuracy of the linear interpolant
\[
\bar U (x):= \sum _{i=1}^{N-1} U(x_i) \phi _i(x),\quad x \in [0,1],
\]
where $\phi _i(x)$ is the standard  piecewise linear basis functions, defined by the nodal values of $\phi_i(x_k)= \delta _{i,k}$.
 Note that
\[
(\bar u -u)' (x) =  D^-u(x_i)-u'(x), \quad x \in (x_{i-1},x_i];
\] and, hence, we have the following bound on the linear interpolant $\bar g$ (for any $g \in C^1$)  in the subinterval $I_i:= (x_{i-1},x_i)$
\begin{subequations}\label{interpol-error}
\begin{eqnarray}
\Vert g-\bar g \Vert _{I_{i}} &\leq& C\min \{ h^2_i \Vert g'' \Vert _{I_{i}},  \int _{t=x_{i-1}}^{x_i} \vert g'(t) \vert dt \} \\
\Vert (g-\bar g)' \Vert _{ I_{i}}
&\leq& C\min \{ h_i \Vert g'' \Vert _{ I_{i}}, \Vert g' \Vert _{ I_{i}} \}.
\end{eqnarray}
\end{subequations}

\begin{theorem}\label{Inter}
We have the interpolation error bound
\begin{eqnarray*}
\Vert u- \bar u \Vert _{1,\chi} \leq C N^{-1}\ln N,
\end{eqnarray*}
where $u$ is the solution of (\ref{cp1}) and $\bar u$ is the piecewise linear interpolant of $u$.
\end{theorem}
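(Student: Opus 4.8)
The plan is to exploit the linearity of both the solution decomposition and the interpolation operator. Writing $u = v + c_L w_L + c_R w_R$ with $|c_L|,|c_R| \leq C$ as in \eqref{decompf}, and noting that $\bar u = \bar v + c_L \bar w_L + c_R \bar w_R$, it suffices by the triangle inequality to bound $\Vert v - \bar v\Vert_{1,\chi}$, $\Vert w_L - \bar w_L\Vert_{1,\chi}$ and $\Vert w_R - \bar w_R\Vert_{1,\chi}$ separately. Each splits further into a value part $\Vert \cdot \Vert$ and a weighted derivative part $\Vert \chi(\cdot)'\Vert$, recalling the definition \eqref{norm}. The value part is routine: applying (\ref{interpol-error}a) together with $\vert v\vert_2 \leq C$, the Shishkin property $h_L^k\vert w_L\vert_k, h_R^k\vert w_R\vert_k \leq C(N^{-1}\ln N)^k$ inside the fine meshes, and the fact (used already in \eqref{wL-outside}, \eqref{wR-outside}) that $w_L,w_R$ are $O(N^{-2})$ outside their layers, gives $\Vert u-\bar u\Vert \leq C(N^{-1}\ln N)^2 \leq CN^{-1}\ln N$.

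For the regular component's derivative, I apply (\ref{interpol-error}b): since $\vert v\vert_2 \leq C$ by \eqref{boundvf21} and $\chi \leq C$ throughout (as $\sqrt{\ve/\theta}\leq 1$ and $\ve\theta = \max\{\ve,\alpha\mu^2/\gamma\}\leq C$), one has $\chi\vert (v-\bar v)'\vert \leq C\chi h_i\vert v\vert_2 \leq Ch_i \leq CN^{-1}$ on every mesh interval, the coarse width being $H\leq CN^{-1}$ and the fine widths smaller. Hence $\Vert \chi(v-\bar v)'\Vert \leq CN^{-1}$.

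The heart of the argument is the weighted derivative of the layer components; I treat $w_L$, the function $w_R$ being entirely analogous under the substitution $\rho_L\leftrightarrow\rho_R$, $\sqrt{\ve\theta}\leftrightarrow\sqrt{\ve/\theta}$ (using \eqref{sigma1}, \eqref{decay-rates}, (\ref{exp-bound-w}b)). I partition $[0,1]$ according to both the mesh transition $\sigma_L$ and the norm transition $\tau_L$ of \eqref{tau}, and use repeatedly the matching identities $e^{-\rho_L\tau_L}=\rho_L^{-2}$, $\rho_L^{-2}=C\ve\theta$ and $\sqrt{\ve\theta}\,\rho_L = C$, together with $e^{-\rho_L\sigma_L}=N^{-2}$ and $h_L = C\sqrt{\ve\theta}\,N^{-1}\ln N$. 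On $[0,\tau_L]$ (fine mesh, $\chi=\sqrt{\ve\theta}$) the bound (\ref{exp-bound-w}c) with $k=2$ gives $\chi h_L\vert w_L\vert_2 \leq \sqrt{\ve\theta}\cdot C\sqrt{\ve\theta}\,N^{-1}\ln N\cdot C(\ve\theta)^{-1}e^{-\rho_L x}\leq CN^{-1}\ln N$. On the remaining fine intervals up to $\sigma_L$, where $\chi=1$, the same $h_L\vert w_L\vert_2$ estimate applies, but now the decay $e^{-\rho_L x}\leq e^{-\rho_L\tau_L}=\rho_L^{-2}=C\ve\theta$ supplies the missing $\sqrt{\ve\theta}$ factor. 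In the coarse region $w_L$ is exponentially small (by (\ref{exp-bound-w}a) it is $O(N^{-2})$ at $\sigma_L$), and I use (\ref{interpol-error}b) in the form $\chi\vert (w_L-\bar w_L)'\vert \leq C\chi\max_{I_i}\vert w_L'\vert$ alongside $\chi\vert D^- w_L\vert \leq CN^{-1}$.

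The one genuinely delicate point, which I expect to be the main obstacle, is the regime $\rho_L > N$ (equivalently $\tau_L > \sigma_L$), in which the weight $\sqrt{\ve\theta}$ persists on the initial coarse intervals $\sigma_L < x < \tau_L$. There $\vert w_L'(x)\vert \leq C\rho_L e^{-\rho_L\sigma_L}=C\rho_L N^{-2}$ is too large on its own, but the retained weight rescues it: $\sqrt{\ve\theta}\,\rho_L = C$ yields $\chi\vert w_L'\vert \leq CN^{-2}$, while $\chi\vert D^- w_L\vert \leq \sqrt{\ve\theta}\cdot CN\cdot N^{-2}\leq CN^{-1}$. Beyond $\tau_L$ the weight drops to $1$, but now $e^{-\rho_L x}\leq \rho_L^{-2}$ forces $\vert w_L'\vert \leq C\rho_L^{-1}\leq CN^{-1}$, so the unweighted estimate suffices. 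This precise matching of the analytical layer width $\tau_L$ to the decay rate $\rho_L$ is exactly what makes $\tau_L$, rather than $\sigma_L$, the correct transition point for $\chi$. Collecting the regular contribution, the two layer contributions, and the value part, I obtain $\Vert u-\bar u\Vert_{1,\chi}\leq CN^{-1}\ln N$.
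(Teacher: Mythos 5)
Your proposal is correct and follows essentially the same route as the paper's own proof: the same decomposition $u=v+c_Lw_L+c_Rw_R$ combined with the interpolation bounds (\ref{interpol-error}), the trivial estimate for the regular component, and the same case split on $\tau_L$ versus $\sigma_L$ (equivalently $\rho_L$ versus $N$) driven by the matching identities $e^{-\rho_L\tau_L}=\rho_L^{-2}\leq C\ve\theta$ and $e^{-\rho_L\sigma_L}=N^{-2}$, with $w_R$ handled symmetrically. If anything, your treatment of the mesh interval straddling $\tau_L$ in the regime $\rho_L>N$ (bounding $D^-w_L$ by function values over the coarse mesh width rather than by $\Vert w_L'\Vert_{I_i}$) is slightly more careful than the paper's corresponding line.
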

\begin{proof}
Using the decomposition $u=v+(u-v)(0)w_L+(u-v-w_L)(1)w_R$, splitting the argument to inside and outside the computational layer regions
 $[0,\sigma _L],[1-\sigma _R, 1] $, using  the bounds (\ref{interpol-error}a), (\ref{exp-bound-w}b) and (\ref{exp-bound-w}c), we have the following interpolation error
\begin{equation}\label{interC0}
\Vert u -\bar u \Vert  \leq C (N^{-1}\ln N)^2.
\end{equation}
We next want to estimate the global error in approximating the scaled flux.
For the regular component it trivially follows that
\[
\Vert (v-\bar v)' \Vert _{ I_{i}} \leq CN^{-1}.
\]
For the left layer component, we first consider the case where $\tau _L \leq \sigma _L$. By using   the bound (\ref{interpol-error}b), we can obtain
\begin{eqnarray*}
\sqrt{\ve \theta} \Vert (w_L-\bar w_L)' \Vert _{ I_{i}} \leq CN^{-1} \ln N,\quad \hbox{for} \quad x_i \leq \tau _L\\
 \Vert (w_L-\bar w_L)' \Vert _{ I_{i}}\leq C\frac{h_L}{\ve \theta} e^{-\frac{\sqrt{\gamma \alpha}}{2\sqrt{\ve \theta }}\tau _L}\leq CN^{-1}\ln N,\quad \hbox{for} \quad \tau _L < x_i \leq \sigma _L \\
\Vert (w_L-\bar w_L)' \Vert _{ I_{i}}\leq \frac{C}{\sqrt{\ve \theta }} e^{-\frac{\sqrt{\gamma \alpha}}{4\sqrt{\ve \theta }}\sigma _L}e^{-\frac{\sqrt{\gamma \alpha}}{4\sqrt{\ve \theta }}\tau _L} \leq CN^{-1} ,\quad \hbox{for} \quad x_i > \sigma _L.
\end{eqnarray*}
For the alternative case, where $  \sigma _L \leq \tau _L$ we have the bounds
\begin{eqnarray*}
\sqrt{\ve \theta} \Vert (w_L-\bar w_L)' \Vert _{ I_{i}} \leq CN^{-1} \ln N,\quad \hbox{for} \quad x_i \leq \sigma _L\\
 \sqrt{\ve \theta} \Vert (w_L-\bar w_L)' \Vert _{ I_{i}}\leq C e^{-\frac{\sqrt{\gamma \alpha}}{2\sqrt{\ve \theta }}\sigma _L}\leq CN^{-2},\quad \hbox{for} \quad \sigma _L < x_i \leq \tau _L \\
\Vert (w_L-\bar w_L)' \Vert _{ I_{i}}\leq \frac{C}{\sqrt{\ve \theta }}e^{-\frac{\sqrt{\gamma \alpha}}{4\sqrt{\ve \theta }}\tau _L} e^{-\frac{\sqrt{\gamma \alpha}}{4\sqrt{\ve \theta }}\sigma _L} \leq CN^{-1} ,\quad \hbox{for} \quad x_i > \tau _L.
\end{eqnarray*}
A similar argument is used for the right layer component. We begin with  the case of $\tau _R \leq \sigma _R$:
\begin{eqnarray*}
\sqrt{\frac{\ve}{ \theta}} \Vert (w_R-\bar w_R)' \Vert _{ I_{i}} \leq CN^{-1} \ln N,\quad \hbox{for} \quad x_i > 1- \tau _R\\
\Vert (w_R-\bar w_R)' \Vert _{ I_{i}}\leq Ch_R{{\frac{\theta}{\ve}}} e^{-\frac{\sqrt{\gamma \alpha \theta}}{\sqrt{\ve }}\tau _R}\leq CN^{-1} \ln N,\quad \hbox{for} \quad 1-\sigma _R < x_i \leq 1-\tau _R \\
\Vert (w_R-\bar w_R)' \Vert _{ I_{i}}\leq C\sqrt{\frac{ \theta}{\ve}} e^{-\frac{\sqrt{\gamma \alpha \theta}}{2\sqrt{\ve }}\tau _R}e^{-\frac{\sqrt{\gamma \alpha \theta}}{2\sqrt{\ve }}\sigma _R} \leq CN^{-2} ,\quad \hbox{for} \quad x_i \leq 1- \sigma _R.
\end{eqnarray*}
For the alternative case, where $  \sigma _R \leq \tau _R$ we have the bounds
\begin{eqnarray*}
\sqrt{\frac{\ve}{ \theta}} \Vert (w_R-\bar w_R)' \Vert _{ I_{i}} \leq CN^{-1} \ln N,\quad \hbox{for} \quad x_i > 1-\sigma _R\\
\sqrt{\frac{\ve}{ \theta}} \Vert (w_R-\bar w_R)' \Vert _{ I_{i}}\leq C e^{-\frac{\sqrt{\gamma \alpha \theta}}{\sqrt{\ve }}\sigma _R}\leq CN^{-2},\quad \hbox{for} \quad 1-\tau _R < x_i \leq 1-\sigma _R \\
\Vert (w_R-\bar w_R)' \Vert _{ I_{i}}\leq C\sqrt{\frac{ \theta}{\ve}}e^{-\frac{\sqrt{\gamma \alpha \theta}}{2\sqrt{\ve }}\tau _R} e^{-\frac{\sqrt{\gamma \alpha \theta}}{2\sqrt{\ve }}\sigma _R} \leq CN^{-1} ,\quad \hbox{for} \quad x_i < 1-\tau _R.
\end{eqnarray*}
\end{proof}

We conclude with the statement of the main result of this paper.
\begin{theorem}\label{Main}
We have the global error  bound
\begin{eqnarray*}
\Vert u- \bar U \Vert _{1,\chi} &\leq& C N^{-1}\ln N , \quad \hbox{assuming} \quad (\ref{assumption}) \\
\Vert u- \bar U \Vert _{1,\chi} &\leq& C N^{-1}(\ln N)^3;
\end{eqnarray*}
where $u$ is the solution of (\ref{cp1}) and $U$ is the solution of (\ref{upwindop}).
\end{theorem}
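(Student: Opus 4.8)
The plan is to reduce the global bound to the nodal results already in hand, combined with the interpolation estimate of Theorem~\ref{Inter}, via the triangle inequality. Because interpolation is a linear operation, I would write $u-\bar U=(u-\bar u)+(\bar u-\bar U)$ and observe that $\bar u-\bar U=\overline{u-U}$ is precisely the piecewise-linear interpolant of the nodal error $u-U$. This gives
\[
\Vert u-\bar U\Vert_{1,\chi}\le \Vert u-\bar u\Vert_{1,\chi}+\Vert \overline{u-U}\Vert_{1,\chi},
\]
where the first term is $CN^{-1}\ln N$ directly from Theorem~\ref{Inter}. Everything therefore reduces to estimating the scaled $C^1$ norm of the interpolated nodal error $\overline{u-U}$.

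For $\Vert \overline{u-U}\Vert_{1,\chi}$ I would treat the two pieces of the norm separately. The $C^0$ part is immediate, since a piecewise-linear interpolant is bounded in the maximum norm by its nodal values, whence $\Vert \overline{u-U}\Vert\le \Vert u-U\Vert_{\Omega^N}$, which is controlled by the nodal error bound~(\ref{NodalError}). For the flux part, the key point is that on each mesh interval $I_i=(x_{i-1},x_i)$ the interpolant has the constant derivative $(\overline{u-U})'(x)=D^-(u-U)(x_i)$, so that $\Vert \chi(\overline{u-U})'\Vert_{I_i}=\chi|_{I_i}\,|D^-(u-U)(x_i)|$. Here the construction pays off: the breakpoints $\tau_L$ and $1-\tau_R$ of the weight $\chi$ in~(\ref{norm}) were chosen to coincide exactly with the region boundaries of Theorem~\ref{NodalBound}, so on each subregion the constant value $\chi|_{I_i}$ is precisely the scaling $\sqrt{\ve\theta}$, $1$, or $\sqrt{\ve/\theta}$ that multiplies the nodal flux error there. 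Taking the maximum over $i$ then reads off $\Vert \chi(\overline{u-U})'\Vert$ directly from Theorem~\ref{NodalBound}. Assembling the interpolation contribution with the $C^0$ and scaled-flux nodal contributions yields the first bound under assumption~(\ref{assumption}).

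The main obstacle is removing the ordering assumption~(\ref{assumption}) to reach the second, unconditional bound. The entire nodal analysis (Theorems~\ref{LWBound}, \ref{RWBound}, \ref{RegBound} and~\ref{NodalBound}) was carried out under $\sigma_R\le\sigma_L<1/4$; when this fails one has $\sigma_L<\sigma_R$ and the two layers exchange roles. Since the left and right layers carry the distinct decay rates $\rho_L,\rho_R$ and the distinct scalings $\sqrt{\ve\theta},\sqrt{\ve/\theta}$, this is not a clean reflection symmetry, and I would have to repeat the barrier-function arguments with the mesh orientation reversed. I expect these to retain the same structure but to lose an additional logarithmic factor in the transition-point estimates, which is what degrades the bound to $CN^{-1}(\ln N)^3$ in the general case. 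Finally, the borderline situations $\sigma_L=1/4$ or $\sigma_R=1/4$ (equivalently $\rho_L$ or $\rho_R$ of order one) fall outside the singularly perturbed regime flagged after~(\ref{decay-rates}) and are disposed of by classical, non-layer arguments.
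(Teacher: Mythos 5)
Your argument for the first bound (under (\ref{assumption})) is essentially the paper's own: the splitting $u-\bar U=(u-\bar u)+(\bar u-\bar U)$, the observation that $(\bar U-\bar u)'(x)=D^{-}(U-u)(x_i)$ on each $(x_{i-1},x_i]$, and the region-by-region matching of the weight $\chi$ against the scalings in Theorem \ref{NodalBound}, combined with Theorem \ref{Inter} and the nodal bound (\ref{NodalError}). That part is sound.

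The genuine gap is in your route to the second, unconditional bound. You take the failure of (\ref{assumption}) to mean $\sigma_L<\sigma_R$, with the two layers ``exchanging roles'', and you propose to rerun the barrier-function analysis with the mesh orientation reversed. That case is empty: since $\rho_R=2\theta\rho_L$ with $\theta\geq 1$, one always has $\frac{4}{\rho_R}\ln N\leq \frac{2}{\rho_L}\ln N$, hence $\sigma_R\leq\sigma_L$ for every choice of $\ve,\mu,N$. Assumption (\ref{assumption}) can only fail because the minimum in (\ref{sigma1}) is attained at $\frac14$, i.e. $\sigma_L=\frac14$ (and possibly $\sigma_R=\frac14$ as well). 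You set this situation aside as a borderline case ``outside the singularly perturbed regime'', but that is a misreading: the exclusion after (\ref{decay-rates}) concerns $\rho_L=1$ or $\rho_R=1$, whereas $\sigma_L=\frac14$ only forces $\rho_L\leq 8\ln N$, so the problem can still be strongly layered (with $\ve\theta$ as small as $C(\ln N)^{-2}$) and the derivatives of $u$ still grow with $N$. This capped regime is precisely where the second bound and its $(\ln N)^3$ factor originate: when $\sigma_R=\frac14$ the mesh is uniform and one has $\ve\vert u\vert_4+\mu\vert u\vert_3\leq C(\ln N)^2$ if $\theta=1$, and $\leq C\mu^4\ve^{-3}\leq C\mu(\ln N)^3$ if $\theta>1$, so the classical consistency-plus-barrier argument used for the regular component can be applied to the entire solution; when $\sigma_R<\sigma_L=\frac14$, the paper treats $v+w_L$ together as a regular-type component and estimates $w_R-\bar W_R$ exactly as in the layer-resolving case. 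Without this case analysis your proposal does not establish the bound $CN^{-1}(\ln N)^3$, and the log-loss mechanism you conjecture (extra logarithms from reversed transition-point estimates) is not the actual source of the third logarithm.
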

\begin{proof}
(i) Assume first that (\ref{assumption}) applies.
Combining the interpolation bound (\ref{interC0}) with the nodal error bound (\ref{NodalError}), we arrive at the following global error estimate:
\[
\Vert u -\bar U \Vert _{1,\chi} \leq C N^{-1}\ln N.
\]
 Note also that
$
(\bar U -\bar u)'(x) =  D^-(U-u)(x_i),\quad \forall x \in (x_{i-1},x_i].
$
Use this bound, Theorem \ref{NodalBound} and the interpolation bound in Theorem \ref{Inter} to finish.

(ii) If $\sigma _R =1/4$  then
\begin{eqnarray*}
\ve \vert u \vert _4 + \mu \vert u \vert _3 &\leq& C  \ve ^{-1} \leq C (\ln N )^2; \quad \hbox{if} \quad \theta =1; \\
\ve \vert u \vert _4 + \mu \vert u \vert _3 &\leq& C  \mu ^4 \ve ^{-3} \leq C \mu (\ln N )^3; \quad \hbox{if} \quad \theta > 1.
\end{eqnarray*}
If $\sigma _R  =1/4$, note that the mesh is uniform and apply the argument used to bound $\Vert v -\bar V \Vert _{1,\chi}$ to the entire solution.
If $\sigma _R <\sigma _L =1/4$, then combine the analysis for $v+W_L$ together as for the regular component and treat the error $\Vert w_R -\bar W_R \Vert _{1,\chi}$ as before.
\end{proof}

\section{Numerical results}

Consider the following  constant coefficient sample problem
\begin{equation}\label{eg1}
-\ve u''+\mu u' + u=x, \qquad\  x \in (0,1);\quad  u(0)=1, \ u(1)=0.
\end{equation}
Letting $m_1 := \mu+\sqrt{\mu^2+4\ve}$ and $m_2 := \mu-\sqrt{\mu^2+4\ve}$, the exact solution is given by
\[u(x) = \left(\frac{(1+\mu)e^{\frac{m_2}{2\ve}}+1-\mu}{e^{\frac{(-m_1+m_2)}{2\ve}}-1}\right)e^{-\frac{m_1(1-x)}{2\ve}}+\left(\frac{(\mu-1)e^{\frac{-m_1}{2\ve}}-1-\mu}{e^{\frac{-m_1+m_2}{2\ve}}-1}\right)e^{\frac{m_2 x}{2\ve}}+x-\mu.
\]
\begin{figure}[htb]
\centering
\begin{subfigure}{.5\textwidth}
  \centering
  \includegraphics[width=.8\linewidth]{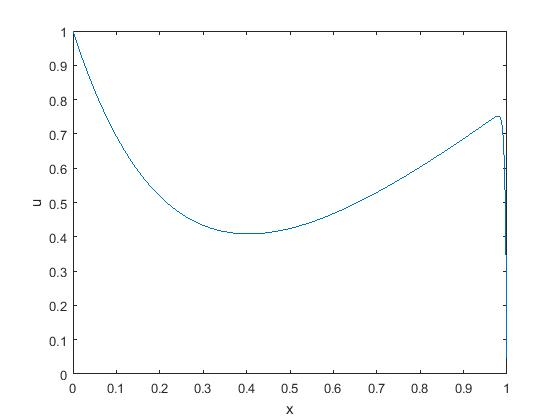}
  \caption{ $\ve = 2^{-10}$ and $\mu = 2^{-2}$}
  \label{fig:sol1}
\end{subfigure}%
\begin{subfigure}{.5\textwidth}
  \centering
  \includegraphics[width=.8\linewidth]{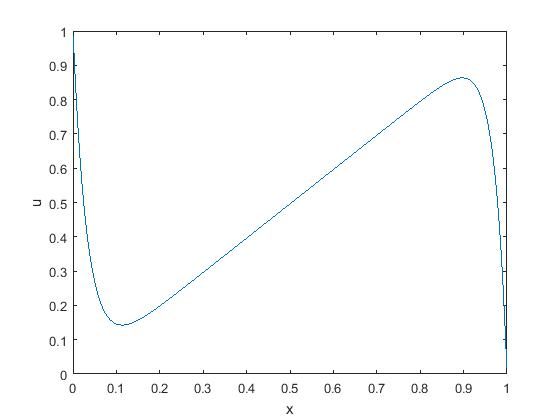}
  \caption{$\ve = 2^{-10}$ and $\mu = 2^{-8}$}
  \label{fig:sub2}
\end{subfigure}
\caption{Solution of \eqref{eg1} in the case of $\theta  > 1$ (left) and $\theta= 1$ (right).}
\label{fig:test}
\end{figure}
A sample plot of the solution in the convection-dominated case and in the reaction-dominated case are displayed in Figure 1. 

The solution to this problem was approximated by applying the upwind finite difference \eqref{upwindop} on the piecewise-uniform Shishkin mesh defined  in \eqref{scheme1}. Numerical approximations $U$ to the solution $u$ of \eqref{eg1} were generated over the parameter sets $S_\ve := \{ 2^{-2j}; j=0,1,\ldots, 20 \}, S_\mu = \{ 2^{-2j}; j=0,1,\ldots, 10 \}$ and  $N = \{ 2^{k}; k=6,7,\ldots, 11\} $. For each set of parameters, a global approximation $\bar U$ (to the solution $u$ of \eqref{eg1}) was generated using  linear interpolation.  
For each particular triple $(\ve, \mu, N)$ set of parameter values, the global scaled $C^1$ error $\Vert u - \bar U \Vert_{1,\chi}$ (as defined in \eqref{norm}) is estimated by calculating 
\[
 E^N_{\ve, \mu}  := \Vert \chi (u' - \bar U' )\Vert_{\Omega_{fine}} +  \Vert u - \bar U \Vert_{\Omega_{fine}},
\]
where $\Omega_{fine}$ is  a fine Shishkin mesh \eqref{scheme1},\eqref{sigma1} with $N = 8192$. 
The results presented in Tables 1 and 2 display parameter-uniform convergence in the $\Vert \cdot\Vert_{1,\chi}$ norm. 

 For each $N$, the parameter-uniform   orders of global convergence $p^N$ are estimated by computing
\[
 E^{N}:=\max_{\ve \in S_\ve , \mu \in S_\mu } E^{N}_{\ve, \mu} , \quad p^{N}:=\log _2(E^{N}/E^{2N}),
\]
which are displayed in Table 3. For the particular test problem (\ref{eg1}), these parameter-uniform   orders of global convergence are higher than the theoretical rates established in Theorem 12. 

\begin{table}[ht]
\caption{Computed global errors $E^{N}_{\ve, 2^{-4}}$,  where  $\mu = 2^{-4}$ and $\ve$ varies}
\center{
\begin{tabular}{|c|c|c|c|c|c|c|}  \hline 
$\epsilon$ / N&64&128&256&512&1024&2048\\ \hline
$2^0$&8.30e-03&4.12e-03&2.07e-03&1.04e-03&5.19e-04&2.60e-04\\ \hline
$2^{-2}$&2.86e-02&1.43e-02&7.25e-03&3.65e-03&1.83e-03&9.17e-04\\ \hline
$2^{-4}$&4.19e-02&2.09e-02&1.02e-02&4.81e-03&2.19e-03&1.09e-03\\ \hline
$2^{-6}$&1.17e-01&5.95e-02&2.95e-02&1.41e-02&6.33e-03&2.49e-03\\ \hline
$2^{-8}$&3.81e-01&2.08e-01&1.07e-01&5.29e-02&2.45e-02&9.95e-03\\ \hline
$2^{-10}$&{\bf 7.23e-01}&{\bf 4.52e-01}&2.59e-01&1.37e-01&6.71e-02&2.93e-02\\ \hline
$2^{-12}$&6.40e-01&4.51e-01&{\bf 2.92e-01}&{\bf 1.75e-01}&{\bf 9.64e-02}&{\bf 4.73e-02}\\ \hline
$2^{-14}$&6.19e-01&4.38e-01&2.84e-01&1.71e-01&9.43e-02&4.67e-02\\ \hline
$2^{-16}$&6.14e-01&4.35e-01&2.82e-01&1.69e-01&9.38e-02&4.66e-02\\ \hline
$2^{-18}$&6.12e-01&4.34e-01&2.82e-01&1.69e-01&9.36e-02&4.65e-02\\ \hline
.&.&.&.&.&.&.\\ 
.&.&.&.&.&.&.\\ 
.&.&.&.&.&.&.\\ 
.&.&.&.&.&.&.\\ \hline
$2^{-40}$&6.12e-01&4.34e-01&2.81e-01&1.69e-01&9.36e-02&4.65e-02\\ \hline
\end{tabular}
}
\end{table}

\begin{table}[ht]
\caption{Computed maximum global errors $E^{N}_{\mu}$,  in the scaled $C^1$ norm, measured over the set $S=\{ \ve =2^{-2j}, j=0,1..,20\}$ for various values of $\mu$}
\center{\begin{tabular}{|c|c|c|c|c|c|c|}  \hline 
$\mu$ / N&64&128&256&512&1024&2048\\ \hline
$2^0$& 4.83e-01&3.39e-01&2.20e-01&1.32e-01&7.33e-02&   3.64e-02 \\ \hline
$2^{-2}$&5.24e-01&3.60e-01&2.34e-01&1.40e-01& 7.75e-02&3.84e-02 \\ \hline
$2^{-4}$& 7.23e-01& 4.52e-01& 2.92e-01& 1.75e-01&   9.64e-02& 4.73e-02\\ \hline
$2^{-6}$& 1.07e+00& 6.72e-01& 3.86e-01& 2.05e-01&   1.11e-01& 5.30e-02 \\ \hline
$2^{-8}$ & 1.09e+00  &7.59e-01& 4.88e-01&   2.88e-01&   1.53e-01&   6.87e-02 \\ \hline
 $2^{-10}$& 1.09e+00 & 7.61e-01&   4.89e-01&   2.89e-01&   1.54e-01&   6.89e-02 \\ \hline
 $2^{-12}$ &  1.09e+00 &  7.62e-01 &  4.90e-01 &   2.90e-01&   1.54e-01&   6.89e-02\\ \hline
$2^{-14}$ &  1.09e+00&   7.62e-01 &  4.90e-01&   2.90e-01 &  1.54e-01 &  6.89e-02 \\ \hline
.&.&.&.&.&.&.\\ 
.&.&.&.&.&.&.\\ \hline
$2^{-20}$  & 1.09e+00 &  7.62e-01 &  4.90e-01&   2.90e-01 &  1.54e-01 &  6.89e-02 \\ \hline
\end{tabular}
}\end{table}

\begin{table}[ht]
\caption{Computed orders of parameter-uniform convergence in the scaled $C^1$ norm $\Vert \cdot\Vert_{1,\chi}$}
\center{
\begin{tabular}{|c|c|c|c|c|c|}  \hline 
$N $  &64&128&256&512&1024\\ \hline
$p ^N$&0.52&0.64&0.76&0.91&1.16 \\ \hline
\end{tabular}
}
\end{table}


\noindent {\bf References}


\vskip0.5cm 

{\bf Appendix A. Proof of Theorem 2.}

\begin{proof} (i)  The argument follows \cite{highorder2parameter} closely. We first consider the reaction-diffusion case, where $\theta =1$. We decompose the regular component, as in \cite{highorder2parameter}, in a series of terms of increasing half powers of $\ve$. That is, let
\begin{eqnarray*}
&&v= \sum _{i=0}^3 \ve ^{i/2} v_i,\quad
\hbox{where} \quad L_0v_0=f; \quad \sqrt{\ve} L_0v_i = (L_0-L_{\ve,\mu})v_{i-1}, \ i= 1,2; \\
&&\hbox{and} \qquad \sqrt{\ve} L_{\ve,\mu}v_3 = (L_0-L_{\ve,\mu})v_{2}, \qquad v_3(0)=v_3(1)=0.
\end{eqnarray*}
Assuming  $a \in C^7(\Omega), b,f \in C^9(\Omega)$, which is more regularity to that assumed  in \cite{highorder2parameter}, we see that
\begin{eqnarray*}
&& v_i \in C^{9-2i}(\Omega), i=0,1,2; \quad v_3 \in C^5(\Omega) \\
&& \hbox{and} \quad
\vert v \vert _k \leq C(1+\bigl(\sqrt{\ve}\bigr)^{3-k}), \quad \textrm{for}
\  0\leq k \leq 5, \quad \hbox{and} \quad \theta =1.
\end{eqnarray*}
For the convection-diffusion case, where $\theta >1$, we again follow \cite{highorder2parameter} and decompose the regular component in a series of terms of increasing integer powers of $\ve$ as follows:
Define
\begin{eqnarray*}
&&v= \sum _{i=0}^3 \ve ^{i} v_i,\quad
\hbox{where} \quad L_\mu v_0=f; \quad \sqrt{\ve} L_\mu v_i = (L_\mu-L_{\ve,\mu})v_{i-1}, \ i= 1,2; \\
&&\hbox{and} \qquad \sqrt{\ve} L_{\ve,\mu}v_3 = (L_\mu-L_{\ve,\mu})v_{2}, \qquad v_2(0)= v_3(0)=v_3(1)=0
\end{eqnarray*}
and $v_0(0),v_1(0)$ are suitably chosen. Assuming  $a, b,f \in C^6(\Omega)$, then following \cite{highorder2parameter} to identify  appropriate choices for $v_0(0),v_1(0)$, we deduce that
\begin{eqnarray*}
&& v_0 \in C^7(\Omega), v_1 \in C^6(\Omega), \quad v_2, v_3 \in C^5(\Omega);\\
&&
\vert v _i \vert _k \leq C(1+\mu ^{3-2i-k}), \quad \textrm{for}
\  0\leq k \leq 7-i, \quad \hbox{and} \quad i=0,1,2; \\
&& \vert v_3 \vert _k \leq C\bigl( \frac{\mu}{\ve} \bigr)^k \mu ^{-3}, \qquad 0 \leq k \leq 5.
\end{eqnarray*}
From these bounds we deduce that
\[
\vert v \vert _k \leq C(1+ \mu ^{3-k}\sum _{j=0}^3 (\ve \mu ^{-2})^j\ ) \leq C(1+\mu ^{3-k}), \quad 0 \leq k \leq 5.
\]
In other words,
\begin{eqnarray*}
\vert v \vert _4 &\leq& C(1+\frac{\mu}{\ve}\sum _{j=1}^4 (\ve \mu ^{-2})^j) \leq C(1+\frac{\mu}{\ve} ); \\
 \vert v \vert _5 &\leq& C(1+(\frac{\mu}{\ve})^2\sum _{j=2}^5 (\ve \mu ^{-2})^j) \leq C(1+(\frac{\mu}{\ve})^2 ).
\end{eqnarray*}
All of the bounds (\ref{boundvf21}) have now been established in both cases of $\theta =1$ and $\theta >1$.

(ii) We next establish the pointwise bounds on the layer components, using a comparison principle.
Observe that
\begin{eqnarray*}
L_{\ve,\mu} e^{-\frac{\sqrt{\gamma \alpha}}{2\sqrt{\ve \theta }}x} = a(\frac{b}{a} - \frac{1}{a}\frac{\gamma \alpha}{4\theta} - \frac{\mu}{2} \sqrt{\frac{\gamma \alpha}{\ve \theta}} ) e^{-\frac{\sqrt{\gamma \alpha}}{2\sqrt{\ve \theta }}x} \\
\geq a \gamma (1 - \frac{1}{4\theta} - \frac{1}{2} \sqrt{\frac{\mu ^2 \alpha}{\gamma \ve \theta}} ) e^{-\frac{\sqrt{\gamma \alpha}}{2\sqrt{\ve \theta }}x} \geq 0;
\end{eqnarray*}
and
\begin{eqnarray*}
L_{\ve,\mu} e^{-\frac{\sqrt{\gamma \alpha \theta}}{\sqrt{\ve }}(1-x)} = a(\frac{b}{a} - \frac{1}{a}\gamma \alpha \theta + \mu \sqrt{\frac{\gamma \alpha \theta}{\ve }})  e^{-\frac{\sqrt{\gamma \alpha \theta}}{\sqrt{\ve }}(1-x)}\\
\geq a(\frac{b}{a} - \gamma  \theta + \mu \sqrt{\frac{\gamma \alpha \theta}{\ve }} )  e^{-\frac{\sqrt{\gamma \alpha \theta}}{\sqrt{\ve }}(1-x)}
 \geq 0.
\end{eqnarray*}
The comparison principle then yields the pointwise bounds (\ref{exp-bound-w}a).

(iii) From the bounds (\ref{crude}) established in Lemma 1, we deduce the following derivative bounds on the singular components $w_L,w_R$. For $0\leq k \leq 5$,
\begin{equation}
\vert w_L \vert_k,  |w_{R}|_k \leq C \bigl(\sqrt{\frac{\theta}{\ve }} \bigr)^k.
\end{equation}
When $\theta >1$, we can derive sharper bounds on the derivatives of  $w_{L}$ by introducing the secondary decomposition
\begin{subequations}\label{sec-decomp}
\begin{eqnarray}
&&w_L= \sum _{i=0}^3 \ve ^{i} w_i, \quad \hbox{where} \quad L_\mu w_0=0, \ w_0(0)=1;\\ &&
 \quad \ve L_\mu w_i = (L_\mu-L_{\ve,\mu})w_{i-1}, \quad w_i(0)=0,  i= 1,2; \ \\
&&\hbox{and} \qquad \ve L_{\ve,\mu}w_3 = (L_\mu-L_{\ve,\mu})w_{2}, \qquad  w_3(0)=w_3(1)=0.
\end{eqnarray}\end{subequations}
Observe that $w_L(1)=w_0(1)+\ve w_1(1) +\ve ^2w_2(1)  \neq 0$.
From this expansion one can deduce that
\begin{equation*}
\vert w_L(1) \vert \leq e^{-\frac{\gamma}{\mu}}, \qquad
\vert w_{L}\vert _k\leq
C\mu^{-k}, \ 1\leq k \leq 5.
\end{equation*}
Hence, we have deduced that
\[
 |w_{L}|_k \leq C\bigl(\frac{1}{\sqrt{\ve \theta }}\bigr)^k,\quad 1\leq k \leq 5  .
\]
Our next  step in the proof is to deduce (\ref{exp-bound-w}b), which are pointwise exponential bounds on the derivatives of the layer component $w_R$. For those points within the right layer, we have that
\[
e^{-\frac{\sqrt{\gamma \alpha \theta}}{\sqrt{\ve }}(1-x)} \geq C, \ 1- \sqrt{\frac{\ve }{\theta }}\leq x \leq 1
\]
and so
\[
 |w_{R}(x)|_k \leq C\bigl(\sqrt{\frac{\theta}{\ve }} \bigr)^k\leq C\bigl(\sqrt{\frac{\theta}{\ve }} \bigr)^k e^{-\frac{\sqrt{\gamma \alpha \theta}}{\sqrt{\ve }}(1-x)}, \ 1- \sqrt{\frac{\ve }{\theta }}\leq x \leq 1.
 \]
 Now we return to the argument from Lemma \ref{apriori}. If $x < 1- \sqrt{\frac{\ve }{\theta }}$, construct a neighbourhood $N_x=(p-r,p)$ so that $x \in N_x$. Then there exists a $y \in N_x$ such that
 \[
\vert w_R'(y) \vert  \leq \frac{2 \Vert w_R \Vert _{N_x}}{r}.
\]
\begin{eqnarray*}
w_R'(x) &=&w_R'(y) + \int _{t=y}^x w_R'' \ d t \
=w_R'(y) + \frac{1}{\ve} \int _{t=y}^x \mu aw_R'+bw_R \ d t \\
&=&w_R'(y) + \frac{\mu}{\ve} ((aw_R)(x)-(aw_R)(y))  -\frac{1}{\ve}\int _{t=y}^x \mu a'w_R-bw_R \ d t. \end{eqnarray*}
Thus
\[
\vert w_R'(x) \vert \leq C(\frac{1}{r}+ \frac{\mu}{\ve}  +\frac{r}{\ve}) \Vert w_R \Vert _{N_x} \leq C(\frac{1}{r}+ \frac{\mu}{\ve}  +\frac{r}{\ve})e^{-\frac{\sqrt{\gamma \alpha \theta}}{\sqrt{\ve }}(1-x)}e^{\frac{\sqrt{\gamma \alpha \theta}}{\sqrt{\ve }}r} .
\]
By taking
\[
r = \sqrt{\frac{\ve}{2\theta}}
\]
we deduce
\[
\vert w_R'(x) \vert \leq C\sqrt{\frac{\theta}{\ve}}e^{-\frac{\sqrt{\gamma \alpha \theta}}{\sqrt{\ve }}(1-x)}.
\]
 From the differential equation that defines $w_R$, we have that
\[
\ve \vert w_R''(x)  \vert \leq C \sqrt{\theta \ve} \vert w_R'(x) \vert + C \vert w_R(x) \vert.
\]
which will establish the  bound on the second order derivative of $w_R$.
Use the bounds in Lemma 1, to establish the bounds (\ref{exp-bound-w}b) on the higher derivatives of $w_R(x)$.

(iv) To complete the proof, we  establish the bound (\ref{exp-bound-w}c). For the case of $\theta =1$, the above argument (used to establish (\ref{exp-bound-w}b)) can be repeated (with $1-x$ replaced by $x$). In the other case of $\theta >1$, we use the decomposition (\ref{sec-decomp}). Observe that for $0\leq k \leq 5$,
\[
\vert w_i(x) \vert _k \leq \frac{C}{\mu ^{2i+k}} e^{-\gamma x/\mu}, \qquad i=0,1,2
\]
and hence, using a maximum principle for the second order operator $L_{\ve,\mu}$ we have
\[
\vert w_3(x) \vert  \leq \frac{C}{\mu ^{6}} e^{-\frac{\gamma x}{2\mu}}.
\]
Now  repeat the argument used to establish (\ref{exp-bound-w}b) (taking $r=\mu$) to deduce that for $x > \mu$,
\[
\vert w'_3(x) \vert  \leq \frac{C}{\mu ^{7}} (1+\theta)e^{-\frac{\gamma x}{2\mu}} \quad \hbox{and} \quad \vert w''_3(x) \vert  \leq \frac{C}{\mu ^{8}} (1+\theta +\theta ^2)e^{-\frac{\gamma x}{2\mu}} .
\]
Hence, since we are in the case of $\ve \leq C \mu ^2$,
\begin{eqnarray*}
\vert w'_L(x) \vert  &\leq&\frac{C}{\mu }(1+ \frac{\ve}{\mu^2}  +\frac{\ve ^2}{\mu^4} + \frac{\ve ^3}{\mu^6}(1+\frac{\mu ^2}{\ve})) e^{-\frac{\gamma x}{2\mu}} \leq \frac{C}{\mu }e^{-\frac{\gamma x}{2\mu}}, \\ \vert w_L(x) \vert _k &\leq& \frac{C}{\mu ^k}e^{-\frac{\gamma x}{2\mu}},\quad k=2,3.
\end{eqnarray*}
Continuing this argument for the higher derivatives establishes (\ref{exp-bound-w}c) for $\theta >1$.
\end{proof}

\vskip0.25cm 

{\bf Appendix B. Proof of Lemma 8. }

\begin{proof}
Using the bounds (\ref{boundvf21}) on the derivatives of the regular component $v$, we have the truncation error bound
\[
|L^{N}(V-v)(x_i)|:=  \left\{
\begin{array}{lll}
C(\ve + \mu) N^{-1}, & \hbox{if } x_i=\sigma _L,1-\sigma_R, \\
C(\sqrt{\ve \theta} N^{-1} + \mu) N^{-1}&\hbox{otherwise}
\end{array} .
\right.\]
(i) Looking first to establish a  bound at the end-point $x = 0$, if $\theta>1$, consider the linear barrier function
\[B(x_i):=  C_1(\frac{\ve}{\mu} + 1) N^{-1}x_i. \]
 Observe that $L^N(B(x_i) \pm (V-v)(x_i)) \geq 0 $ for $C_1$ large enough. Applying the discrete minimum  principle and using $\theta = \frac{\alpha\mu^2}{\gamma \ve}>1$ we deduce that
\[|(V-v)(x_i)| \leq CN^{-1}(\frac{\ve}{\mu})x_i \leq CN^{-1}\mu x_i,
\]
yielding the bound
$
\vert D^+(V-v)(0) \vert \leq C\mu N^{-1}.
$

\noindent (ii) In the reaction-diffusion case (where $\theta = 1$) consider the barrier function
\[
B_2(x_i) := C_1(\sqrt{\ve}N^{-2}\ln N R_1(x_i))+C_2(N^{-2}\frac{\sqrt{\ve}}{\beta}+N^{-1}x_i),
\]
where the wedge function $R_1(x_i)$ is defined by
\begin{equation}\label{wedge}
R_1(x_i) :=  \left\{
\begin{array}{ll}
\frac{x_i}{\sigma_L}, & \hbox{if } \  x_i \leq \sigma_L, \\
1, & \hbox{if }\ \sigma_L< x_i  < 1-\sigma_R ,\\
\frac{1-x_i}{\sigma_R}, & \hbox{if }\   x_i \geq  1-\sigma_R.
\end{array}
\right.  
\end{equation}
We find that
\begin{equation}\label{lnr1}
L^{N} R_1(x_i) \geq  \left\{
\begin{array}{lll}
0, & \hbox{if } \ x_i < \sigma_L , \\
\frac{\ve N}{\sigma_L} +\frac{\mu \alpha }{\sigma _L}, & \hbox{if} \ x_i = \sigma_L\\
0, & \hbox{if }\ \sigma_L<x_i < \sigma_R , \\\frac{\ve N}{\sigma_R}, &\hbox{if}\ x_i = 1-\sigma_R, \\
-\frac{\mu a}{\sigma_R}, & \hbox{if }\ x_i > 1-\sigma_R .
\end{array}
\right.
\end{equation}
Since
\[
L^N(N^{-2}\frac{\sqrt{\ve}}{\beta}+N^{-1}x_i) \geq a\mu N^{-1} +N^{-2}\sqrt{\ve}+N^{-1}x_ib \geq CN^{-1}(\mu +N^{-1}\sqrt{\ve}),
\]
we see that
\[
L^{N} B_2(x_i) \geq  \left\{
\begin{array}{lll}
CN^{-1}(\mu +N^{-1}\sqrt{\ve}),  &\hbox{if } x_i < \sigma_L, \sigma_L<x_i < \sigma_R\\
CN^{-1}(\sqrt{\ve}N^{-1}\ln N (\frac{\ve N}{\sigma_L})+\mu ), & \hbox{if } x_i = \sigma_L\\
CN^{-1}(\sqrt{\ve}N^{-1}\ln N (\frac{\ve N}{\sigma_R})+\mu ), &\hbox{if } x_i = 1-\sigma_R, \\
C_1N^{-1}(\sqrt{\ve}N^{-1}\ln N (-\frac{\mu a}{\sigma_R}))+C_2N^{-1}(\mu +N^{-1}\sqrt{\ve}), & \hbox{if } x_i > 1-\sigma_R .
\end{array}
\right.
\]
Now when $\theta = 1$, for the bound at the transitions points, note that
\[\sqrt{\ve}N^{-2}\ln N (\frac{\ve N}{\sigma_L}) = \sqrt{\ve}N^{-2}\ln N (\frac{\ve N}{\sigma_R}) = \frac{N^{-1}\ve\sqrt{\gamma \alpha}}{4}.
\]
Also for $C_2$ sufficiently large, for the bound in the layer region near $x=1$,
\[
C_1(\sqrt{\ve}N^{-2}\ln N (-\frac{\mu a}{\sigma_R})+C_2(\mu N^{-1})\geq C_3\mu N^{-1}.
\]
We therefore have deduced that
\[|(V-v)(x_i)| \leq B_2(x_i) = C_1(\sqrt{\ve}N^{-2}\ln N)R_1(x_i)+C_2(N^{-2}\frac{\sqrt{\ve}}{\beta}+N^{-1}x_i).
\]
Using $\sigma_L = C\sqrt{\ve}\ln N$ we see that
\[|(V-v)(h_L)| \leq CN^{-1}(N^{-1}h_L+N^{-1}\sqrt{\ve}+h_L),
\]
which yields the bound
\[
\vert{ \cal V}^+_{0} \vert \leq CN^{-1}(N^{-1}+N^{-1}\frac{\sqrt{\ve}}{h_L}+1) \leq CN^{-1}.
\]
Hence, for both cases, we have established the bound at the left end-point $x=0$.

 (iii) For the other end of the interval with $x=1$, consider the case of  $\theta>1$ and   the barrier  function
\[
B_3(x_i):=  C_1(\frac{\ve}{\mu}+1)N^{-1}(x_i-1+\tilde \psi(x_i)),
\]
where the mesh function $\tilde \psi(x_i)$ satisfies
\begin{equation}\label{eqnpsi}
-\ve \delta^2 \tilde  \psi +C_{*}\sqrt{\ve\theta} D^- \tilde \psi=0, \ x_i\in (0,1), \
\tilde \psi(0)=1, \ \tilde \psi(1)=0; \ C_{*} := A\sqrt{\frac{\gamma}{\alpha}}.
\end{equation}
 Compare this barrier function to the barrier function used at the start of Lemma 6. 
Applying the discrete maximum principle and using $\theta >1$ it follows that
\begin{equation}\label{sharpVbnd}
|(V-v)(x_i)|\leq  CN^{-1}(x_i-1+\tilde \psi (x_i)).
\end{equation}
In order to use this to find a bound on $D^+(V-v)(1)$ we need to bound $D^{-}\tilde \psi(1)$. Defining $F_i := D^{-} \tilde \psi(x_i)$, using \eqref{eqnpsi} we see that
\[
-\ve\left(\frac{F_{i+1}-F_{i}}{\bar{h}_i}\right)+C_{*}\sqrt{\ve\theta}F_{i} =0, \quad x_i \in (0,1).
\]
Then
\begin{equation}\label{F}
F_{i} = \prod_{k = i}^{N-1}(1+C_*\sqrt{\ve\theta}\ve^{-1}\bar{h}_k)^{-1}F_N, \quad \hbox{for} \quad i<N;
\end{equation}
where the constant $F_N$ is to be determined.
By telescoping, we see that
\[
h_{L}\sum_{i = 1}^\frac{N}{4} F_i+H\sum_{i = \frac{N}{4}+1}^\frac{3N}{4} F_i+h_{R}\sum_{i = \frac{3N}{4}}^N F_i = \tilde \psi (1) -\tilde \psi (0) = -1;
\]
and from \eqref{F} it follows that
\begin{eqnarray*}
|F_{N}| &\leq& \frac{1}{h_R(\sum_{i = \frac{3N}{4}}^{N-1}(1+C_*\sqrt{\ve\theta} \ve^{-1}h_R)^{-(N-i)}+1)} \\
&\leq&
\frac{C_*\sqrt{\ve\theta}\ve^{-1}}{(1+C_*\sqrt{\ve\theta} \ve^{-1}h_R)(1-(1+\sqrt{\ve\theta} \ve^{-1}h_R)^{-\frac{N}{4}}))}.
\end{eqnarray*}
 For $N$ large enough we conclude that
\[
D^{-}\tilde \psi(1) = |F_{N}| \leq C\sqrt{\frac{\theta}{\ve}}.
\]
Using this bound and $(V-v)(1)=0$, we have established the bound
\begin{eqnarray*}
\vert D^-(V-v)(1)\vert &\le&C( N^{-1}(1+\vert D^- \tilde \psi(1)\vert))\leq CN^{-1}(1+\sqrt{\frac{\theta}{\ve}}).
\end{eqnarray*}
This yields the desired bound at $x=1$ in the convection-diffusion case where $\theta >1$.

(iv) For the reaction-diffusion case, where $\theta=1$ the argument is more complicated. Consider
\[
B_4(x_i) := C_1(\sqrt(\ve)N^{-2}\ln N R_1)+C_2(N^{-2}\frac{\sqrt{\ve}}{\beta}+N^{-1}(x_i-1+\tilde \psi(x_i))
\]
with $R_1, \tilde \psi$ are as defined previously in (\ref{wedge}) and (\ref{eqnpsi}) respectively. 
This fourth barrier function is a minor alteration to the barrier function $B_2(x_i)$. We can show
\[
L^{N}(N^{-2}\frac{\sqrt{\ve}}{\beta}+N^{-1}(x_i-1+\psi(x_i)) \geq CN^{-1}(\mu +N^{-1}\sqrt{\ve})
\]
and using \eqref{lnr1} we see
\[
L^{N} B_4(x_i) \geq  \left\{
\begin{array}{lll}
CN^{-1}(\mu +N^{-1}\sqrt{\ve}), & \hbox{if } x_i < \sigma_L,\ \sigma_L<x_i < \sigma_R  \\
CN^{-1}(\sqrt{\ve}N^{-1}\ln N (\frac{\ve N}{\sigma_L})+\mu ), & \hbox{if } x_i = \sigma_L, 1-\sigma_R\\
C_1(\sqrt{\ve}N^{-2}\ln N (-\frac{\mu a}{\sigma_R})+C_2N^{-1}(\mu +N^{-1}\sqrt{\ve}), & \hbox{if } x_i > 1-\sigma_R .
\end{array}
\right.
\]
As before, as $\theta = 1$,
\[\sqrt{\ve}N^{-2}\ln N (\frac{\ve N}{\sigma_L}) = \sqrt{\ve}N^{-2}\ln N (\frac{\ve N}{\sigma_R}) = \frac{N^{-1}\ve\sqrt{\gamma \alpha}}{4};
\]
and also for $C_2$ sufficiently large
\[
C_1(\sqrt{\ve}N^{-2}\ln N (-\frac{\mu a}{\sigma_R})+C_2(\mu N^{-1})\geq C_3\mu N^{-1}.
\]
We therefore have
\[
L^{N} B_4(x_i) \geq  \left\{
\begin{array}{lll}
CN^{-1}(\mu +N^{-1}\sqrt{\ve}), & \hbox{if }\  x_i < \sigma_L, \ \sigma_L<x_i < \sigma_R,\\
CN^{-1}(\mu +{\ve}), & \hbox{if}\ x_i = \sigma_L,1-\sigma_R\\
C_1N^{-1}\sqrt{\ve}N^{-1}\ln N (-\frac{\mu a}{\sigma_R})+C_2N^{-1}(\mu +N^{-1}\sqrt{\ve}), & \hbox{if }\  x_i > 1-\sigma_R .
\end{array}
\right.
\]
Using the discrete maximum principle we deduce that
\[|(V-v)(x_i)| \leq C_1(\sqrt{\ve} N^{-2}\ln N R_1)+C_2(N^{-2}\frac{\sqrt{\ve}}{\beta}+N^{-1}(x_i-1+\tilde \psi(x_i)),
\]
which yields the bound
\[\vert {\cal V}^-_{N}\vert \le C_1(\frac{\sqrt{\ve} N^{-2}\ln N (\frac{h_R}{\sigma_R})}{h_R})+C_2(\frac{N^{-2}\sqrt{\ve}}{\beta h_R}+\frac{N^{-1}h_R}{h_R}+N^{-1}\vert D^- \tilde \psi(1)\vert)).
\]
Simplifying we have
\[
\vert D^-(V-v)(1)\vert \le CN^{-1}(1+\sqrt{\frac{\theta}{\ve}})
\]
and this completes the proof.
\end{proof}

\vskip0.25cm 

{\bf Appendix C. Proof of Theorem 9.}

\begin{proof}
 (i) At the interior points, using the truncation error bounds (\ref{trunc_x}), we can establish that
\[
\vert \hat L^{N} D^-(V-v)(x_i)\vert  \leq \left\{
\begin{array}{lllll}
C N^{-1}, & \hbox{if } x_i\ne \sigma_L+H, \sigma _L, 1-\sigma _R, 1-\sigma_R+h_R,
\\
 C(\frac{\sqrt{\ve}}{\sqrt{\theta}\ln N}+N^{-1}), & \hbox{if } x_i=\sigma _L, \\
 C(\ve+\mu+N^{-1}),  & \hbox{if } x_i=\sigma_L+H, \\
 C(\ve+N^{-1}), & \hbox{if } x_i=1-\sigma _R, \\
 C(\frac{\sqrt{\ve\theta}+ \sqrt{\frac{\theta\mu^2}{\ve}}}{\ln N}+N^{-1}),  & \hbox{if } x_i=1-\sigma _R +h_R,
\end{array} \right.
\]
We next define a combination of barrier functions, which allow us establish a bound on \[
\sqrt{\frac{\ve}{\theta}} \vert D^-(V-v)(x_i)\vert.\] This initial set of barrier functions are linear and step functions. In order to establish  the sharper bounds on  $\vert D^-(V-v)(x_i)\vert $ these barrier functions are replaced by discrete exponential
barrier functions. Define the following ramp functions
\[R_2(x_i) :=  \left\{
\begin{array}{ll}
\frac{x_i}{\sigma_L}, & \hbox{if } x_i \leq \sigma_L, \\
1, & \hbox{if }   \sigma_L<x_i\leq 1
\end{array}
\right.  \quad R_3(x_i) :=  \left\{
\begin{array}{ll}
\frac{x_i}{1-\sigma_R}, & \hbox{if }  x_i \leq  1-\sigma_R.\\
1, & \hbox{if } 1-\sigma_R< x_i  \leq 1\\
\end{array}
\right. \]
and step functions
\[S_1(x_i) :=  \left\{
\begin{array}{ll}
0, & \hbox{if } x_i \leq \sigma_L, \\
1, & \hbox{if }   \sigma_L<x_i\leq 1
\end{array}
\right.  \quad S_2(x_i) :=  \left\{
\begin{array}{ll}
0, & \hbox{if }  x_i \leq  1-\sigma_R.\\
1, & \hbox{if } 1-\sigma_R< x_i  \leq 1\\
\end{array}
\right.  \]
We find that
\begin{eqnarray*}
\hat{L}^{N} R_2(x_i) &\geq&  \left\{
\begin{array}{lll}
\frac{\mu\alpha}{\sigma_L}, & \hbox{if } x_i < \sigma_L , \\
\frac{N\gamma\alpha}{64\theta (\ln N)^2}+\frac{\mu\alpha}{\sigma_L}, & \hbox{if } x_i = \sigma_L\\
b+\mu D^{-}a  , & \hbox{if } x_i > \sigma_L , \\
\end{array}
\right.
\\ 
\hat{L}^{N} R_3(x_i) &\geq&  \left\{
\begin{array}{lll}
\mu\alpha, & \hbox{if } x_i < \sigma_L, \sigma_L<x_i < 1-\sigma_R  \\
-\frac{\ve}{(h_l+H)h_l}\left(\frac{H-h_L}{1-\sigma_R}\right), & \hbox{if } x_i = \sigma_L\\
\frac{\ve N}{2}+\mu\alpha, &\hbox{if } x_i = 1-\sigma_R, \\
b+\mu D^{-1}a, & \hbox{if } x_i > 1-\sigma_R .
\end{array}
\right.
\\
\hat{L}^{N} S_1(x_i) &\geq&  \left\{
\begin{array}{lll}
0, & \hbox{if } x_i < \sigma_L , \\
\frac{-N^2\gamma\alpha}{16\theta(\ln N)^2} , & \hbox{if } x_i = \sigma_L\\
\frac{\ve N^2}{4}+\frac{\mu\alpha}{H}, & \hbox{if } x_i = \sigma_L+H , \\
b+\mu D^{-}a, & \hbox{if } x_i > \sigma_L+H .
\end{array}
\right.
\\
\hat{L}^{N} S_2(x_i) &\geq&  \left\{
\begin{array}{lll}
0, & \hbox{if } x_i < 1-\sigma_R , \\
\frac{-2\ve}{(H+h_r)h_r}, & \hbox{if } x_i = 1-\sigma_R\\
\frac{\sqrt{\ve\theta}N^{2}}{8\ln N}, & \hbox{if } x_i =1 -\sigma_R+h_r , \\
b+\mu D^{-}a, & \hbox{if } x_i > 1-\sigma_R .
\end{array}
\right.
\end{eqnarray*}

Consider the barrier function
\[ 
\begin{split}
B_4(x_i) =N^{-1}\bigl( C_1\sqrt{\ve\theta}\ln N R_2+C_2(1+\frac{\mu}{\ve})(S_1N^{-1}+4R_2)+C_3(R_2+R_3)\\+C_4(1+\frac{\mu}{\ve})(N^{-1}S_2+2(R_2+R_3))+C_5\bigr).\end{split}
\] 
We find that $\hat{L}^N(B_4\pm {\mathscr V}^-_{i}) \geq 0$ and applying the maximum principle we get derivative bounds with scaling everywhere.
That is, we have established the error bound
\[
\sqrt{\frac{\ve}{\theta}}\vert D^-(V-v)(x_i) \vert \leq C N^{-1} \ln N, \quad x_i \in (0,1].
\]
We now proceed to improve on this error bound.

(ii)
Consider first the case of $\theta = 1$. Instead of using barrier functions involving ramps to deal with the truncation error at $x_i= \sigma_L+H$ and $x_i = 1-\sigma_r+h_r$ we define the following two mesh functions
\begin{eqnarray*}
Z_L(x_i) &:=&  \left\{
\begin{array}{ll}
(1+\rho _L H)^{-1}(1+\rho _L h_L)^{i-\frac{N}{4}-1}, & \hbox{if }  x_i \leq \sigma_L, \\
1, & \hbox{if}\  \sigma_L < x_i \leq 1
\end{array}
\right.\\ \\
Z_R(x_i) &:=&  \left\{
\begin{array}{ll}
1, & \hbox{if }  x_i \leq 1-\sigma_R\\
(1+0.5\rho _R h_R)^{\frac{3N}{4}+1-i}, & \hbox{if  }  x_i \geq 1-\sigma_R+h_R, \\
\end{array}
\right.
\end{eqnarray*}\vspace{-0.5cm}

\noindent Remembering that we are in the case where $\theta = 1$, we have
\[
\hat L^{N} Z_L(x_i) \geq  \left\{
\begin{array}{llll}
0, & \hbox{if }  x_i < \sigma_L\\
\frac{-\gamma\alpha N}{16\ln N}, & \hbox{if }  x_i = \sigma_L, \\
\frac{\mu\alpha N}{4} , & \hbox{if } x_i = \sigma_L+H, \\
b, & \hbox{if } x_i > \sigma _L+H.
\end{array}
\right.
\]
and
\[
\hat L^{N} Z_R(x_i) \geq  \left\{
\begin{array}{llll}
b, & \hbox{if }  x_i <1-\sigma_R+h_R\\
\frac{\alpha\gamma N}{128\ln N}, & \hbox{if }  x_i = 1-\sigma_R+h_R, \\
0, & \hbox{if } x_i > 1-\sigma_R+h_r.
\end{array}
\right.
\]
Consider the barrier function
\[ 
\begin{split}B_5(x_i) = N^{-1}(C_1\sqrt{\ve}\ln NR_2 + C_2(Z_L+\ln N R_2)+C_3(N^{-1}S_1+\frac{R_2}{4})\\
+C_4(R_2+R_3)+C_5\left(\sqrt{\ve}+\frac{\mu}{\sqrt{\ve}}\right)Z_R+C_6)\end{split}
\] 
and use the maximum principle to deduce that
\begin{equation}\label{react-diff-smooth}
\vert D^- (V-v)(x_i) \vert \le C N^{-1} (\ln N),\quad  \hbox{if } 0 < x_i \leq 1; \quad \hbox{and} \ \theta= 1.
\end{equation}

(iii) Next consider the case of $\theta >1$. Define the following three mesh functions;
\begin{eqnarray*}
P(x_i) &:=&  \left\{
\begin{array}{llll}
0
, & \hbox{if }  x_i \leq \sigma _L \\
(1+ 0.5\rho_R H)^{i-\frac{3N}{4}}, & \hbox{if } \sigma _L+H\leq x_i \leq 1-\sigma _R, \\
 (1+\frac{\rho _Rh_R}{32})^{i-3N/4-1}, & \hbox{if } 1-\sigma _R+h_R\leq x_i \leq 1.
\end{array}
\right. \\ \\ \\
Q(x_i) &:=&  \left\{
\begin{array}{ll}
0, & \hbox{if }  x_i \leq 1-\sigma _R, \\
(1+\frac{\rho h_R}{32})^{i-3N/4-1}, & \hbox{if }\quad  1-\sigma _R+h_R\leq x_i \leq 1.
\end{array}
\right. \\ \\ \\
\hat{Z}_{L}(x_i) &:=&  \left\{
\begin{array}{ll}
(1+\frac{\rho _R H}{4})^{-1}(1+0.5 \rho_R h_L)^{i-\frac{N}{4}-1}, & \hbox{if }\quad   x_i \leq \sigma_L, \\
1, & \hbox{if} \quad  \sigma_L < x_i \leq 1
\end{array} .
\right. 
\end{eqnarray*}
%
Observe that
\[
\hat L^{N} P(x_i) \geq  \left\{
\begin{array}{llll}
0, & \hbox{if }  x_i < \sigma_L  , \\
-\frac{\ve N}{4\mu \ln N} , & \hbox{if } x_i = \sigma _L, \\
0 & \hbox{if } \sigma_L+H\leq x_i < 1-\sigma _R \\
\frac{\mu\alpha N}{4} & \hbox{if } x_i = 1-\sigma _R \\
-\frac{\mu^2\alpha^2N}{512\ve\ln N} & \hbox{if } x_i = 1-\sigma _R+h_r \\
bP(x_i), & \hbox{if } x_i > 1-\sigma _R+h_R,
\end{array}
\right.
\]
and
\begin{eqnarray*}
\hat L^{N} Q(x_i) \geq  \left\{
\begin{array}{llll}
0, & \hbox{if }  x_i < 1-\sigma _R, \\
-2\ve N^2 , & \hbox{if } x_i = 1-\sigma _R, \\
 \frac{\mu\alpha N^2}{32\ln N}, & \hbox{if } x_i = 1-\sigma _R+h_R, \\
bQ(x_i), & \hbox{if } x_i > 1-\sigma _R+h_R.
\end{array}
\right.
\\
\hat L^{N} \hat{Z}_L(x_i) \geq  \left\{
\begin{array}{llll}
0, & \hbox{if }  x_i \leq \sigma_L, \\
\frac{\mu\alpha N}{8} , & \hbox{if } x_i = \sigma_L+H, \\
0, & \hbox{if } x_i > \sigma _L+H.
\end{array}
\right.
\end{eqnarray*}
Considering the linear combination
\[
\Psi(x_i)=N^{-1} \bigl( \left(1+\frac{\mu}{\ve}\right)N^{-1}Q+\frac{8}{\alpha}\left(\frac{\ve}{\mu}+1\right)P+\frac{128}{\gamma^2\alpha}(\mu+\ve)\ln N R_2\bigr),
\]
we see that
\[
\hat L^{N} (\Psi(x_i)) \geq  \left\{
\begin{array}{lll}
0 , & \hbox{if } x_i \neq 1-\sigma _R+h_R, \\
\frac{C}{\ln N}\left(\mu+\frac{\mu^2}{\ve}\right), & \hbox{if } x_i = 1-\sigma _R+h_R.
\end{array}
\right.
\]
Use the barrier function
\[
B_6(x_i)=N^{-1} \bigl( C_1\mu \ln N R_2+C_2\left(\frac{\ve}{\mu}+1\right)\hat{Z}_L+ C_3(R_2+R_3) + C_4N\Psi+C_5\bigr).
\]
to derive the bound
\[
|(D^- (V-v)(x_i))|\leq N^{-1} \bigl(C_1\mu \ln N R_2+C_2\left(\frac{\ve}{\mu}+1\right)\hat{Z}_L+ C_3(R_2+R_3) + C_4N\Psi+C_5\bigr).
\]
If $x_i\leq 1-\sigma_R$ we  have established the bound $
|(D^- (V-v)(x_i))|\leq CN^{-1};
$
and we  have removed all  scaling outside the computational layer region on the right.
\end{proof}

\end{document}